\newtheorem{definition}{Definition}[section]
\newtheorem{remark}{Remark}
\newtheorem{theorem}{Theorem}
\newtheorem{lemma}{Lemma}
\newtheorem{corollary}{Corollary}
\newtheorem{proposition}{Proposition}
\newtheorem{conjecture}{Conjecture}
\newtheorem{example}{Example}
\newtheorem*{notation}{Notation}
\newtheorem{fact}{Fact}
\newcommand{\Z}{\mathbb{Z}}   \newcommand{\bbZ}{\mathbb{Z}}
\newcommand{\Q}{\mathbb{Q}}   
\newcommand{\Id}{\mathrm{Id}}
\newcommand{\Mod}{\mathrm{\textbf{Mod}}}
\newcommand{\C}{\mathbb{C}}
\newcommand{\bbC}{\mathbb{C}}
\newcommand{\cZ}{\mathcal{Z}}
\newcommand{\cB}{\mathcal{B}}
\newcommand{\cW}{\mathcal{W}}
\newcommand{\cV}{\mathcal{V}}
\newcommand{\cA}{\mathcal{A}}
\newcommand{\op}{\mathrm{op}}  \newcommand{\rev}{\mathrm{rev}}
\newcommand{\loc}{\mathrm{loc}}    \newcommand{\coop}{\mathrm{co-op}}
\newcommand{\Vect}{\mathrm{\textbf{Vec}}}
\def\sdprod{{\times\!\vrule height5pt depth0pt width0.4pt\,}}
\numberwithin{equation}{section} 
\numberwithin{theorem}{section}
\numberwithin{lemma}{section}
\numberwithin{corollary}{section}
\numberwithin{proposition}{section}
\title{Orbifolds of Pointed Vertex Operator Algebras I}
\author{
{\sc Terry Gannon} \\
 {\footnotesize Department of Mathematics, University of Alberta,}\\
{\footnotesize Edmonton, Alberta, Canada T6G 2G1}\\
{\footnotesize e-mail: {\tt tjgannon@ualberta.ca}} \\ \\
{\sc Andrew Riesen} \\
{\footnotesize Department of Mathematics, Massachusetts Institute of Technology,}\\ 
{\footnotesize Cambridge, MA 02139, USA}\\
{\footnotesize e-mail: {\tt a\_riesen@mit.edu}}}
\date{\today}
\begin{document}
\maketitle
\begin{abstract} By a \textit{pointed} vertex operator algebra (VOA) we mean one whose modules are all simple currents (i.e.\ invertible), e.g.\ lattice VOAs. This paper systematically explores the interplay between their orbifolds and tensor category theory. We begin by supplying an elementary proof of the Dijkgraaf--Witten conjecture, which predicts the representation theory of  holomorphic VOA orbifolds. We then apply that argument more generally to the situation where the automorphism subgroup fixes all VOA modules, and relate the result to recent work of Mason--Ng and Naidu. Here our results are complete. We then turn to the other extreme, where the automorphisms act fixed-point freely on the modules, and realize any possible nilpotent group as lattice VOA automorphisms. This affords a considerable generalization of  the Tambara-Yamagami categories. We conclude by  considering some hybrid actions. In this way we use tensor category theory to organize and generalize systematically several isolated examples and special cases scattered in the literature.   Conversely, we show how VOA orbifolds can be used to construct broad classes of braided crossed fusion categories and modular tensor categories.\end{abstract} 
\section{Introduction}


This paper aims to contribute to the  rich interplay between the theory of tensor categories (especially fusion categories and modular tensor categories), and the theory of vertex operator algebras (VOAs) (see e.g.\ \cite{Hu,KO}). This interplay provides  important and effective  tools in  both directions. 

A VOA  $\cV$ or a local conformal net  $\cB$ of factors on $S^1$ are both mathematical interpretations of a conformal field theory. In this paper we use the language of VOAs, since it is more familiar, but the theory of  conformal nets is more developed in some ways. The two theories are expected to be equivalent (see e.g.\ \cite{CKLW}). 

A VOA $\cV$ is called \textit{strongly rational} if its representation theory is semi-simple and finite. 
Let $G$ be a finite group of automorphisms of such a $\cV$. The fixed points $\cV^G$  are sometimes called the \textit{orbifold} of $\cV$ by $G$ (though orbifold has a different meaning in the physics literature). Taking the orbifold is one of the few general constructions we have of VOAs. Given a strongly rational $\cV$, it is conjectured (and known when $G$ is solvable) that $\cV^G$ will also be strongly rational (this is a theorem for conformal nets for all $G$ \cite{Xu}). In this paper we assume that conjecture throughout, but in examples this is rarely necessary, and everything we do can be reinterpreted in terms of conformal nets, where our results are unconditional.

This paper concerns the representation theory of the  orbifolds $\cV^G$ of strongly rational VOAs $\cV$ by finite groups $G$. We focus on the case where $\cV$ is \textit{pointed}, i.e.\ all $\cV$-modules are invertible (i.e.\ simple currents), which includes all lattice VOAs (though of course $\cV^G$ will usually have noninvertible modules). Orbifold technology is much more developed for lattice VOAs, but from the tensor category perspective there is no difference between lattice and pointed VOAs.

 In practise the most important things are the associated algebraic combinatorics. In particular, 
we want to list the simple modules of the orbifold $\cV^G$, and how their tensor (fusion) products decompose into simples. We  want to list the $G$-twisted $\cV$-modules, their fusion products, and their restrictions  to $\cV^G$-modules. We want the modular data (a matrix representation of SL$_2(\Z)$) of $\cV^G$, as this gives us its fusion decompositions, conformal weights of modules (mod 1), and modular transformations of the characters (graded dimensions). 

As we know from finite group representation theory, the complete story requires more than just the algebraic combinatorics (which for finite groups is captured by the character table). The complete story is conveniently captured by tensor categories. To give a familiar example, the dihedral group $D_4$ and the quaternions $Q_8$ have the same character table, and therefore the same  algebraic combinatorics, but their (braided fusion) categories of representations are inequivalent. More generally, the symmetric fusion category \textbf{Rep}$\,\,G$ has $G$-modules as objects and intertwiners as morphisms, and braiding $M\otimes N\to N\otimes M$ given by $u\otimes v\mapsto v\otimes u$; if the categories \textbf{Rep}$\,\,G$ and \textbf{Rep}$\,\,H$  are braided tensor equivalent, then $G$ and $H$  are isomorphic as groups.

The category \textbf{Mod}$\,\,\cV$ of modules of a VOA is defined similarly. When $\cV$ is strongly rational, \textbf{Mod}$\,\,\cV$ is of a very special type called a \textit{modular tensor category} (MTC). These are fusion categories with braidings, i.e. distinguished isomorphisms $c_{M,N}:M\otimes N\to N\otimes M$, coming from the skew-symmetry of VOA intertwiners. The tensor unit of  \textbf{Mod}$\,\,\cV$ is $\cV$ itself. The category \textbf{TwMod}$_G\,\cV$ of $G$-twisted $\cV$-modules is a braided $G$-crossed extension of \textbf{Mod}$\,\,\cV$. \textbf{Mod}$\,\,\cV$ is a full subcategory of \textbf{TwMod}$_G\,\cV$, consisting of the twisted modules with trivial grading, i.e.\ the ordinary $\cV$-modules, also called local or dyslectic. Then the MTC \textbf{Mod}$\,\,\cV^G$ is obtained from \textbf{TwMod}$_G\,\cV$ by a process called \textit{equivariantization}. (De-)equivariantization is one the few general constructions we have of tensor categories. 

Restriction along $\cV\supset\cV^G$ is a functor \textbf{Mod}$\,\,\cV\to\mathrm{\textbf{Mod}}\,\,\cV^G$, or better \textbf{TwMod}$_G\,\cV\to\mathrm{\textbf{Mod}}\,\,\cV^G$. The restriction $B=\mathrm{Res}(\cV)$ of the tensor unit can be interpreted as the algebra of functions $G\to\C$, and as such is a copy of the regular representation of $G$ with the structure of a commutative algebra. This algebra is the key to recovering \textbf{TwMod}$_G\,\cV$ and \textbf{Mod}$\,\,\cV$ from \textbf{Mod}$\,\,\cV^G$. In particular, \textbf{TwMod}$_G\,\cV$ can be interpreted as the $B$-modules in \textbf{Mod}$\,\,\cV$, and this restriction has an adjoint, the tensor functor \textit{induction} \textbf{Mod}$\,\,\cV^G\to\mathrm{\textbf{TwMod}}_G\,\cV$. 

This paper is the first in a series where we investigate the orbifolds of  VOAs.  Far from abstract nonsense, these categorical considerations provide effective tools in understanding the orbifold construction, as we will see. Tensor categories are considerably simpler, but preserve most of the salient features.  VOAs can return the favour by explicitly constructing tensor categories whose existence may not otherwise be clear (e.g. when the cohomological obstructions to their existence lie in nontrivial groups).

The most complete story, reviewed in Section 3, is when $\cV$ is \textit{holomorphic}, i.e.\
 has a trivial representation theory. Examples are lattice theories $\cV_L$ when $L$ is even, self-dual and positive definite, as well as the Moonshine module $\cV^\natural$.  
  Let $G$ be a finite group of automorphisms of such a $\cV$. The resulting tensor categories $\mathbf{Mod}\,\,\cV^G$ and $\mathbf{TwMod}_G\,\cV$ depend subtly on how $G$ acts on $\cV$, but there is only a finite  range of possibilities. Dijkgraaf--Witten \cite{DW} conjectured that the possibilities are parametrized by $[\omega]\in H^3(G,\bbC^\times)$. In particular, the $g$-twisted modules (for $g\in G$) form the   category \textbf{Vec}$_G^\omega$ of $G$-graded vector spaces (with associativity twisted by $\omega$), and the $\cV^G$-modules form the category of representations of the twisted quantum double $D^\omega(G)$. All this is now a theorem \cite{DNR} (we supply our own proof in Theorem \ref{doubleVOA}). From this the modular data for $\cV^G$ can be computed, and the restrictions and inductions between $\cV^G$-modules and $g$-twisted modules can be written down. We now know that any 3-cocycle $\omega$  (sometimes called the gauge anomaly) for any $G$ can be realized by a holomorphic VOA orbifold (Theorem \ref{dw}).
  
 Our first main task is to  generalize that story to any pointed VOA $\cV$, where $G$ fixes all $\cV$-modules (up to equivalence). We obtain that full generalization in Section 4 and the Appendix, identifying the categories of twisted $\cV$-modules and of $\cV^G$-modules, the modular data of $\cV^G$, the quantum groups corresponding to $\cV^G$ (namely the quasi-Hopf algebras of Naidu \cite{Naidu} and Mason--Ng \cite{MN}), and concrete VOA orbifolds realizing all abstract categorical possibilities.
 
 Next, in Section 5 we turn to the other extreme, where $G$ permutes all $\cV$-modules without any fixed points. When $G=\Z/2$ this recovers the Tambara-Yamagami categories (e.g.\ this happens with $\cV_L^+$ for lattices $L$ with $|L^*/L|$ is odd), but we are more interested in $|G|>2$ (e.g.\  the $\Z/3$ orbifold of the $D_4$ root lattice, where $\Z/3$ acts by triality). We focus on nilpotent $G$ for concreteness, constructing for each such $G$ a tower of generalized Tambara-Yamagami categories and their equivariantizations, which we realize as VOA orbifolds. Finally, in Section 6  we consider some hybrid cases, where $G$ possesses both fixed points and fixed-point free orbits.

The underlying picture is provided by the following figure, relating the MTCs of $\cV$- and $\cV^G$-modules, and the category of twisted modules. The latter is a braided $G$-crossed extension of the $\cV$-modules. The $H^3(G,\bbC^\times)$ ambiguity observed in the holomorphic orbifolds extends to  the general picture. In the tensor category picture  \cite{ENO}, there are obstructions $o_3,o_4$ to the existence of the braided $G$-crossed extensions, and this plays a large role in our story; having a VOA realization of the $G$-action means those obstructions must vanish.
$$\begin{tikzpicture}
\node (a) at (0,0) 
{\textbf{Mod}$\,\,\cV$}; \node (b) at (4,0) {\textbf{TwMod}$_G\,\cV$}; \node (c) at (0,-2) {$B\in\mathrm{\textbf{Mod}}\,\,\cV^G$}; \path[right hook->] (a) edge (b);   
 \path[<->] (a) edge (c);
  \path[->,font=\scriptsize] ([yshift=3pt]c.east) edge node [above] {Ind$\ \ \ \ \ \ \ \ \ $} ([yshift=3pt]b.south);   \path[<-,font=\scriptsize] ([yshift=-3pt]c.east) edge node [below] {\ \ \ \ \ Res} ([yshift=-3pt]b.south);
\end{tikzpicture}$$
\textbf{Figure 1: The Underlying Picture.} $G$ is a group of $\cV$-automorphisms, and $\cV^G$ is the orbifold. Res is the restriction functor of twisted $\cV$-modules to ordinary $\cV^G$-modules, and Ind is the induction tensor functor.  \textbf{Mod}$\,\,\cV$ and \textbf{Mod}$\,\,\cV^G$ are MTC, and \textbf{TwMod}$_G\,\cV$ is a braided $G$-crossed fusion category; \textbf{Mod}$\,\,\cV$ is a subcategory of \textbf{TwMod}$_G\,\cV$. The restriction of $\cV$ to \textbf{Mod}$\,\,\cV^G$ is a copy $B$ of the regular representation of $G$, and is a commutative algebra object in \textbf{Mod}$\,\,\cV^G$. \textbf{TwMod}$_G\,\cV$ is the category of $B$-modules, and \textbf{Mod}$\,\,\cV$ are the local  ones. 
\section{Background}

\subsection{Tensor categories}

For the standard introduction to tensor categories, see \cite{book}; for a pedagogical introduction see \cite{Wal}. For someone more comfortable with VOAs, it may be better to start with the next subsection and refer back to this one as needed.
 
A \textit{fusion category} $\mathbf{C}$ is a semisimple category with direct sums $x\oplus y$, tensor products (called fusions) $x\otimes y$, a tensor unit 1, duals $x^*$,  finitely many simples (up to equivalence), and finite-dimensional Hom spaces (for us, over $\bbC$). For example, associativity will be an explicit invertible map in Hom$_{\mathbf{C}}((x\otimes y)\otimes z,x\otimes(y\otimes z))$ for each choice of objects $x,y,z$, and those maps must satisfy certain coherence properties. A simple example is $\mathbf{Vec}_G$ for a finite group $G$, whose objects are finite-dimensional $G$-graded vector spaces; its Grothendieck ring is the group ring $\bbZ[G]$. Another example is the category $\mathbf{Rep}\,\,G$ of finite-dimensional representations. 

Given  a fusion category $\mathbf{C}$,  the \textit{Frobenius-Perron dimension} is the unique assignment of positive real numbers FPdim$(x)$ to each $x\in\mathbf{C}$ such that $$\mathrm{FPdim}(x\oplus y)=\mathrm{FPdim}(x)+\mathrm{FPdim}(y)\ \ ,\ \ \mathrm{FPdim}(x\otimes y)=\mathrm{FPdim}(x)\,\mathrm{FPdim}(y)\ \ \forall x,y\in\mathbf{C}$$
For example, any simple in $\mathbf{Vec}_G^\omega$ has FPdim 1, whereas for $\mathbf{Rep}\,\,G$ any $\rho$ has FPdim equal to its usual dimension. We write FPdim$(\mathbf{C}):=\sum_x\mathrm{FPdim}(x)^2$, where the sum is over all  isomorphism classes of simples in $\mathbf{C}$.

We say a fusion category is \textit{graded} by a group $G$ if we can write $\mathbf{C}=\oplus_{g\in G}\mathbf{C}_g$ such that $\mathbf{C}_g\otimes\mathbf{C}_h\subseteq\mathbf{C}_{gh}$ for all $g,h\in G$. It is straightforward to verify that the associativity isomorphisms of any fusion category $\mathbf{C}$ with a  $G$-grading  can be twisted by any cocycle $\omega\in Z^3(G,\bbC^\times)$, resulting in a potentially inequivalent fusion category. In this way one constructs $\mathbf{Vec}_G^\omega$.

In general, $x\otimes y\not\cong y\otimes x$ in a fusion category. By a \textit{braided} fusion category $\mathbf{C}$ we mean one with a choice of invertible map (called a \textit{braiding}) $c_{x,y}\in\mathrm{ Hom}_{\mathbf{C}}(x\otimes y,y\otimes x)$ for each choice of objects $x,y$, and those maps must satisfy certain coherence conditions. For example, although most $\mathbf{Vec}_G^\omega$ are not braided, $\mathbf{Rep}\,\,G$ has a braiding sending $u\otimes v\mapsto v\otimes u$ on the underlying spaces. Replacing a braiding  with its inverse $c^{\mathrm{rev}}_{y,x}=(c_{x,y})^{-1}$ $\forall x,y$ results in a potentially inequivalent braided fusion category called the \textit{reverse} $\mathbf{C}^{\mathrm{rev}}$. 

Let $\mathbf{C}$ be a braided fusion category, and $\mathbf{D}$  a fusion subcategory. By the \textit{M\"uger centralizer} we mean the full fusion subcategory with objects:
\begin{equation}
\mathcal{D}'=\{x\in \mathbf{C}:c_{y,x}\circ c_{x,y}=\Id_{x\otimes y}\ \ \forall y\in \mathbf{D}\}
\end{equation}
For example, $(\mathbf{Rep}\,\,G)'=\mathbf{Rep}\,\,G$. We say a braided fusion category $\mathbf{C}$ is \textit{nondegenerately} braided if it is centreless in the sense that $\mathbf{C}'\cong\mathbf{Vec}$. A \textit{modular tensor category} (MTC) is a nondegenerately braided fusion category with 
a \textit{ribbon structure} $\theta$ (see Definition 8.10.1 in \cite{book}). The ribbon structure makes it possible to define a categorical trace Tr$(f)$ of endomorphisms $f\in\mathrm{End}_{\mathbf{C}}(x)$ and categorical dimension dim$(x)$ of objects $x\in\mathbf{C}$.  As will be  explained in Section 2.3, all categories in this paper are \textit{pseudo-unitary}, i.e.\ dim$(x)=\pm\mathrm{FPdim}(x)$ $\forall x\in\mathbf{C}$.

Given a MTC $\mathbf{C}$, the different ribbon structures  on $\mathbf{C}$ are in bijection with the invertibles in $\mathbf{C}$. But when $\mathbf{C}$ is {pseudo-unitary}, there is a unique ribbon structure with dim$(x)=\mathrm{FPdim}(x)$ $\forall x$.  We can and will assume this preferred ribbon structure is chosen. It is also preferred by the associated VOAs.

Given any MTC, the \textit{modular data} is a matrix representation of the modular group SL$_2(\bbZ)$, with indices labelled by equivalence classes of simples of $\mathbf{C}$: $\left({0\atop 1}{-1\atop 0}\right)\mapsto S$ and   $\left({1\atop 0}{1\atop 1}\right)\mapsto T$, where $S_{x,y}$ is (up to a scalar independent of $x,y$) Tr$(c_{y,x}\circ c_{x,y})$ and $T$ is a diagonal matrix with entries  (up to a scalar independent of $x$) the ribbon twist $\theta(x)$. Then $x\otimes y\cong\oplus_zN_{x,y}^zz$ where the fusion multiplicities are given by \textit{Verlinde's formula} \begin{equation}\label{verl}N_{x,y}^z=\sum_{w}\frac{S_{x,w}S_{y,w}\overline{S_{z,w}}}{S_{1,w}}\end{equation}
where 1 denotes the tensor unit. The scalar normalizing $S$ is unique up to a sign, but we should always choose the sign so that $S$ has a strictly positive row (always possible, and the $S$ matrix for any VOA will also have that property). The scalar for $T$ is uniquely defined up to a third root of 1, but there is no preference for one of these over another.

Given any fusion category $\mathbf{C}$, its \textit{centre} or \textit{quantum double} $\cZ(\mathbf{C})$ is a MTC naturally associated with $\mathbf{C}$, which will play a large role in this paper. Its objects consists of pairs $(x,\beta)$ where $x$ is a (usually nonsimple) object of $\mathbf{C}$ with the property that $x\otimes y\cong y\otimes x$ $\forall y\in\mathbf{C}$ and the half-braiding  $\beta$ is (among other things) a choice of isomorphisms $\beta_y\in\mathrm{Hom}_{\mathbf{C}}(x\otimes y,y\otimes x)$ $\forall y$. When $\mathbf{C}$ is already nondegenerately braided, then $\cZ(\mathbf{C})\cong \mathbf{C}\boxtimes\mathbf{C}^{\mathrm{rev}}$.

Let $\mathbf{C}$ be a fusion category. An \textit{algebra} $B$ in \textbf{C} is an object $B\in  \mathbf{C}$, a multiplication $m \in\mathrm{Hom}_{\mathbf{C}}( B\otimes B,B)$ and a unit $\eta\in\mathrm{Hom}_{\mathbf{C}}(1, B)$ satisfying associativity etc. Algebras are involved in both the extension theory for VOAs and their orbifold theory, as we'll see. When $\mathbf{C}$ is a MTC, $B$ is called \textit{commutative} if $m=m\circ c_{B,B}$. A $B$-module is an object $M\in\mathbf{C}$ and a module multiplication $\mu\in\mathrm{Hom}_{\mathbf{C}}(B\otimes M,M)$ satisfying $\mu\circ(m\otimes \mathrm{Id}_M)=\mu\circ(\mathrm{Id}_B\otimes \mu)$ and $\mu\circ(\eta\otimes\mathrm{Id}_M)=\mathrm{Id}_M$. Given a commutative algebra $B$ in $\mathbf{C}$, the category of all $B$-modules is denoted $\mathbf{Rep}_{\mathbf{C}}\,B$. When $\mathbf{Rep}_{\mathbf{C}}\,B$ is  semisimple and $B$ is commutative, we  call $B$ \textit{\'etale}. These are the algebras we are interested in.  When dim$\,\mathrm{Hom}_{\mathbf{C}}(1,B)=1$ and $B$ is \'etale, $\mathbf{Rep}_{\mathbf{C}}\,B$ is naturally a fusion category.

Let $B$ be \'etale and dim$\,\mathrm{Hom}_{\mathbf{C}}(1,B)=1$. For reasons that will be clear next subsection, we call the forgetful functor $\mathbf{Rep}_{\mathbf{C}}\,B\to \mathbf{C}$ \textit{restriction}. By \textit{induction} we mean its adjoint  $\mathbf{C}\to \mathbf{Rep}_{\mathbf{C}}\,B$; it is a tensor functor. A simple $B$-module $M$ is called \textit{local} if Res$\,M$ has a well-defined twist $\theta$. The local $B$-modules form a MTC we call $\mathbf{Rep}_{\mathbf{C}}^{\mathrm{loc}}\,B$.

The special case most relevant to orbifold theory is when the MTC $\mathbf{C}$ contains a subcategory braided tensor equivalent to $\mathbf{Rep}\,\,G$ for some finite group $G$. Then the (regular representation) object $B_G=\oplus_{\chi\in\mathrm{Irr}(G)} \chi(e)\,\chi\in \mathbf{Rep}\,\,G$ has a natural \'etale algebra structure in $\mathbf{Rep}\,\,G$ and hence $\mathbf{D}$.
In this case, $\mathbf{C}_G=\mathbf{Rep}_{\mathbf{C}}\,B_G$ has the structure of a \textit{braided $G$-crossed category} and is called the \textit{de-equivariantization} of $\mathbf{C}$. This means that the fusion category $\mathbf{C}_G$ has a $G$-grading $\oplus_{g\in G}(\mathbf{C}_G)_g$, a $G$-action  with $h\in G$ sending $(\mathbf{C}_G)_g$ to $(\mathbf{C}_G)_{hgh^{-1}}$, and a $G$-crossed braiding $c_{x,y}\in \mathrm{Hom}_{\mathbf{C}_G}(x\otimes y,{}^gy\otimes x)$ where $x\in(\mathbf{C}_G)_g$. See e.g.\  \cite{MuA} for the full definition. The local $B_G$-modules can be identified with $(\mathbf{C}_G)_e$. We say $\mathbf{C}_G$ is a \textit{braided $G$-crossed extension} of $(\mathbf{C}_G)_e$. Conversely, given a faithful braided $G$-crossed category $\mathbf{D}$,  where $\mathbf{D}_e$ is a MTC (faithful means $\mathbf{D}_g\ne 0$ for all $g$), \textit{equivariantization} is the process constructing a MTC $\mathbf{D}^G$ containing $\mathbf{Rep}\,\,G$, such that $(\mathbf{D}^G)_G\cong \mathbf{D}$.

\begin{proposition}\label{rankDg} Let $\mathbf{D}=\oplus_{g\in G}\mathbf{D}_g$ be a faithful braided $G$-crossed extension of a MTC $\mathbf{C}\cong\mathbf{D}_e$. Then rank$(\mathbf{D}_g)$ equals the number of isomorphism classes $[M]$ of simples in $\mathbf{C}$ with ${}^gM\cong M$, and $\mathrm{FPdim}(\mathbf{D}_g)=\mathrm{FPdim}(\mathbf{C})$ for all $g\in G$.\end{proposition}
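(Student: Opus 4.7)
The plan is to treat the two claims separately. The FPdim equality is a general fact about faithful $G$-graded fusion categories and does not use the braiding; the rank equality essentially uses the $G$-crossed braiding.

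\textbf{Equal Frobenius--Perron dimensions.} Form the component regular objects $R_g := \sum_{X\in\mathrm{Irr}(\mathbf{D}_g)}\mathrm{FPdim}(X)\,X$, so that the regular object of $\mathbf{D}$ is $R = \sum_g R_g$. The standard identity $R\otimes Y \cong \mathrm{FPdim}(Y)\,R$, valid for every simple $Y\in\mathbf{D}$, respects the $G$-grading: for $Y\in\mathbf{D}_h$ the degree-$g$ component reads $R_{gh^{-1}}\otimes Y\cong\mathrm{FPdim}(Y)\,R_g$. Taking FPdim and cancelling the nonzero scalar $\mathrm{FPdim}(Y)$ yields $\mathrm{FPdim}(R_{gh^{-1}}) = \mathrm{FPdim}(R_g)$. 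Faithfulness supplies a simple $Y$ in every $\mathbf{D}_h$, so $\mathrm{FPdim}(\mathbf{D}_g) = \mathrm{FPdim}(R_g)$ is independent of $g$; comparing with $g=e$ identifies the common value with $\mathrm{FPdim}(\mathbf{C})$.

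\textbf{Rank equality.} For any simple $M\in\mathbf{C}$ and simple $X\in\mathbf{D}_g$, composing the two crossed braidings $c_{M,X}:M\otimes X\to X\otimes M$ and $c_{X,M}:X\otimes M\to{}^gM\otimes X$ produces a natural isomorphism $M\otimes X\cong{}^gM\otimes X$. By adjunction this forces $M$ and ${}^gM$ to appear with equal multiplicities in $X\otimes X^*\in\mathbf{C}$; equivalently, the left $K_0(\mathbf{C})$-action on $K_0(\mathbf{D}_g)$ factors through the quotient by the relations $[M]-[{}^gM]$. Diagonalising $K_0(\mathbf{C})\otimes\bbC$ with the $S$-matrix of $\mathbf{C}$ via Verlinde's formula \eqref{verl}, the rows of $S$ are simultaneous eigenbases for the fusion operators, and this quotient ring therefore has $\bbC$-dimension equal to the number of $g$-fixed isomorphism classes of simples of $\mathbf{C}$. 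Combined with the invertibility of $\mathbf{D}_g$ as a $(\mathbf{C},\mathbf{C})$-bimodule category (a consequence of $\mathbf{D}_g\otimes_{\mathbf{C}}\mathbf{D}_{g^{-1}}\cong\mathbf{C}$, which itself follows from faithfulness), this pins down $\mathrm{rank}(\mathbf{D}_g)$ as exactly that fixed-point count.

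\textbf{Main obstacle.} The real work is in the last step: showing the rank equals the fixed-point count, rather than the orbit count or some weighted variant. The cleanest route is to invoke the Kirillov--Ostrik--ENO classification of invertible bimodule categories of an MTC via the Brauer--Picard group: the rank of the bimodule category associated to a braided autoequivalence $\phi$ of $\mathbf{C}$ is $|\mathrm{Irr}(\mathbf{C})^\phi|$, which applied to $\phi={}^g(\cdot)$ gives the claim. A more self-contained alternative is a direct character computation: the trace of ${}^g(\cdot)$ on $K_0(\mathbf{C})\otimes\bbC$ equals the number of $g$-fixed simples, and Verlinde-type manipulations inside the equivariantization $\mathbf{D}^G$ can be used to express this trace as $\mathrm{rank}(\mathbf{D}_g)$. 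Either way, the essential point is that invertibility of $\mathbf{D}_g$ as a bimodule category, rather than mere indecomposability, is what forces the count to be of fixed points and not orbits.
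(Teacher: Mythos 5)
The paper does not actually prove this proposition: it cites Bischoff for the rank statement and Proposition 8.20 of \cite{ENO0} for the FPdim statement. Measured against that, your FPdim argument is correct and complete --- the regular-object identity $R\otimes Y\cong \mathrm{FPdim}(Y)\,R$ respects the grading, faithfulness gives a simple in every component, and this is essentially the ENO proof. So for the second claim you have supplied a genuine proof where the paper only cites.

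For the rank statement there is a real gap. Your observation that the crossed braiding gives $M\otimes X\cong {}^gM\otimes X$ for simple $X\in\mathbf{D}_g$ is correct, and it does show that the left $K_0(\mathbf{C})$-action on $K_0(\mathbf{D}_g)\otimes\bbC$ factors through the quotient by the ideal generated by $[M]-[{}^gM]$, a quotient of dimension equal to the number of $g$-fixed simples. But being a module over $\bbC^{f_g}$ puts no constraint on the dimension of $K_0(\mathbf{D}_g)$ in either direction --- the module could be supported on few factors with high multiplicity or vice versa --- so this step, while true, carries no weight toward the conclusion. The entire content is then outsourced to the assertion that the rank of the invertible $(\mathbf{C},\mathbf{C})$-bimodule category attached to a braided autoequivalence $\phi$ equals $|\mathrm{Irr}(\mathbf{C})^\phi|$; that assertion \emph{is} the first statement of the proposition, and it is exactly what Bischoff's paper (the reference cited here) proves, by a nontrivial Verlinde/$S$-matrix argument in $\cZ(\mathbf{D})$ that your sketch does not reproduce. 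Note also that invertibility of $\mathbf{D}_g$ does not transparently descend to Grothendieck groups ($K_0$ of a relative Deligne product is not the tensor product of $K_0$'s), so the final sentence of your argument cannot be completed as written. In short: the rank claim in your write-up is a citation of the needed theorem dressed with a correct but non-load-bearing reduction; as such it matches the paper's own treatment, but it is not a proof.
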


By rank$(\mathbf{D}_g$) we mean the number of isomorphism classes of simples in the subcategory $\mathbf{D}_g$.
For a simple proof of the first statement, see \cite{Bi1}; the second statement is Proposition 8.20 of \cite{ENO0}.  The VOA analogue is proved in Theorem 2.9(2) of \cite{DRX}.

By $\mathbf{C}\boxtimes\mathbf{D}$ we mean their \textit{Deligne product}, i.e.\ simples are pairs $(x,y)$ for $x\in\mathbf{C}$ and $y\in\mathbf{D}$ etc.
The following result (Corollary 3.30 in \cite{DMNO}) will be useful latter:
\begin{proposition} \label{Z(CG)}
Let $\mathbf{D}$ and $\mathbf{C}\cong\mathbf{D}_e$ be as in Proposition \ref{rankDg}.  Then $\cZ(\mathbf{D})$ is braided tensor equivalent to $ \mathbf{D}^G\boxtimes \mathbf{C}^{\rev}$, i.e.\  there is an injective tensor functor $\mathbf{C}^{\rev}\hookrightarrow \mathcal{Z}(\mathbf{D})$ such that $\mathbf{D}^G\cong (\mathbf{C}^{\rev})'$. 
\end{proposition}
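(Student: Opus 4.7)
The plan is to realize $\cZ(\mathbf{D})$ as a Deligne product via M\"uger's factorization theorem: a nondegenerate braided fusion category $\mathbf{M}$ containing a nondegenerate braided fusion subcategory $\mathbf{A}$ splits as $\mathbf{M}\cong\mathbf{A}\boxtimes\mathbf{A}'$. Applied with $\mathbf{M}=\cZ(\mathbf{D})$ and $\mathbf{A}$ the image of an embedding $\mathbf{C}^{\rev}\hookrightarrow\cZ(\mathbf{D})$, this reduces the proposition to two tasks: (i) produce the embedding, and (ii) identify the M\"uger centralizer $(\mathbf{C}^{\rev})'$ with $\mathbf{D}^G$. Before either, Proposition~\ref{rankDg} and the general FPdim behavior of equivariantization give $\mathrm{FPdim}(\mathbf{D})=|G|\,\mathrm{FPdim}(\mathbf{C})$ and $\mathrm{FPdim}(\mathbf{D}^G)=|G|^2\,\mathrm{FPdim}(\mathbf{C})$, so the identity $\mathrm{FPdim}(\cZ(\mathbf{D}))=\mathrm{FPdim}(\mathbf{D}^G)\cdot\mathrm{FPdim}(\mathbf{C}^{\rev})$ holds and will be what forces essential surjectivity at the end.

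For (i), I would use the $G$-crossed braiding of $\mathbf{D}$ directly. Since $\mathbf{C}=\mathbf{D}_e$ consists of trivially graded objects, the structure maps $c_{c,y}:c\otimes y\to{}^e y\otimes c=y\otimes c$ are honest half-braidings for each $c\in\mathbf{C}$ and $y\in\mathbf{D}$, defining a braided tensor functor $\mathbf{C}\to\cZ(\mathbf{D})$. Composing with the opposite half-braiding construction available in any braided $G$-crossed category yields the desired embedding $\iota^{-}:\mathbf{C}^{\rev}\hookrightarrow\cZ(\mathbf{D})$; faithfulness follows from the conservativity of the forgetful functor $\cZ(\mathbf{D})\to\mathbf{D}$. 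Since $\mathbf{C}$ is an MTC, so is $\mathbf{C}^{\rev}$, and M\"uger's theorem yields $\cZ(\mathbf{D})\cong\mathbf{C}^{\rev}\boxtimes(\mathbf{C}^{\rev})'$.

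For (ii), I would construct a braided tensor functor $F:\mathbf{D}^G\to\cZ(\mathbf{D})$ sending a $G$-equivariant object $(x,\{u_g\})$ to $(x,\beta)$, where the half-braiding with a homogeneous $y\in\mathbf{D}_h$ is the composite of the $G$-crossed braiding $c_{x,y}$ with the rectification supplied by $u$ (extended additively in the grading of $x$). Naturality, the hexagons, and monoidality follow from combining the axioms of a braided $G$-crossed category with the cocycle defining equivariant objects. That the image lies in $(\mathbf{C}^{\rev})'$ is immediate since both half-braidings originate from the same underlying $G$-crossed braiding, so the double braiding collapses. Faithfulness of $F$ is inherited from the forgetful functor $\mathbf{D}^G\to\mathbf{D}$, and the FPdim identity $\mathrm{FPdim}((\mathbf{C}^{\rev})')=\mathrm{FPdim}(\mathbf{D}^G)$ then forces $F$ to be an equivalence onto $(\mathbf{C}^{\rev})'$.

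The main obstacle is the categorical bookkeeping: verifying the hexagon axioms for the constructed half-braidings, compatibility of $F$ with tensor products, and the precise form of the rectification in the non-homogeneous case. This is the standard content of the equivalence between $G$-crossed braided extensions and their equivariantizations, and is the heart of the DMNO argument. Pseudo-unitarity in our VOA setting (discussed in Section~2.1) lets us identify FPdim with categorical dimension throughout, so the dimension-counting step applies without subtlety and closes the equivalence once the faithful functors are in hand.
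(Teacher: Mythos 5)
The paper offers no proof of this proposition---it is quoted directly from Corollary 3.30 of the Davydov--M\"uger--Nikshych--Ostrik paper---so there is no in-paper argument to compare against; your outline is essentially the standard proof from that reference (and from Gelaki--Naidu--Nikshych's work on centers of graded categories): embed the two factors into $\cZ(\mathbf{D})$, check they centralize each other, and close with M\"uger's factorization theorem plus the count $\mathrm{FPdim}(\cZ(\mathbf{D}))=\mathrm{FPdim}(\mathbf{D})^2=|G|^2\,\mathrm{FPdim}(\mathbf{C})^2=\mathrm{FPdim}(\mathbf{D}^G)\cdot\mathrm{FPdim}(\mathbf{C}^{\rev})$. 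The dimension count is right and the architecture is sound.

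However, both of your half-braiding constructions go wrong at exactly the point where the $G$-crossed structure matters, and these are not deferrable bookkeeping. First, there is no ``opposite half-braiding construction'' in a braided $G$-crossed category: for $c\in\mathbf{D}_e$ and $y\in\mathbf{D}_h$ the inverse crossed braiding $(c_{y,c})^{-1}$ is a map ${}^hc\otimes y\to y\otimes c$, not $c\otimes y\to y\otimes c$, so it is not a half-braiding for $c$. There is only one canonical embedding of $\mathbf{D}_e$ into $\cZ(\mathbf{D})$, namely $c\mapsto(c,c_{c,-})$; whether it is braided for $\mathbf{C}$ or for $\mathbf{C}^{\rev}$ depends on the convention chosen for the braiding on the centre, which you should fix and verify rather than invoke a second construction. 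Second, and more seriously, the half-braiding for $F(x,\{u_g\})$ cannot be built from $c_{x,y}$: that map lands in ${}^gy\otimes x$, and nothing is available to rectify ${}^gy$ back to $y$, since $y$ carries no equivariant structure. The correct recipe puts $x$ in the second slot: for $y\in\mathbf{D}_h$ set $\beta_y:=\bigl((u_h\otimes\mathrm{id}_y)\circ c_{y,x}\bigr)^{-1}\colon x\otimes y\to y\otimes x$, which is where the equivariant structure $u_h\colon{}^hx\to x$ actually enters. With that correction the double braiding against $\iota(c)$ for $c\in\mathbf{D}_e$ becomes $\beta_c\circ c_{c,x}=c_{c,x}^{-1}\circ c_{c,x}=\mathrm{id}$ (using $u_e=\mathrm{id}$), which is the centralization you asserted was immediate. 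The remaining hexagon and monoidality checks are indeed routine; full faithfulness of $F$ follows because any braided tensor functor out of the nondegenerate category $\mathbf{D}^G$ is an embedding, and your dimension count then finishes the proof.
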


Let $\mathbf{C}$ be a MTC. By a \textit{braided tensor autoequivalence} we mean a braided tensor functor which is also an autoequivalence of category $\mathbf{C}$. Equivalence classes of these form a group $\mathrm{EqBr}(\mathbf{C})$. This group is isomorphic to the group Pic($\mathbf{C})$ of equivalence classes of invertible $\mathbf{C}$-module categories under the operation of relative tensor product, but we will not explicitly use that here. We are interested in the case where $\mathbf{C}$ is pointed, when this group can be described much more explicitly (see Section 2.3). 

\subsection{Rational vertex operator algebras}

For an elementary introduction to vertex operator algebras (VOAs), see e.g.\ \cite{LL}. We restrict attention in this paper to
\textit{strongly rational} VOAs. By this we mean a VOA $\cV$ which is $C_2$-cofinite, regular, of CFT-type (i.e.\ decomposes into $L_0$-eigenspaces as $\cV=\coprod_{n=0}^\infty \cV_n$ where $\cV_0\cong\bbC$ and dim$\,\cV_n<\infty$), and is simple and isomorphic to its contragredient $\cV^*$ as a $\cV$-module. These conditions guarantee the richest representation theory.

Throughout this paper, by a  $\cV$-module $M$ we mean a grading-restricted ordinary module. This means $L_0$ also decomposes $M$ into a sum $\coprod_{h\in\mathbb{Q}}M_h$ of finite-dimensional spaces.
By \textbf{Mod}$\,\,\cV$ we mean the category of $\cV$-modules.

\begin{theorem} Suppose $\cV$ is strongly rational. Then $\mathbf{Mod}\,\,\cV$ has the structure of a MTC. \end{theorem}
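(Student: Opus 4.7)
The plan is to assemble the MTC structure on $\mathbf{Mod}\,\cV$ out of the substantial body of results of Huang--Lepowsky(--Zhang) and Huang, which is where this theorem lives. I would organize the proof around the following four blocks: (i) fusion-category structure, (ii) braiding, (iii) ribbon twist, and (iv) nondegeneracy.

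For (i), the hypotheses of strong rationality are tailor-made for this. Regularity of $\cV$ gives complete reducibility of grading-restricted modules, hence semisimplicity of $\mathbf{Mod}\,\cV$, and $C_2$-cofiniteness together with regularity gives only finitely many isomorphism classes of simples. Grading-restrictedness of modules, plus the finiteness of each $L_0$-eigenspace, forces $\mathrm{Hom}_{\cV}(M,N)$ to be finite-dimensional for every pair. CFT-type plus simplicity plus $\cV\cong\cV^*$ show that $\cV$ itself is a simple object with one-dimensional endomorphism ring and serves as a well-behaved tensor unit. The contragredient module $M^*$ provides rigid duals. The nontrivial piece is the tensor product itself: I would invoke the Huang--Lepowsky $P(z)$-tensor product construction, where $M\boxtimes_{P(z)}N$ represents the functor of $P(z)$-intertwining operators out of $M\otimes N$. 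Representability and associativity (the hardest ingredient here) rest on the convergence and associativity of matrix elements of products and iterates of intertwining operators, which is precisely the theorem guaranteed under $C_2$-cofiniteness and regularity.

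For (ii), once the tensor product is in place, a braiding $c_{M,N}:M\boxtimes N\to N\boxtimes M$ is produced by analytic continuation of intertwining operators along a fixed path in the punctured plane, exploiting the skew-symmetry relation on intertwiners. The hexagon axioms then follow from compatibility of analytic continuation with the associativity isomorphisms of step (i). For (iii), the operator $\theta_M:=e^{2\pi i L_0}$ is a natural automorphism of the identity functor of $\mathbf{Mod}\,\cV$ because $L_0$ acts by rational numbers on each module, and the balancing equation $\theta_{M\boxtimes N}=c_{N,M}\circ c_{M,N}\circ(\theta_M\boxtimes\theta_N)$ is a standard consequence of $L_{-1}$-derivation properties of intertwining operators; compatibility with duals uses $L_0$-invariance of the contragredient pairing.

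Block (iv), nondegeneracy of the braiding, is where I expect the main obstacle to lie, and I would defer it to Huang's modularity theorem. The idea is to relate the genus-zero categorical $S$-matrix, defined via $S_{M,N}\propto\mathrm{Tr}(c_{N,M}\circ c_{M,N})$, to the genus-one $S$-transformation of the characters $\chi_M(\tau)$. Zhu's modular invariance theorem (which again needs $C_2$-cofiniteness and the other strong-rationality hypotheses) shows that the span of characters carries a representation of $\mathrm{SL}_2(\bbZ)$, and Huang's theorem identifies the two $S$-matrices up to the standard normalization. Invertibility of the modular $S$ then forces the categorical $S$ to be invertible, which is equivalent to $\mathbf{Mod}\,\cV$ being nondegenerately braided. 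Combined with (i)--(iii), this gives the full modular tensor category structure. The genuinely hard analysis — convergence of intertwiner products, $C_2$-cofinite modular invariance, and the comparison of the two $S$-matrices — is the heart of the argument; everything else is essentially bookkeeping assembling these deep inputs into the MTC axioms.
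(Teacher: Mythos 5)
The paper gives no proof of this statement at all: it is quoted as Theorem 4.6 of Huang's paper \cite{Hu} (building on the Huang--Lepowsky--Zhang tensor category theory), and your outline is a faithful sketch of exactly that architecture, so in that sense you are taking the same route as the paper. One caveat worth flagging: you treat rigidity as essentially free in block (i) (``the contragredient module $M^*$ provides rigid duals''), but in Huang's cited paper rigidity is itself one of the two main theorems, on the same footing as nondegeneracy --- showing that the evaluation and coevaluation maps built from the contragredient pairing compose to nonzero multiples of the identity requires the Verlinde-formula and modular-invariance machinery that you reserve only for block (iv). So the correct accounting is that both rigidity and nondegeneracy are outputs of the genus-one analysis, not just the latter; otherwise your decomposition and the deep inputs you invoke (convergence and associativity of intertwiner products, Zhu's modular invariance, the identification of the categorical and modular $S$-matrices) are the right ones.
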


This is due to Huang (see Theorem 4.6 of \cite{Hu}). The tensor unit is $\cV$ itself. For simple $M$, the ribbon twist $\theta(M)=e^{2\pi \mathrm{i}h_M}$ where $h_M$ is the conformal weight (the smallest $h$ with $M_h$ nonzero). If $h_M>0$ for all simple $M\not\cong\cV$ (the case in this paper) then $\mathbf{Mod}\,\,\cV$ will be pseudo-unitary. The modular data governs the modular transformations of the characters $\chi_M(\tau)=q^{-c/24}\sum_h\mathrm{dim}\,M_h\,q^h$ ($q=e^{2\pi \mathrm{i}\tau}$) but this plays no role in the paper.

The simplest strongly rational VOAs are the lattice VOAs $\cV_L$, where $L$ is even and positive definite  (see e.g.\ Sections 6.4,6.5 of \cite{LL}). They play a large role in this paper. $\cV_L$ is generated by basis elements $e^v$, $v\in L$, in a twisted group algebra of $L$, and by Heisenberg modes $h_{-n}$ for $n\in\Z_{>0}$ and $h\in \bbC\otimes_\Z L$. The $e^v$ satisfy $e^ue^v=\varepsilon(u,v)\,e^{u+v}$ for some 2-cocycle $\varepsilon\,{:}\,L\times L\to \{\pm 1\}$ obeying
\begin{eqnarray}\varepsilon(u,v)\,\varepsilon(u+v,w)=\varepsilon(v,w)\,\varepsilon(u,v+w)\,,\label{eps1}\\ 
\varepsilon(u,v)\,\varepsilon(v,u)=(-1)^{u\cdot v}\,,\\
\varepsilon(u,0)=\varepsilon(0,u)=1\,,\label{eps3}\end{eqnarray}
for all $u,v,w\in L$. As a VOA, $\cV_L$ is independent of the choice of $\varepsilon$; a convenient  choice is:
\begin{equation}\label{eps}\varepsilon(v_i,v_j)=\left\{\begin{array}{cc}(-1)^{v_i\cdot v_j}&\mathrm{if}\ i<j\\ 1&\mathrm{otherwise}\end{array}\right.\end{equation}
for any $\Z$-basis $v_1,...,v_d$ of $L$, and extend $\varepsilon$  linearly to all $u,v\in L$. This $\varepsilon$ satisfies \eqref{eps1}--\eqref{eps3}.

Given  strongly rational VOAs $\cV,\cW$, then $\cV\otimes\cW$ is also strongly rational, with $\mathbf{Mod}\,\,\cV\otimes\cW\cong\cV\boxtimes\mathbf{\cW}$.

A VOA $\cV$ is called \textit{holomorphic} if it is strongly rational and $\mathbf{Mod}\,\,\cV\cong\mathbf{Vec}$. For example, $\cV_L$ for self-dual $L$ are holomorphic, as is the Moonshine module $\cV^\natural$.

The theory of VOA extensions is entirely categorical \cite{KO,HKL,CKM}. If $\cW\subseteq\cV$ are both strongly rational and share the same conformal vector, then $\cV$ regarded as a $\cW$-module has the structure of an \'etale algebra $B$  in $\mathbf{Mod}\,\,\cW$ with dim$\,\mathrm{Hom}_{\mathbf{Mod}\,\,\cW}(1,B)=1$. $\mathbf{Mod}\,\,\cV$ will be the subcategory of local modules in the fusion category $\mathbf{Rep}_{\mathbf{Mod}\,\,\cW}\,B$. Moreover, $\cZ(\mathbf{Rep}_{\mathbf{Mod}\,\,\cW}\,B)$ is braided tensor equivalent to $\mathbf{Mod}\,\,\cW\boxtimes(\mathbf{Mod}\,\,\cV)^{\mathrm{rev}}$. Conversely, given any \'etale algebra $B$ in a pseudo-unitary $\mathbf{Mod}\,\,\cW$ with dim$\,\mathrm{Hom}_{\mathbf{Mod}\,\,\cW}(1,B)=1$, there is a strongly rational $\cV\supseteq\cW$ such that $B=\mathrm{Res}_\cW\,\cV$ (pseudo-unitarity here is only needed to ensure $\theta(B)=1$, otherwise we'd need to assume that).

By an \textit{automorphism} $g$ of a VOA $\cV$, we mean a linear isomorphism $g\,{:}\,\cV\to\cV$ satisfying $g(u_nv)=(g(u))_ng(v)$ and fixing the conformal vector $\omega$. We will discuss the automorphisms of $\cV_L$ in Section 2.3. 
Given a VOA $\cV$ and a finite group $G$ of automorphisms, by the fixed-point VOA (often called \textit{orbifold}) $\cV^G$ we mean the set of all $v\in\cV$ fixed by all $g\in G$. Then $\cV^G$ will always be a VOA.

The theory of VOA orbifolds is largely categorical, though whether a given VOA has a group of automorphisms isomorphic to a given group $G$,  depends on the VOA and not its category.

\begin{conjecture} \label{conj} Suppose $\cV$ is strongly rational and $G$ is a finite group of automorphisms. Then $\cV^G$ is also strongly rational. \end{conjecture}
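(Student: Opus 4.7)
The plan is to establish each of the five properties making up strong rationality in turn. CFT-type, simplicity, and self-contragredience are comparatively straightforward. For CFT-type, each graded piece $(\cV^G)_n\subseteq\cV_n$ is finite-dimensional, and $(\cV^G)_0=\cV_0^G=\bbC\mathbf{1}$ since $G$ fixes the vacuum. For simplicity, I would invoke the standard Dong--Mason argument: decompose $\cV$ as a $(G\times\cV^G)$-module into isotypic components $\cV^\chi$ and observe that any proper ideal of $\cV^G$ would generate a proper $G$-invariant ideal of $\cV$, contradicting simplicity of $\cV$. Self-contragredience follows because the nondegenerate invariant bilinear form on $\cV$, made $G$-equivariant by averaging if necessary, restricts nondegenerately to $\cV^G$.

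The substantive content of the conjecture is $C_2$-cofiniteness and regularity. My plan is to induct on $|G|$ via a composition series $1=G_0\triangleleft G_1\triangleleft\cdots\triangleleft G_n=G$, passing through the tower $\cV\supseteq\cV^{G_1}\supseteq\cdots\supseteq\cV^{G_n}=\cV^G$. Each step is the orbifold of the previous fixed-point algebra by the simple quotient $Q_i=G_{i+1}/G_i$, so the induction hypothesis reduces the problem to showing that the orbifold of a strongly rational VOA by a \textit{finite simple} group is strongly rational. The delicate point in the induction is that one must verify at each stage that the intermediate VOA $\cV^{G_i}$ still admits a faithful action of $Q_i$ compatible with all the regularity conditions; this uses Galois correspondence for VOAs together with the fact that strong rationality of $\cV^{G_i}$ gives a well-behaved category of $Q_i$-twisted modules to work with.

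If $G$ is solvable, each $Q_i$ is cyclic of prime order and the step is a theorem: Miyamoto proved $C_2$-cofiniteness for cyclic orbifolds, and Carnahan--Miyamoto proved regularity. Both arguments rely on explicit construction and modular invariance of $g$-twisted characters for a generator $g$. The hard part, and the genuine obstacle, is the nonsolvable case: when some $Q_i$ is a nonabelian finite simple group, the existing techniques do not extend, because they depend on having a single generator to enumerate twisted sectors and on one-parameter modular transformations. A full proof would likely require either a twisted-intertwiner theory simultaneously controlling all elements of a finite simple group together with a multi-parameter modularity statement for twisted characters, or a transfer of Xu's unconditional conformal-net theorem \cite{Xu} across the (still conjectural) equivalence between VOAs and conformal nets \cite{CKLW}. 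Pending such developments, I would be content to state the result conditionally, noting that it is unconditional whenever $G$ is solvable and, in particular, in all of the concrete examples constructed later in the paper.
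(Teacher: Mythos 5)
The statement you are asked to prove is stated in the paper as a \emph{conjecture}, and the paper offers no proof of it: it explicitly assumes Conjecture \ref{conj} throughout, citing Main Theorem 2 of \cite{McR} and Corollary 5.25 of \cite{CM} for the solvable case and noting that the conformal-net analogue is Xu's unconditional theorem \cite{Xu}. So there is no argument in the paper to compare yours against, and your proposal --- which verifies the elementary axioms, reduces via a composition series to orbifolds by finite simple groups, invokes the Miyamoto and Carnahan--Miyamoto results when the composition factors are cyclic of prime order, and then concedes that the nonabelian simple case is open --- is an accurate description of the current state of knowledge rather than a proof. The easy parts are handled correctly: CFT-type and self-contragredience restrict as you say, and simplicity of $\cV^G$ follows from the isotypic decomposition of $\cV$ as a $G\times\cV^G$-module. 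The reduction through $\cV\supseteq\cV^{G_1}\supseteq\cdots\supseteq\cV^G$ is also the standard strategy, and your caveat about needing the quotient $Q_i=G_{i+1}/G_i$ to act faithfully on $\cV^{G_i}$ (via the Galois correspondence for simple VOAs) is the right thing to check at each stage.

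The genuine gap is exactly the one you name: no technique currently establishes $C_2$-cofiniteness or regularity of $\cV^Q$ when $Q$ is a nonabelian finite simple group, because the known proofs hinge on modular invariance of $g$-twisted trace functions for a single generator $g$ and do not assemble into a statement controlling all twisted sectors of a group that is not generated by a normal tower of cyclic extensions. Your proposal does not close that gap, and neither does the paper --- which is precisely why the result appears there as Conjecture \ref{conj} and why all subsequent theorems that need it for nonsolvable $G$ (e.g.\ Theorems \ref{doubleVOA} and \ref{recon}) are stated conditionally on it. Your conditional formulation is therefore the correct resolution; just be aware that you have written a status report on an open problem, not a proof.
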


This is proven for $G$ solvable (see Main Theorem 2 in \cite{McR} and  Corollary 5.25 in \cite{CM}). 

Given a $\cV$-automorphism $g$, there is a notion of  \textit{$g$-twisted $\cV$-module}.  Their definition  is not important for us because of  Proposition 4.13 of \cite{McR}. More precisely, 
since  $\cV^G\subseteq\cV$, there is an \'etale algebra $B_G=\mathrm{Res}_{\cV^G}\,\cV\in\mathbf{Mod}\,\,\cV^G$. McRae showed that the notion of $g$-twisted $\cV$-module for some $g\in G$ coincides with that of $B_G$-modules in $\mathbf{Mod}\,\,\cV^G$.  Denote the category of (direct sums of) $g$-twisted modules for $g\in G$ by $\mathbf{TwMod}_G\,\mathcal{V}$. Then it coincides with $\mathbf{Rep}_{\Mod \, \mathcal{V}^G} \, B_G$.  Furthermore:
\begin{theorem} \label{voathm} Suppose $\cV$ is strongly rational. Let $G$ be a finite group of automorphisms of $\cV$. Assume that the orbifold $\cV^G$ is strongly rational. Then the category  $\mathbf{TwMod}_G\,\cV$ of $G$-twisted $\cV$-modules form a braided $G$-crossed extension of $\mathbf{Mod}\,\,\cV$, tensor equivalent to $\mathbf{Rep}_{\mathbf{Mod} \, \mathcal{V}^G} \, B_G$. The category $\mathbf{Mod}\,\,\cV^G$ is braided tensor equivalent to the equivariantization  $(\mathbf{TwMod}_G\,\cV)^G$.\end{theorem}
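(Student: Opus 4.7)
The strategy is to combine McRae's identification $\mathbf{TwMod}_G\,\cV \cong \mathbf{Rep}_{\mathbf{Mod}\,\cV^G}\,B_G$ (already recalled in the excerpt) with the general Kirillov--Ostrik theory of de-equivariantization by a Tannakian subcategory. The first step is to exhibit a fully faithful braided embedding $\mathbf{Rep}\,G \hookrightarrow \mathbf{Mod}\,\cV^G$ whose image is Tannakian and sends the regular representation algebra $\bbC[G]$ to $B_G$. This embedding comes from the Dong--Li--Mason decomposition: as a $(G,\cV^G)$-bimodule, $\cV \cong \bigoplus_\chi W_\chi \otimes M^\chi$, where $\chi$ runs over irreducible $G$-representations and the $M^\chi$ are pairwise inequivalent simple $\cV^G$-modules, and the assignment $W_\chi \mapsto M^\chi$ is monoidal because the $\cV$-multiplication respects the isotypic decomposition. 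The braiding on the image is symmetric (equivalently, the image is Tannakian) since these modules all sit inside the commutative algebra $B_G$; moreover, $B_G$ is by construction the image of the regular representation algebra.

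Given this Tannakian subcategory, the de-equivariantization theorem gives $\mathbf{Rep}_{\mathbf{Mod}\,\cV^G}\,B_G$ a canonical braided $G$-crossed fusion category structure, with trivially graded component equal to the MTC of local $B_G$-modules. By the VOA extension theory already recalled in the excerpt, the local $B_G$-modules are precisely the ordinary $\cV$-modules, so the trivially graded part is $\mathbf{Mod}\,\cV$. Transporting this structure across McRae's fusion equivalence equips $\mathbf{TwMod}_G\,\cV$ with a braided $G$-crossed extension of $\mathbf{Mod}\,\cV$, yielding the first assertion. The second assertion then follows from the general theorem that equivariantization by a Tannakian $\mathbf{Rep}\,G$ inverts de-equivariantization, giving $(\mathbf{TwMod}_G\,\cV)^G \cong \mathbf{Mod}\,\cV^G$ as braided tensor categories.

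The main obstacle is verifying that the $G$-grading and $G$-action produced categorically agree with the \emph{intrinsic} ones on $\mathbf{TwMod}_G\,\cV$: the grade-$g$ component should actually be the category of $g$-twisted modules, and the categorical $h$-action should carry $g$-twisted modules to $hgh^{-1}$-twisted modules. Unwinding McRae's construction, the $B_G$-module corresponding to a $g$-twisted module $M$ is built by letting $\cV$ act on $M$ through its $g$-twisted vertex operators, which picks out the grade-$g$ sector under the categorical grading, while conjugation of this twisting by $h \in G$ realizes the categorical $h$-action. Once this compatibility is verified by inspection, the remainder of the argument is a formal consequence of the categorical machinery applied to $B_G$.
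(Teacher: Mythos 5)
The paper offers no proof of its own here---it simply cites Theorems 4.15 and 4.17 of \cite{McR}---and your outline reproduces the architecture of that cited proof: McRae's identification $\mathbf{TwMod}_G\,\cV\cong\mathbf{Rep}_{\mathbf{Mod}\,\cV^G}\,B_G$, the Tannakian embedding $\mathbf{Rep}\,G\hookrightarrow\mathbf{Mod}\,\cV^G$ coming from the Dong--Li--Mason decomposition of $\cV$, de-equivariantization along $B_G$ to get the braided $G$-crossed structure, equivariantization to recover $\mathbf{Mod}\,\cV^G$, and the compatibility check that the categorical grading and $G$-action match the intrinsic twisting. This is correct and essentially the same approach as the source the paper relies on (the only imprecise spot is your one-line justification that the image of $\mathbf{Rep}\,G$ is symmetric, which needs the standard Dong--Li--Mason/M\"uger argument rather than merely the commutativity of $B_G$).
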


This is Theorems 4.15,4.17 of \cite{McR}  (see also \cite{Kir0,Kir}). Actually, McRae works in far greater generality than expressed here.
The  $\cV$-automorphisms which fix $\cV^G$-pointwise form the set of all invertible $\alpha\in\mathrm{End}_{\mathbf{Mod}\,\,\cV}(B_G)$ satisfying $m\circ(\alpha\otimes\alpha)=m$ and $\alpha\circ\eta=\eta$, and this by Proposition 3.2(iv) of \cite{MuA} is isomorphic to $G$. So the tensor category story matches the VOA one.

\begin{remark} To get Theorem \ref{voathm}, the tensor product
on $\mathbf{TwMod}_G\,\cV$  is defined in \cite{McR} through what they call twisted intertwining operators (see \cite[Section 3.5]{CKM} for more details). When $G$ is finite and abelian, it is known \cite[Remark 4.16]{McR} that these will coincide with the twisted intertwining operators first defined by Xu in \cite{XuX}. To our knowledge, it is currently unknown if their twisted intertwining operators always coincide with the twisted intertwining operators defined by Huang in \cite{Hua1}. See Sections $5$  and $6$ of \cite{Hua2} for a more in-depth discussion. This gap in the VOA literature plays no role in this paper.
\end{remark}

Given an automorphism group $G$ of $\cV$, $G$ will permute the equivalence classes of $\cV$-modules, and in fact act as (possibly trivial) automorphisms of the fusion ring of $\mathbf{Mod}\,\,\cV$. This is part of the $G$-action on $\mathbf{Rep}_{\cV^G}\,B_G\cong\mathbf{TwMod}_G\,\cV$, restricted to $(\mathbf{TwMod}_G\,\cV)_e$. We call this the \textit{module-map}.

The structural theory for strongly rational conformal nets $\cA$ is somewhat easier. That their representations form a
 (unitary) MTC, is established in Section 5 of \cite{KLM}. That $\cA^G$ is also strongly rational, for any finite $G$, is Theorem 2.6 in \cite{Xu}. That the $G$-twisted representations (called \textit{solitons} in the conformal net literature) form a braided $G$-crossed category, is Theorem 2.21 in \cite{Mu}. That \textbf{Rep}$\,\,\cA^G$ is braided tensor equivalent to the equivariantization $(\mathrm{TwRep}_G\,\cA)^G$, is Theorem 3.12 of \cite{Mu}.

\subsection{Pointed MTCs and pointed VOAs}

Suppose a fusion category $\mathbf{C}$  is pointed, i.e. all of its simple objects are invertible, or equivalently its fusion (or Grothendieck) ring is $\Z G$ for some finite group $G$. Then $\mathbf{C}$ is tensor equivalent to $\mathbf{Vec}_G^\omega$ for some normalized 3-cocycle $\omega\in Z^3(G,\bbC^\times)$ (normalized means $\omega(e,h,k)=\omega(g,e,k)=\omega(g,h,e)=1$ $\forall g,h,k\in G$). Moreover,  $\mathbf{Vec}_G^\omega$ is tensor equivalent to $\mathbf{Vec}_H^{\omega'}$ only if $G\cong H$ as groups, and  $\mathbf{Vec}_G^\omega$ is tensor equivalent to $\mathbf{Vec}_G^{\omega'}$ iff $[\omega']=[\alpha^*\omega]$ in $H^3(G,\bbC^\times)$ for some outer automorphism $\alpha$ of $G$ (Proposition 2.6.1(iii) of \cite{book}).

Suppose now an MTC $\mathbf{C}$  is pointed. Then the braiding on $\mathbf{C}$ forces the group  to be abelian. Let $A$ be any finite abelian group. The MTCs $\mathbf{C}$ whose fusion ring is $\Z A$ can be parametrized in two different ways: in terms of \textit{metric groups} $(A,q)$ where $q:A\to\C^\times$ is a non-degenerate quadratic form, or in terms of \textit{abelian 3-cocycles} $(\omega,c)$. The quadratic form description directly connects to the modular data -- in particular, the ribbon twist is $\theta(x)=q(x)\Id_x$, and the $S$-matrix comes from the associated bicharacter of $q$. The equivalence of these two pictures is explained in Section 8.4 of \cite{book} -- e.g.\ $q(a)=c_{a,a}$. We will denote this modular tensor category \textbf{Vec}$_A^{(\omega,c)}=\mathrm{\textbf{Vec}}_A^q$. Exercise 8.4.8 of \cite{book} says when \textbf{Vec}$_A^{(\omega,c)}$ and \textbf{Vec}$_A^{(\omega',c')}$ are braided tensor equivalent.

The pairs $(\omega,c)$ are not arbitrary, they are subject to compatibility equations (see (8.11) in \cite{book}). For example, up to equivalence, $A=\Z/2$ we can take $\omega(1,1,1)=-1$ and $c(1,1)=\pm\mathrm{i}$ (the other values are all 1). For $|A|$ odd, we can take $\omega$ identically 1, and $c$ to be any nondegenerate bicharacter on $A$.

Incidentally, $(\mathbf{Vec}_A^q)^{\mathrm{rev}}=\mathbf{Vec}_A^{\bar{q}}$ and $\mathbf{Vec}_{A_1}^{q_1}\boxtimes\mathbf{Vec}_{A_2}^{q_2}=\mathbf{Vec}_{A_1\oplus A_2}^{q_1\oplus q_2}$.

In this context, $\omega$ is often cohomologically trivial. More precisely, write $A=A_e\oplus A_o$ where $A_e$ is a 2-group and $A_o$ has odd order, and write $\omega_e$ and $\omega_o$ for the restriction of $\omega$ to $A_e$ resp.\ $A_o$. Then $\omega=\omega_e\omega_o$, and the existence of a braiding forces $[\omega_o]=[1]$, i.e.\ $\omega_o$ is cohomologically trivial. $\omega_e$ will  be cohomologically trivial iff the order of each element $x\in A_e$ is a multiple of the order of its root of unity $q(x)$. When $A_e$ is cyclic, non-degeneracy  of the braiding forces $[\omega_e]$ to be nontrivial.

We say a fusion category is \textit{weakly integral} if FPdim$(\mathbf{C})\in\bbZ$, and that it is \textit{integral} if FPdim$(\mathbf{x})\in\bbZ$ for all objects $x$. Any pointed category is integral.
Proposition 2.18 of \cite{DGNO} says that, since \textbf{C} is (weakly) integral, any braided $G$-crossed extension \textbf{D} will be weakly integral and hence pseudo-unitary, as will its equivariantization \textbf{D}$^G$. Hence we can (and will) choose spherical structures  so that categorical dimensions coincide with Frobenius--Perron dimensions (and are thus positive). In a weakly integral category, each $(\mathrm{FPdim}(x))^2$ will be a positive integer.

The group EqBr($\textbf{C}$) of braided tensor autoequivalences of a pointed modular category $\textbf{C}=\textbf{Vec}_A^q$ is naturally isomorphic to O$(A, q)$, the group of group automorphisms of $A$ that fix $q$.

By a \textit{pointed VOA} we mean a strongly rational VOA $\cV$ whose \textbf{Mod}$\,\,\cV$  is pointed. The simplest examples of pointed VOAs are the lattices ones $\cV_L$, for $L$ even and positive definite. In this case, \textbf{Mod}$\,\,\cV_L=\mathbf{Vec}_A^q$ where $A=L^*/L$ and $q([v])=e^{\pi\mathrm{ i} v\cdot v}$. Given any pointed MTC \textbf{C}, there are even positive definite lattices $L$ such that \textbf{Mod}$\,\,\cV_L\cong\,$\textbf{C} as a MTC (see e.g.\ Theorem 2 in \cite{EGty}). However pointed VOAs certainly need not be lattice VOAs (e.g.\ consider the Moonshine module $\cV^\natural$, or its $\bbZ/N$-orbifolds).

 According to \cite{DN}, the automorphisms  of the VOA $\cV_L$ are generated by isometries of the lattice (i.e. $g:L\to L$ is linear and preserves the inner product on $L$), together with automorphisms of the form $\alpha_x$ for $x\in \bbC \otimes_\bbZ L$. An isometry $\sigma$ of $L$ sends each $h\in\bbC\otimes_\Z L$ to $\sigma(h)$ and $e^v$ to $\eta_\sigma(v)\,e^{\sigma(v)}$ where $\eta_\sigma:L\to\{\pm 1\}$ satisfies
 \begin{equation}\label{eta} \frac{\eta_\sigma(u+v)}{\eta_\sigma(u)\,\eta_\sigma(v)}=\frac{\varepsilon(\sigma(u),\sigma(v))}{\varepsilon(u,v)}\,. \end{equation}
 $\alpha_x$ fixes the $h$ and sends $e^v$ to $e^{2\pi \mathrm{i}x\cdot v}e^v$. We are primarily interested in the $\cV_L$-automorphisms coming from isometries. Any $L$-isometry $\sigma$ has a lift $\widehat{\sigma}$ satisfying  $\widehat{\sigma}(e^v)=e^v$ for all $v\in L$ fixed by $\sigma$; this lift is unique (up to conjugation in Aut$(\cV_L)$) and is called the standard lift.

Let $\sigma$ be an isometry of a lattice $L$. Then its {standard lift} has the same order as $\sigma$, or twice that order. Propositions 7.3 and 7.4  of \cite{EMS} say:

\begin{lemma} \label{voaaut}Let $L$ be an even positive definite lattice, and $\sigma$ be an isometry of $L$ of order $n$. Let $\widehat{\sigma}$ be a standard lift to $\mathrm{Aut}(\cV_L)$. Then the order of $\widehat{\sigma}$ is also $n$, if either $n$ is odd, or if $u\cdot \sigma^{n/2}(u)$ is even for all $u\in L$.\end{lemma}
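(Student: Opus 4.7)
Since the lemma's preamble gives $\operatorname{ord}(\widehat\sigma) \in \{n, 2n\}$, the plan is to determine when the smaller order occurs. Iterating the formula $\widehat{\sigma}(e^v) = \eta_\sigma(v)\, e^{\sigma(v)}$ and using $\sigma^n = \Id$ (so that $\widehat\sigma^n$ fixes $\bbC \otimes_\Z L$) yields
$$\widehat{\sigma}^n(e^v) = P(v)\, e^v, \qquad P(v) := \prod_{i=0}^{n-1} \eta_\sigma(\sigma^i v),$$
so $\widehat\sigma$ has order $n$ iff $P \equiv 1$. First I would check, using \eqref{eta} and the telescoping $\prod_{i=0}^{n-1} \varepsilon(\sigma^{i+1}u, \sigma^{i+1}v)/\varepsilon(\sigma^i u, \sigma^i v) = 1$, that $P$ is a group homomorphism $L \to \{\pm 1\}$. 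The lemma then reduces to computing $P(v)$ and verifying its triviality under each hypothesis.

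To compute $P$, I would fix the bimultiplicative cocycle \eqref{eps}, writing $\varepsilon(u,v) = (-1)^{B(u,v)}$ for a $\Z$-bilinear $B: L \times L \to \Z/2$; by (2.2), $B(u,v) + B(v,u) \equiv u \cdot v \pmod{2}$. Then \eqref{eta} rewrites as $\eta_\sigma(u+v) = \eta_\sigma(u)\eta_\sigma(v)(-1)^{D(u,v)}$ with $D(u,v) := B(\sigma u, \sigma v) - B(u,v)$ again bilinear, and an easy induction gives $\eta_\sigma\bigl(\sum_i u_i\bigr) = \prod_i \eta_\sigma(u_i) \cdot (-1)^{\sum_{i<j} D(u_i, u_j)}$. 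Applying this to the $\sigma$-trace $f(v) := v + \sigma v + \cdots + \sigma^{n-1}v \in L^\sigma$, on which $\eta_\sigma$ equals $1$ by the defining property of the standard lift, produces
$$P(v) \;=\; (-1)^{\sum_{0 \le i < j \le n-1} D(\sigma^i v, \sigma^j v)}.$$

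The final step is to simplify this exponent. Substituting the definition of $D$ and reindexing via $\sigma^n v = v$, the double sum telescopes to $\sum_{i=1}^{n-1} [B(\sigma^i v, v) - B(v, \sigma^i v)]$, which modulo $2$ equals $\sum_{i=1}^{n-1} v \cdot \sigma^i v$ by the relation $B(u,v) + B(v,u) \equiv u \cdot v$. Because $\sigma$ preserves the inner product, $v \cdot \sigma^i v = v \cdot \sigma^{n-i} v$, so indices pair as $\{i, n-i\}$: pairs with $i \ne n-i$ contribute $2(v \cdot \sigma^i v) \equiv 0$, and only the unpaired index $i = n/2$ (which exists iff $n$ is even) survives. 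Therefore $P(v) = 1$ for all $v \in L$ exactly when $n$ is odd, or when $n$ is even and $u \cdot \sigma^{n/2}(u)$ is even for every $u \in L$, which is the lemma. The one place requiring real care is the index bookkeeping in the telescoping step — mapping $\{(i+1, j+1) : 0 \le i < j \le n-1\}$ back to the original range by exploiting $\sigma^n = \Id$ — but once that is handled, the symmetry of the inner product forces the conclusion immediately.
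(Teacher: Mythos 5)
Your argument is correct. Note that the paper does not prove this lemma at all --- it is quoted directly from Propositions 7.3 and 7.4 of \cite{EMS} --- so there is no in-paper proof to compare against; your computation is essentially the standard one found there. The key steps all check out: $\widehat\sigma^n$ fixes the Heisenberg modes since $\sigma^n=\Id$, so its order is governed by $P(v)=\prod_{i=0}^{n-1}\eta_\sigma(\sigma^iv)$; with the bimultiplicative choice \eqref{eps} the cochain $D(u,v)=B(\sigma u,\sigma v)-B(u,v)$ is bilinear (and symmetric mod $2$, since $\sigma$ is an isometry, which is what makes the expansion of $\eta_\sigma$ on a sum order-independent); evaluating $\eta_\sigma$ on the $\sigma$-invariant trace $f(v)$ and telescoping gives $P(v)=(-1)^{\sum_{i=1}^{n-1}v\cdot\sigma^iv}$, and the pairing $i\leftrightarrow n-i$ kills everything except the $i=n/2$ term when $n$ is even. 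The only implicit inputs are ones the paper already supplies: the existence of a lift with $\eta_\sigma|_{L^\sigma}=1$, and the fact that the order of the standard lift is independent of the choice of $2$-cocycle $\varepsilon$ (since different choices give conjugate lifts), which justifies working with \eqref{eps}.
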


Given an automorphism group $G$ of a pointed VOA $\cV$, we get a group homomorphism $\rho:G\to \mathrm{O}(A,q)$. The module-map is the corresponding permutation action of $G$ on the modules, i.e.\  $\rho:G\to\mathrm{Aut}(A)$.

\subsection{Realization by VOAs}

The main theme of this paper is the interplay between tensor categories and VOAs. Part of this is \textit{VOA reconstruction}: given a MTC $\mathbf{C}$, show that there exists a strongly rational VOA $\cV$ such that \textbf{Mod}$\,\,\cV\cong\mathbf{C}$ (or show that no such VOA can exist). 
A basic result, mentioned last subsection, is that any pointed MTC can be realized by a lattice VOA. Far less trivial  is that any twisted quantum double of a finite group can be realized as a VOA (Corollary 4 of \cite{EG}).

One value of realizing a MTC as a lattice VOA orbifold, is that much of the modular data can be accessible, through theta function arguments. See \cite{BEKT} for a systematic treatment, when orbifolding the lattice VOA by a cyclic group $G$. There are limitations however -- e.g.\ the theta functions in general won't be linearly independent. But when used in conjunction with categorical arguments, it can provide enough information to pin down exactly where on the $H^2$- and $H^3$-torsors the given orbifold lies (these torsors are described next subsection).

The following  illustrates a general strategy for proving VOA reconstruction:
\begin{theorem}
\label{reconstruction_theorem}
Let $\mathbf{D}$ be a MTC. Suppose that $B$ is an \'etale algebra in $\mathbf{D}$. If $\mathcal{Z}(\mathbf{Rep}_{\mathbf{D}}\, B)\cong \mathbf{Mod}\,\, \mathcal{W}$ for a strongly rational VOA $\mathcal{W}$ and $\mathbf{Rep}^{\loc}_{\mathbf{D}}\, B\cong \mathbf{Mod}\ \mathcal{V}, $ for a strongly rational VOA $\mathcal{V}$, then there exists a strongly rational VOA $\mathcal{U}$ such that $\mathbf{Mod}\,\, \mathcal{U}\cong \mathbf{D}$.
\end{theorem}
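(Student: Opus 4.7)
The plan is to recognize $\mathbf{Mod}\,\mathcal{W}$ as a Deligne product containing $\mathbf{D}$ alongside a ``mirror'' copy of $\mathbf{Mod}\,\mathcal{V}$, then cancel that mirror by tensoring the ambient VOA with $\mathcal{V}$ and extending along the canonical Lagrangian algebra of $\mathcal{Z}(\mathbf{Mod}\,\mathcal{V})$.

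The first step invokes the general factorization $\mathcal{Z}(\mathbf{Rep}_{\mathbf{D}}\,B) \cong \mathbf{D}\boxtimes (\mathbf{Rep}^{\loc}_{\mathbf{D}}\,B)^{\rev}$, the abstract form of the identification quoted in Section 2.2 for VOA extensions, valid whenever $\mathbf{D}$ is an MTC and $B$ is a connected \'etale algebra. Combined with the two hypotheses, this yields a braided tensor equivalence $\mathbf{Mod}\,\mathcal{W} \cong \mathbf{D}\boxtimes (\mathbf{Mod}\,\mathcal{V})^{\rev}$.

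Next, I would pass to the strongly rational tensor product VOA $\mathcal{W}\otimes\mathcal{V}$, whose module category factors as $\mathbf{D} \boxtimes (\mathbf{Mod}\,\mathcal{V})^{\rev} \boxtimes \mathbf{Mod}\,\mathcal{V}$. The two right-hand factors combine to (a copy of) $\mathcal{Z}(\mathbf{Mod}\,\mathcal{V})$, which admits the canonical connected \'etale ``diagonal'' algebra $R = \bigoplus_i X_i^* \boxtimes X_i$, summed over isomorphism classes of simples in $\mathbf{Mod}\,\mathcal{V}$; its local modules form $\mathbf{Vec}$ (this is the Lagrangian algebra associated to the identity tensor functor on $\mathbf{Mod}\,\mathcal{V}$). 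Setting $A := \mathbf{1}_{\mathbf{D}} \boxtimes R$ then produces a connected \'etale algebra in $\mathbf{Mod}\,(\mathcal{W}\otimes\mathcal{V})$ with $\theta(A) = 1$ and $\dim\mathrm{Hom}(\mathbf{1}, A) = 1$.

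Finally, I would apply the converse direction of the VOA extension correspondence recalled in Section 2.2 to $A$: since $\mathbf{Mod}\,(\mathcal{W}\otimes\mathcal{V})$ is pseudo-unitary, there exists a strongly rational VOA $\mathcal{U} \supseteq \mathcal{W}\otimes\mathcal{V}$ with $\mathrm{Res}\,\mathcal{U} = A$, and $\mathbf{Mod}\,\mathcal{U}$ is precisely the category of local $A$-modules. Because $A$ factors as a Deligne product with tensor unit in the $\mathbf{D}$-slot, this category splits as $\mathbf{D} \boxtimes \mathbf{Rep}^{\loc}(R) \cong \mathbf{D} \boxtimes \mathbf{Vec} \cong \mathbf{D}$. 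The main technical obstacle is precisely this last compatibility: verifying that the local-modules functor distributes over Deligne products when one tensor factor of the algebra is the unit. This amounts to a careful bookkeeping of half-braidings and should be routine, but is the one spot where care is genuinely needed.
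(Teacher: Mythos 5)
Your proposal is correct and follows essentially the same route as the paper: both rewrite $\mathbf{Mod}\,(\mathcal{W}\otimes\mathcal{V})\cong\mathbf{D}\boxtimes\mathcal{Z}(\mathbf{Mod}\,\mathcal{V})$ and then extend along the unit tensored with a Lagrangian algebra in the centre whose local modules are $\mathbf{Vec}$. The only difference is cosmetic — you write that algebra out explicitly as $\bigoplus_i X_i^*\boxtimes X_i$, whereas the paper obtains it by citing Proposition 7.1 of \cite{FFRS}.
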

\begin{proof} Write $\mathbf{C}= \mathbf{Rep}^{\loc}_{\mathbf{D}}\, B$; by assumption it is an MTC. 
First, recall that by Proposition \ref{Z(CG)} we know that 
\[\hspace{-0.25cm}\mathbf{Mod}\,( \mathcal{W}\otimes \mathcal{V})=\mathbf{Mod}\,\, \mathcal{W}\boxtimes\mathbf{Mod}\,\, \mathcal{V}\cong 
\mathcal{Z}(\mathrm{\textbf{Rep}}_{\mathbf{D}}\, B)\boxtimes \mathbf{C}\cong (
\mathbf{D}\boxtimes \mathbf{C}^{\mathrm{rev}})\boxtimes \mathbf{C}\cong \mathbf{D}
\boxtimes \mathcal{Z}(\mathbf{C})
\]
Applying \cite[Proposition 7.1]{FFRS} we know there exists a $\mathcal{Z}(\mathbf{ C})$-algebra $T$ such that $\mathrm{\textbf{Rep}}_{\mathcal{Z}(\mathbf{ C})}^{\loc}\, T \cong \mathbf{Vec}$.
On the other hand,  $\cW\otimes\cV$ is also strongly rational; we see that $1\boxtimes T$ will be an \'etale algebra in $\mathbf{D}
\boxtimes \mathcal{Z}(\mathbf{C})\cong\mathrm{\textbf{Mod}}(\mathcal{W}\otimes \mathcal{V})$. By VOA extension theory, we see that $\mathbf{D}\cong \mathrm{\textbf{Mod}}\,\,\mathcal{U}$ for the VOA extension $\mathcal{U}$ of $\mathcal{W}\otimes \mathcal{V}$ corresponding to $1\boxtimes T$. 
\end{proof}

For the applications we have in mind this paper, $B=B_G$ is a copy of the regular representation of $G$ in $\mathbf{D}$, so $ \mathbf{Rep}_{\mathbf{D}}\, B$ is a braided $G$-crossed extension of $ \mathbf{Rep}^{\loc}_{\mathbf{D}}\, B$.
However, Theorem \ref{reconstruction_theorem} doesn't go as far as we would like. We would like to realize $\mathcal{U}$ as a $G$-orbifold, i.e.\ find a strongly rational VOA $\mathcal{U}'$, on which $G$ acts by VOA-automorphisms, such that $\mathbf{Mod}\,\,\mathcal{U}^{\prime\, G}\cong\mathbf{D}$ and $\mathbf{TwMod}_G\,\mathcal{U}'\cong\mathbf{Rep}_{\mathcal{D}}\,B$. Compare Theorem \ref{reconstruction_theorem} with the stronger Theorems \ref{doubleVOA} and \ref{recon}, Corollaries \ref{recon_gpth} and \ref{TYeven}, and Lemma \ref{L:fpf} below.

\subsection{Etingof-Nikshych-Ostrik gauging theory}

The categorical interpretation of the orbifold construction is called \textit{gauging}. Let $\textbf{C}$ be a MTC. Suppose we are given an  action of $G$ on $\textbf{C}$ which respect the braiding, i.e. a group homomorphism $\rho : G \to \mathrm{EqBr}(\textbf{C})$. To gauge this action, we first require a compatible braided $G$-crossed extension $\textbf{D}$ of $\textbf{C}$. This may not exist, and if it does it is usually not unique. Given any such $\mathbf{D}$, we can then equivariantize by the associated action of $G$, to produces a new MTC $\mathbf{D}^G$ called a gauging of $\textbf{C}$ by $G$. If $\mathbf{Mod}\,\,\cV\cong\mathbf{C}$, then $\mathbf{Mod}\,\,\cV^G$ will be braided tensor equivalent to some $\mathbf{D}^G$.

However, associated to that initial homomorphism $\rho : G \to \mathrm{EqBr}(\mathbf{C})$ are two cohomological obstructions to finding such a  braided $G$-crossed category \cite{ENO} (see also \cite{CGPW,DaN}). The first, $o_3(\rho)$,  lives in $H^3(G,\mathrm{Inv}(\mathbf{C}))$, where Inv$(\mathbf{C})$ are the invertible objects (or simple currents) in $\mathbf{C}$. $o_3(\rho)$ vanishes iff the group homomorphism $\rho$ can be lifted into a categorical action $\underline{\rho}$, in which case the possible liftings form a torsor over $H^2(G,\mathrm{Inv}(\textbf{C}))$. Now choose any such lifting $\underline{\rho}$. It in turn can be lifted to a braided $G$-crossed extension of $\mathbf{C}$, iff another obstruction $o_4(\underline{\rho})\in H^4(G,\bbC^\times)$ vanishes. If $o_4(\underline{\rho})$ also vanishes, then the possible braided $G$-crossed extensions of $\mathbf{C}$ coming from $\underline{\rho}$ form a torsor over $H^3(G,\bbC^\times)$. Incidentally, this $H^3$-torsor is the twisting of associativity in graded categories, described in Section 2.1.

 As we will see, it is often relevant that  Eilenberg-Mac Lane theory (see e.g. Chapter IV of \cite{Mac}) uses $H^2(G,A)$ to describe the  possible abelian extensions $\Gamma$ of $G$ by $A$:
\begin{equation} 0\to A\hookrightarrow\Gamma\to G\to 1\label{gpext}\end{equation}
(Throughout the paper we  use additive notation for $A$ and multiplicative for $G$.)
The interpretation of the $H^2$ torsor using zesting is explored in \cite{zest} though doesn't play a role here.

Suppose we are given any pointed VOA $\cV$, with \textbf{Mod}$\,\,\cV\cong\mathbf{Vec}_A^q$, and some group $G$ of automorphisms of $\cV$. Then this corresponds to some orthogonal map $\rho\in \mathrm{O}(A,q)$. We know, by Theorem \ref{voathm}, that there is an associated braided $G$-crossed extension of $\mathbf{Vec}_A^q$. Thus the first obstruction $o_3(\rho)$ must vanish, as must the second, $o_4(\underline{\rho})$ for some lift of $\rho$.
\textit{This is the main value to tensor categories of VOA orbifold theory:} the existence of certain tensor categories, even when the cohomology groups in which the obstructions live are large.

For future reference, if $|G|$ is coprime to $|A|$, then $H^n(G,A)=0$ for all $n>0$. When $G=\Z/n$ acts trivially on $A$, then $H^2(\Z/n,A)\cong A/nA$ and $H^3(\Z/n,A)$ consists of the elements of $A$ of order dividing $n$. Also, $H^3(\Z/n,\bbC^\times)\cong\Z/n$ and  $H^4(\Z/n,\bbC^\times)=0$. 

\subsection{Realizations by Hopf-like algebras}

Any fusion category is tensor equivalent to the category of finite-dimensional representations of a weak Hopf algebra (Corollary 2.22 of \cite{ENO0}). In a weak Hopf algebra, the comultiplication may not be unit-preserving and the counit may not be an algebra homomorphism. Weak Hopf algebras are much less easy to work with than Hopf algebras. 

In order for the representations of a (weak) Hopf algebra to have a braiding, the extra structure needed is an $R$ matrix, and such algebras are called quasitriangular. If their category also possesses a ribbon structure (needed to recover an MTC), they are called ribbon.

When the fusion category is integral, then it is equivalent to the representation category of a finite-dimensional quasi-Hopf algebra (Proposition 6.1.14 of \cite{book}).  Quasi-Hopf algebras are much more amenable than weak Hopf algebras. For example, \textbf{Vec}$_G^\omega$ is realized by the commutative quasi-Hopf algebra of $\bbC$-valued functions on $G$ twisted by $\omega$ (see e.g.\ Example 5.13.6 in \cite{book}).

\section{Warm-up: Holomorphic orbifolds}

In this section we consider  the baby case where $\cV$ is holomorphic, i.e.\ \textbf{Mod}$\,\,\cV$ is \textbf{Vec}. Dijkgraaf--Witten \cite{DW} describe how the corresponding topological field theory should look, and \cite{DPR} constructed the quasi-Hopf algebra $D^\omega(G)$ whose representations captured that theory. (More precisely, the finite-dimensional representations of  $D^\omega(G)$ form the category  $\cZ(\mathbf{Vec}_G^{\omega^{-1}})$.) By the \textit{Dijkgraaf--Witten conjecture} we mean the statement that \textbf{Mod}$\,\,V^G$ is tensor equivalent to $\cZ(\mathbf{Vec}_G^\omega)$ for some $\omega\in Z^3(G,\bbC^\times)$. This conjecture was recently proved (Theorem 6.2 of \cite{DNR}), after a special case was established much earlier in \cite{Kir0}. We supply a simpler proof in this section (in fact we prove a little more), and next section apply the argument in a much more general setting.

In the physics literature, the 3-cocycle $\omega\in Z^3(G,\bbC^\times)$ is often called the \textit{gauge anomaly}, but this seems a bit of a misnomer. It is generally nontrivial. For example, in Monstrous Moonshine (where $\cV=\cV^\natural$ is the moonshine module and $G$ is the Monster finite group $\mathbb{M}$), $[\omega]$ has order 24 in $H^3(\mathbb{M},\bbC^\times)$ \cite{JF}. For another example, $G=\bbZ/2$ acts on the $E_8$ root lattice VOA in two inequivalent ways: one gives $\cV_{D_8}$ and the other gives $\cV_{A_1\oplus E_7}$. These correspond to $[\omega]$  trivial resp.\ nontrivial in $H^3(\bbZ/2,\bbC^\times)\cong\bbZ/2$.

\subsection{In and around the Dijkgraaf-Witten conjecture}

We begin with our proof of the Dijkgraaf-Witten conjecture.
\begin{theorem} \label{dw} Let $\cV$ be a holomorphic VOA, and $G$ a finite group of automorphisms of $\cV$. Assume the fixed-point VOA $\cV^G$ is strongly rational. Then the category $\mathbf{TwMod}_G\,\cV$ of twisted modules is tensor equivalent to $\mathbf{Vec}_G^\omega$ for some $\omega\in Z^3(G,\bbC^\times)$, and $\mathbf{Mod}\,\,\cV^G\cong\cZ(\mathbf{Vec}_G^\omega)$.\end{theorem}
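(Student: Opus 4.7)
The plan is to show that holomorphicity forces $\mathbf{TwMod}_G\,\cV$ to be pointed with Grothendieck ring $\bbZ G$, hence isomorphic to $\mathbf{Vec}_G^\omega$ for some $\omega$, and then to identify $\mathbf{Mod}\,\cV^G$ as a Drinfeld center via Propositions \ref{rankDg} and \ref{Z(CG)}. Essentially, the ambiguity in the gauging procedure collapses to the 3-cocycle $\omega$ precisely because the starting MTC is trivial.

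By Theorem \ref{voathm}, $\mathbf{TwMod}_G\,\cV$ is a faithful braided $G$-crossed extension of $\mathbf{Mod}\,\cV \cong \mathbf{Vec}$; faithfulness, i.e.\ the existence of a $g$-twisted module for every $g \in G$, is a standard input from VOA orbifold theory. Since $\mathbf{Vec}$ has a single simple object (the unit), trivially fixed by every $g$, Proposition \ref{rankDg} yields $\mathrm{rank}\bigl((\mathbf{TwMod}_G\,\cV)_g\bigr) = 1$ and $\mathrm{FPdim}\bigl((\mathbf{TwMod}_G\,\cV)_g\bigr) = \mathrm{FPdim}(\mathbf{Vec}) = 1$ for each $g \in G$. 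Hence each graded component contains a unique simple object of FPdim $1$, which is therefore invertible. So $\mathbf{TwMod}_G\,\cV$ is pointed, its invertible simples are indexed by $G$ through the grading, and its Grothendieck ring is $\bbZ G$. By the classification of pointed fusion categories recalled in Section 2.3, $\mathbf{TwMod}_G\,\cV \cong \mathbf{Vec}_G^\omega$ as fusion categories, for some $\omega \in Z^3(G, \bbC^\times)$. This settles the first half of the theorem.

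For the second half, Theorem \ref{voathm} identifies $\mathbf{Mod}\,\cV^G$ with the equivariantization $(\mathbf{TwMod}_G\,\cV)^G$. Apply Proposition \ref{Z(CG)} to $\mathbf{D} = \mathbf{TwMod}_G\,\cV$ with $\mathbf{C} = \mathbf{D}_e \cong \mathbf{Vec}$: since $\mathbf{Vec}^{\rev} \cong \mathbf{Vec}$ acts trivially under Deligne product, the proposition collapses to $\cZ(\mathbf{TwMod}_G\,\cV) \cong (\mathbf{TwMod}_G\,\cV)^G$. The Drinfeld center depends only on the underlying tensor structure, so the fusion equivalence $\mathbf{TwMod}_G\,\cV \cong \mathbf{Vec}_G^\omega$ transports to $\cZ(\mathbf{TwMod}_G\,\cV) \cong \cZ(\mathbf{Vec}_G^\omega)$. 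Combining, $\mathbf{Mod}\,\cV^G \cong \cZ(\mathbf{Vec}_G^\omega)$, as required.

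The conceptual heart of the argument lives in the second paragraph: once holomorphicity is parlayed into the pointed structure via Proposition \ref{rankDg}, the remainder is a mechanical application of the general equivariantization machinery. The main thing to keep honest is faithfulness of the $G$-crossed extension, i.e.\ that nontrivial $g$-twisted modules exist for every $g$; I would cite the relevant twisted-module existence results from the orbifold literature rather than reproving this. Notably, this argument gives no information about \emph{which} class $[\omega]$ appears; that is a genuinely subtler question addressed later in the paper.
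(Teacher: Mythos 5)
Your proposal is correct and follows essentially the same route as the paper: Proposition \ref{rankDg} gives a unique invertible simple in each graded component, forcing $\mathbf{TwMod}_G\,\cV\cong\mathbf{Vec}_G^\omega$ with the grading identifying the group as $G$, and Proposition \ref{Z(CG)} combined with Theorem \ref{voathm} then collapses to $\cZ(\mathbf{Vec}_G^\omega)\cong\mathbf{Mod}\,\,\cV^G\boxtimes(\mathbf{Mod}\,\,\cV)^{\rev}\cong\mathbf{Mod}\,\,\cV^G$. Your explicit attention to faithfulness of the grading is a reasonable extra precaution that the paper leaves implicit.
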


\begin{proof}  
 By Proposition \ref{rankDg} above, for each $g\in G$ there is a unique simple $g$-twisted module. Denote it by $M^g$, and note $\mathrm{FPdim}(M^g)=1$. This uniqueness applied to the $G$ grading forces the fusions $M^g\otimes M^h\cong M^{gh}$, so \textbf{TwMod}$_G\,\cV$ is pointed. This means that as a fusion category, \textbf{TwMod}$_G\,\cV$ must be tensor equivalent to \textbf{Vec}$^\omega_H$ for some group $H$ and $[\omega]\in H^3(H,\C^\times)$. Of course the two gradings must match, again by uniqueness, which forces $H\cong G$. So \textbf{TwMod}$_G\,\cV$ is \textbf{Vec}$^\omega_G$.  Then Proposition \ref{Z(CG)} in combination with Theorem \ref{voathm} forces
$$\cZ(\mathbf{Vec}_G^\omega)\cong\mathbf{Mod}\,\,\cV^G\boxtimes (\mathbf{Mod}\,\,\cV)^{\rev}\cong \mathbf{Mod}\,\,\cV^G.$$
\end{proof}

Up to isomorphism, the simple objects of $\cZ(\mathbf{Vec}_G^\omega)$ are parametrized by pairs $[g,\chi]$ where $g$ is the representative of a conjugacy class of $G$,  and $\chi$ runs through the irreducible projective characters Irr$_{\theta_g}(C_G(g))$ of  the centralizer $C_G(g)$, for the 2-cocycle 
\begin{equation}\label{thetaform}\theta_g(h,k)=\frac{{\omega}(g,h,k)\,{\omega}(h,k,g)}{{\omega}(h,g,k)}\ \ \forall h,k\in C_G(g)\end{equation}
Letting $M^g$ denote the unique simple $g$-twisted module, we have $$\mathrm{Res}_{\cV^G}\,M^g\cong\sum_{\chi\in\mathrm{Irr}_{\theta_g}(C_G(g))}\mathrm{dim}\,\chi\,\,[g,\chi]$$ as is well-known, hence  induction (its adjoint functor) Ind$\,[g,\chi]=\sum_{g'\in K_g}\mathrm{dim}\,\chi\,M^{g'}$, where the sum is over the elements in the conjugacy class of $g$.

Now turn to VOA reconstruction. The hard part, finding a VOA $\cV$ with $\mathbf{Mod}\,\,\cV\cong\cZ(\mathbf{Vec}_G^\omega)$,  was accomplished in Corollary 4 of \cite{EG}. That result, and the following one, assumes Conjecture  \ref{conj} in the special case of $\cV$ holomorphic. If $G$ is solvable, the conjecture is unnecessary. If one replaces VOA in the statement of the Theorem with conformal net of factors on $S^1$, the conjecture is unnecessary for any $G$.

\begin{theorem} \label{doubleVOA} Assume Conjecture \ref{conj}. For any finite group $G$ and any cocycle $\omega\in Z^3(G,\C^\times)$, a holomorphic VOA $\cV$ can be found such that $G$ acts on $\cV$ as VOA automorphisms and both $\mathbf{Mod}\,\,\cV^G\cong \cZ(\mathbf{Vec}_G^\omega)$ and $\mathbf{{TwMod}}_G\,\cV\cong\mathbf{Vec}_G^\omega$.\end{theorem}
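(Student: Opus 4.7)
The plan is to construct $\cV$ as a holomorphic extension of a VOA whose module category realizes $\cZ(\mathbf{Vec}_G^\omega)$, with the desired $G$-action coming from algebra automorphisms of the extension. By Corollary 4 of \cite{EG} there exists a strongly rational VOA $\cW$ with $\mathbf{Mod}\,\cW\cong \cZ(\mathbf{Vec}_G^\omega)$. Under this equivalence, the canonical Tannakian subcategory $\mathbf{Rep}\,G\hookrightarrow \cZ(\mathbf{Vec}_G^\omega)$ yields a regular-representation \'etale algebra $B_G$ in $\mathbf{Mod}\,\cW$ with $\dim\Hom(1,B_G)=1$ and $\theta(B_G)=1$. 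I would then apply the VOA extension theory of Section 2.2 to $B_G$ to obtain a strongly rational VOA $\cV\supset \cW$ with $\mathrm{Res}_\cW\,\cV=B_G$ and $\mathbf{Mod}\,\cV\cong \mathbf{Rep}^{\loc}_{\mathbf{Mod}\,\cW}\,B_G$. The crucial input here is that $B_G$ is Lagrangian in $\cZ(\mathbf{Vec}_G^\omega)$, i.e.\ $\mathbf{Rep}^{\loc}_{\cZ(\mathbf{Vec}_G^\omega)}\,B_G\cong \mathbf{Vec}$; this is standard for Drinfeld centres of pointed fusion categories and amounts to $\mathbf{Rep}\,G$ being a Lagrangian subcategory of its own double. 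Hence $\cV$ is holomorphic.

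The required $G$-action on $\cV$ is then supplied by the algebra automorphisms of $B_G$ that fix $\cW$ pointwise: by the remark following Theorem \ref{voathm} (citing Proposition 3.2(iv) of \cite{MuA}) this group is exactly $G$. With this action in place, Theorem \ref{voathm} identifies $\mathbf{TwMod}_G\,\cV$ with $\mathbf{Rep}_{\mathbf{Mod}\,\cV^G}\,B_G$, so once $\cV^G=\cW$ is established we obtain $\mathbf{TwMod}_G\,\cV\cong \mathbf{Rep}_{\cZ(\mathbf{Vec}_G^\omega)}\,B_G\cong \mathbf{Vec}_G^\omega$, the last equivalence being the standard de-equivariantization identity for Drinfeld doubles (which is exactly how $\cZ(\mathbf{Vec}_G^\omega)$ was defined).

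The main obstacle is precisely this equality $\cV^G=\cW$, rather than merely $\cW\subseteq \cV^G$. The clean argument is that the constructed $G$-action restricts to an action on $B_G=\mathrm{Res}_\cW\,\cV$ by algebra automorphisms fixing $\cW$, which realizes $B_G$ as the regular representation of $G$ inside $\mathbf{Mod}\,\cW$; the $G$-invariants of the regular representation are one-dimensional, equal to $\Hom_{\mathbf{Mod}\,\cW}(1,B_G)\cong\cW$, which forces $\cV^G=\cW$. Assembling everything yields $\mathbf{Mod}\,\cV^G\cong \cZ(\mathbf{Vec}_G^\omega)$ and $\mathbf{TwMod}_G\,\cV\cong \mathbf{Vec}_G^\omega$ simultaneously.
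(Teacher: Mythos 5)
Your proposal is correct and follows essentially the same route as the paper's proof: invoke Corollary 4 of \cite{EG} to get $\cW$ with $\mathbf{Mod}\,\,\cW\cong\cZ(\mathbf{Vec}_G^\omega)$, extend along the Lagrangian regular-representation algebra $B_G$ to get a holomorphic $\cV$, obtain the $G$-action from Proposition 3.2(iv) of \cite{MuA}, and identify $\mathbf{TwMod}_G\,\cV$ with the de-equivariantization via Theorem \ref{voathm}. Your explicit justification that $\cV^G=\cW$ (the $G$-invariants of the regular representation being the one-dimensional multiplicity space of $[e,1]$) is a point the paper states without elaboration, and is a welcome addition.
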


\begin{proof} Corollary 4 of \cite{EG} tells us there exists a strongly rational  VOA $\cW$ such that $\mathbf{Mod}\,\,\cW\cong\cZ(\mathbf{Vec}_G^\omega)$. Moreover, the conformal weights $h_M$ are positive for all simple $\cW$-modules $M\not\cong\cW$.  In any $\cZ(\mathbf{Vec}_G^\omega)$, there is  an \'etale algebra structure on $B_G=\oplus_{\chi\in\mathrm{Irr}(G)}\mathrm{dim}(\chi)\,[e,\chi]$  (see e.g.\ Remark 3.2(b) of \cite{drinfeld}). It is Lagrangian, in the sense that $\mathbf{Rep}^{\mathrm{loc}}_{\cZ(\mathbf{Vec}_G^\omega)}\,B\cong\mathbf{Vec}$. This then corresponds to  a holomorphic VOA extension $\cV$ of $\cW$ with  $\mathbf{Res}_\cW\,\cV=B_G$.

How $G$ acts on $\mathcal{V}$ is given by that formula for $B_G$. By Proposition 3.2(iv) of \cite{MuA}, this action of $G$ on $\cV$ is indeed by VOA automorphisms.
In particular, the fixed points  $\mathcal{V}^G$ will be $[e,1]=\mathcal{W}$. By Theorem \ref{voathm}, $\mathbf{TwMod}_G\,\cV$ can be identified with the de-equivariantization of $\cZ(\cW)\cong\cZ(\mathbf{Vec}_G^\omega)$, i.e.\ with $\mathbf{Vec}_G^{\omega}$. \end{proof}

The ENO obstruction $o_3$ trivially vanishes here, and the torsor over $H^2$ is also trivial. The obstruction $o_4$ also must vanish, because $\mathbf{Vec}_G^\omega$ has a braided $G$-crossed structure. This can be seen using either reconstruction Theorem \ref{doubleVOA}, or de-equivariantization of $\cZ(\mathbf{Vec}_G^\omega)$. The torsor over $H^3$ corresponds to varying $\omega$.

All dimensions here are integers, so $\cZ(\mathbf{Vec}_G^\omega)$ will be the category of representations of a quasi-Hopf algebra. This twisted quantum double of $G$ was constructed in \cite{DPR}. It was through this quasi-Hopf algebra that the modular data of $\cZ(\mathbf{Vec}_G^\omega)$ was computed \cite{CGR}.

Tensor category theory has further consequences for VOAs. For example, Theorem 1 of \cite{EG} classifies all VOA extensions $\mathcal{W}$ of a holomorphic orbifold $\cV^G$ (i.e.\ all \'etale algebras in $\mathcal{Z}(\mathbf{Vec}_G^\omega)$). Corollary 2 there classifies all extensions $\cV^G\subset\mathcal{W}$ where $\mathcal{W}$ is also holomorphic. Now, the important paper \cite{EMS} addresses the classification of holomorphic VOAs at central charge $c=24$, using $\Z/n$ orbifolds of known holomorphic VOAs. Much of their Section 5 is an immediate corollary of the fact that \textbf{Mod}$\,\,\cV^{\Z/n}\cong\mathcal{Z}(\mathbf{Vec}_{\Z/n}^\omega)$. The more recent paper \cite{GK} constructs special holomorphic VOAs at $c=48$ and 72, this time using orbifolds by nonabelian (metacyclic) groups. They do apply some tensor category machinery to greatly simplify their analysis.    

\subsection{Consequences of a nontrivial twist $\omega$}

How can the gauge anomaly $\omega$ be detected?
There are several consequences of having a nontrivial  $\omega$. Many of these are discussed with examples in \cite{CGR}.

The most obvious is to the ribbon twist $\theta(M)=e^{2\pi \mathrm{i} h_M}$ for $\cV^G$-modules $M$. For any $\omega$, $\theta([g,\chi])=\frac{\chi(g)}{\chi(e)}$,  is a root of unity. When $[\omega]=[1]$ in $H^3(G,\bbC^\times)$, the order of each such root of unity will divide the exponent of $G$, but this often fails when  $[\omega]\ne[1]$. For example, when $G=\Z/n$, $H^3(G,\bbC^\times)\cong\Z/n$, which we can parametrize by $\omega_q$ for $q\in\Z/n$ as in (6.1) of  \cite{CGR}.  The simples of $\cZ(\mathbf{Vec}_{\Z/n}^{\omega_q})$ can be parametrized by $[a,\ell]$, $a\in\Z/n$, $\ell\in\widehat{\Z/n}$ for any $q$. Then
$$\theta([a,\ell])=\exp(2\pi i(qa^2+n a\ell)/n^2)$$
So if $\omega_q$ has order $n$ (i.e.\ $q$ is coprime to $n$), then some simples $[a,\ell]$ will have a twist $\theta([a,\ell])$ of order exactly $n^2$, rather than $n$.

 $\omega$ affects the $S$-matrix of $\mathbf{Mod}\,\,\cV^G$ too, as well as the modularity of the twisted twining characters of \textbf{TwMod}$_G\,\cV$. Let $M^g$ be the unique  simple $g$-twisted $\cV$-module, and define $Z_{g,h}(\tau)=\mathrm{Tr}_{M^g}q^{L_0-c/24}$ for any $h\in G$ with $M^{h^{-1}gh}\cong M^g$ (so in particular $h$ commutes with $g$). Then if $[\omega]=[1]$, these are permuted by the modular group SL$_2(\bbZ)$:  $$Z_{g,h}\left(\frac{a\tau+b}{c\tau+d}\right)=Z_{g^ah^c,g^bh^d}(\tau)$$
 But if $[\omega]\ne [1]$, then the right-side must be multiplied by some root of unity depending on $\omega$. For example, the modularity of these twisted twining characters for Monstrous Moonshine involve 24th roots of unity, so the gauge anomaly $[\omega]$ there   must be order at least 24. We now know \cite{JF} its order is exactly 24.
 
 The fusion rules of $\mathbf{Mod}\,\,\cV^G$ can change with $\omega$ (in contrast to those of $\mathbf{Vec}_G^\omega$ which are unchanged). For example, for $\Z/n$, the fusion ring of $\cZ(\mathrm{Vec}_{\Z/n}^\omega)$ is the group ring of $\Z/d\times\Z/(n^2/d)$, where $d$ is the gcd of $n$ with twice the order of $[\omega]$. For example, $\mathfrak{sl}(n^2)$ at level $k=1$ (with cyclic fusions $\Z[\Z/n^2]$) can be realized as a holomorphic orbifold by $\Z/n$, i.e.\ as $\cZ(\mathrm{Vec}_{\Z/n}^\omega)$
for some $\omega$ of maximal order.
 
 In fact the numbers of simples can decrease for $[\omega]\ne[1]$ compared to $[\omega]=[1]$. For example, though this won't happen for $G=\Z/n$ nor $(\Z/n)^2$, it can for  $(\Z/n)^3$. A simple example is $G=(\Z/2)^3$, which for an appropriate $\omega\in H^3((\Z/2)^3,\bbC^\times)\cong(\Z/2)^7$ recovers $\cZ(\mathbf{Vec}_{D_4}^\omega)$ for dihedral $D_4$, which has 22 simples (instead of the $(2^3)^2=64$ we would have expected).
 
 The numbers of VOA extensions of $\cV^G$ (equivalently, the \'etale algebras of $\cZ(\mathbf{Vec}_G^\omega)$)  typically decreases for nontrivial $[\omega]$. For example (see Table 1 in \cite{EG}),   $\cZ(\mathbf{Vec}_{\mathrm{Sym}(3)}^\omega)$ has exactly 8, 6, 5, 4 \'etale algebras when $[\omega]$ has order 1, 2, 3, 6 respectively in $H^3(\mathrm{Sym}(3),\bbC^\times)\cong\bbZ_6$. A simpler example: $(\cV_{E_8})^{\Z/2}$ has either 2 or 1 nontrivial VOA extensions, depending on which $\Z/2$ subgroup is chosen, as they have different $\omega$.

\section{Trivial module-maps and quantum cleft extensions}

Let $\cV$ now be any pointed VOA, and write \textbf{Mod}$\,\,\cV=\mathbf{Vec}_A^q$ for some abelian group $A$ and non-degenerate quadratic form $q$. Let $G$ be a finite group of automorphisms of $\cV$, and let $\rho:G\to \mathrm{O}(A,q)\le\mathrm{Aut}(A)$ be the corresponding module-map.
We will see this section that the arguments of Section 3 generalize naturally and completely to the situation where $\rho$ is the trivial map (i.e.\ $\rho(g)=\mathrm{Id}_A$ $\forall g\in G$). In this case we say  $G$ has a \textit{trivial module-map}. 
 The  Dijkgraaf-Witten story studied in Section 3 is the special case corresponding to $A=0$.

Trivial module-maps  are fairly common: e.g.\ any $\cV$-automorphism of prime order $\ge|A|$ must fix $A$ pointwise. This necessarily happens when $|A|\le 2$ (last section we studied the case where $|A|=1$). The $\rho$ trivial situation has been studied already in the literature. For example, Naidu \cite{Naidu} and Mason--Ng \cite{MN}   introduced finite-dimensional quasi-Hopf algebras. We prove these are essentially equivalent, and their representations recover the MTC \textbf{Mod}$\,\,\cV^G$ in this trivial module-map situation.

\subsection{The relevant tensor categories}

\begin{theorem} \label{stable} Suppose $\mathbf{D}$ is a braided $G$-crossed extension of a MTC $\mathbf{Vec}_A^q=\mathbf{Vec}_A^{(\omega,c)}$, with trivial module-map $\rho:G\to\mathrm{O}(A,q)$. Then $\mathbf{D}=\mathbf{Vec}_\Gamma^{\tilde{\omega}}$ for some central extension $\Gamma$ of $G$ by $A$, and $[\widetilde{\omega}|_A]=[\omega]$ in $H^3(A,\bbC^\times)$. Moreover, $(\mathbf{Vec}_A^q)^{\mathrm{rev}}$ is braided tensor equivalent to a full subcategory in $Z(\mathbf{Vec}_\Gamma^{\tilde{\omega}})$, and the equivariantization $\mathbf{D}^G$  is braided tensor equivalent to the M\"uger  centralizer of $ (\mathbf{Vec}_A^q)^{\mathrm{rev}}$ in $\cZ(\mathbf{Vec}_\Gamma^{\tilde{\omega}})$: $Z(\mathbf{Vec}_\Gamma^{\tilde{\omega}})\cong(\mathbf{Vec}_A^q)^{\mathrm{rev}}\boxtimes \mathbf{D}^G$.
\end{theorem}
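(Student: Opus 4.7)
The strategy will parallel the proof of Theorem \ref{dw} (the holomorphic case, where $A=0$): force $\mathbf{D}$ to be pointed by a rank-plus-dimension count, identify the resulting group $\Gamma$ via the $G$-grading, and then invoke Proposition \ref{Z(CG)} for the final centralizer statement.

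First I would show $\mathbf{D}$ is pointed. Since the module-map $\rho$ is trivial, ${}^g M \cong M$ for every simple $M\in\mathbf{Vec}_A^q$ and every $g\in G$. Proposition \ref{rankDg} then yields $\mathrm{rank}(\mathbf{D}_g)=|A|$ and $\mathrm{FPdim}(\mathbf{D}_g)=|A|$ for each $g\in G$. Since $\sum_{X}\mathrm{FPdim}(X)^2=|A|$ with exactly $|A|$ simple summands $X\in\mathbf{D}_g$, each simple has $\mathrm{FPdim}(X)=1$. Hence $\mathbf{D}$ is pointed with $|A|\cdot|G|$ invertible simples, and so $\mathbf{D}\cong\mathbf{Vec}_\Gamma^{\tilde{\omega}}$ as a fusion category for some group $\Gamma$ of that order and some $\tilde{\omega}\in Z^3(\Gamma,\bbC^\times)$.

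Next I would identify the group structure. The $G$-grading on $\mathbf{D}$ supplies a surjection $\Gamma\twoheadrightarrow G$ whose kernel is the set of (iso-classes of) simples of $\mathbf{D}_e=\mathbf{Vec}_A^q$, namely $A$, producing a short exact sequence $1\to A\to\Gamma\to G\to 1$. The inclusion $\mathbf{D}_e\hookrightarrow\mathbf{D}$ is $\mathbf{Vec}_A^{\tilde{\omega}|_A}\hookrightarrow\mathbf{Vec}_\Gamma^{\tilde{\omega}}$ and matches $\mathbf{Vec}_A^{(\omega,c)}=\mathbf{Vec}_A^\omega$ as a fusion category, forcing $[\tilde{\omega}|_A]=[\omega]$ in $H^3(A,\bbC^\times)$. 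For centrality, pick a simple $\tilde{g}\in\mathbf{D}_g$ and $X_a\in\mathbf{D}_e$; the $G$-crossed braiding furnishes an isomorphism $c_{\tilde{g},X_a}\colon\tilde{g}\otimes X_a\to{}^gX_a\otimes\tilde{g}$, and since ${}^gX_a\cong X_a$ by the triviality of $\rho$, we conclude $\tilde{g}\otimes X_a\cong X_a\otimes\tilde{g}$ as objects, i.e.\ $a$ and $\tilde{g}$ commute in $\Gamma$. Thus $A$ is central in $\Gamma$.

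Finally, the remaining claims follow immediately from Proposition \ref{Z(CG)} applied to $\mathbf{D}\cong\mathbf{Vec}_\Gamma^{\tilde{\omega}}$ with $\mathbf{D}_e\cong\mathbf{Vec}_A^q$: one obtains an injective braided tensor embedding $(\mathbf{Vec}_A^q)^{\rev}\hookrightarrow\mathcal{Z}(\mathbf{Vec}_\Gamma^{\tilde{\omega}})$ whose M\"uger centralizer is $\mathbf{D}^G$, and the Deligne factorization $\mathcal{Z}(\mathbf{Vec}_\Gamma^{\tilde{\omega}})\cong(\mathbf{Vec}_A^q)^{\rev}\boxtimes\mathbf{D}^G$. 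I expect the centrality step to be the main obstacle, since it requires a careful bookkeeping of how the $G$-crossed braiding interacts with the (only up-to-isomorphism) permutation action on simples; everything else is counting and direct invocation of the cited propositions.
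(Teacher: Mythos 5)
Your proposal is correct and follows essentially the same route as the paper: Proposition \ref{rankDg} forces each graded component to consist of $|A|$ invertibles so that $\mathbf{D}\cong\mathbf{Vec}_\Gamma^{\tilde{\omega}}$, the grading gives the extension $1\to A\to\Gamma\to G\to 1$ with $[\tilde{\omega}|_A]=[\omega]$, and Proposition \ref{Z(CG)} supplies the final factorization. Your explicit use of the $G$-crossed braiding to derive centrality of $A$ merely spells out what the paper states tersely as ``since $G$ acts trivially on $A$, $A$ must lie in the centre of $\Gamma$,'' so there is no substantive difference.
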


\begin{proof} Choose any $g\in G$. By  Lemma \ref{rankDg}, for any $g\ne e$ in $G$, the  number of $g$-graded simples in \textbf{D} equals $|A|$, which also equals FPdim$(\mathbf{D}_g)$. Since these are equal,   the Frobenius-Perron dimension of every simple in $\mathbf{D}_g$ must be 1, i.e.\ every simple in $\mathbf{D}_g$ must be invertible. Thus $\mathbf{D}$ is a pointed fusion category, so equals $\mathbf{Vec}_\Gamma^{ \tilde{\omega}}$ for some finite group $\Gamma$ and some 3-cocycle $\widetilde{\omega}\in Z^3(\Gamma,\bbC^\times)$. For each $\gamma\in\Gamma$ let $X^\gamma$ denote the corresponding simple object of $\mathbf{D}$.

Let $\partial$ give the grading $\partial(X^\gamma)\in G$. We can view this as a function $\Gamma\to G$. Then $\partial:\Gamma\to G$ is a group homomorphism, since $\mathbf{D}_g\otimes\mathbf{D}_{g'}$ lies in $\mathbf{D}_{gg'}$. Also, $\mathbf{D}_e=\mathbf{Vec}_A^\omega$, so the kernel of $\partial$ equals $A$. Thus $\Gamma$ is an (abelian) extension of $G$ by $A$, as in \eqref{gpext}. Since $G$ acts trivially on $A$, $A$ must lie in the centre of $\Gamma$. 

Because $\mathbf{D}_e=\mathbf{Vec}_A^{(\omega,c)}$, $\left[\widetilde{\omega}|_A\right]=[\omega]$ in $H^3(A,\bbC^\times)$. The rest of the theorem follows from Proposition \ref{Z(CG)}. \end{proof}

 A fusion category $\mathbf{C}$ is called \textit{group-theoretical} if it is Morita equivalent to a pointed fusion category, i.e.\ if its centre $\cZ(\mathbf{C})$ is braided tensor equivalent to the centre of a pointed fusion category.  By Theorem 7.2 of \cite{NNW}, any equivariantization $\mathbf{D}^G$ arising in Theorem \ref{stable} is group-theoretical, and conversely, any group-theoretical MTC arises as $\mathbf{D}^G$ in Theorem \ref{stable}.
 
The braided $G$-crossed category structure on \textbf{Vec}$_\Gamma^{\tilde{\omega}}$  is given explicitly in Section 4 of \cite{Naidu}.
In particular, the $G$-grading on  \textbf{Vec}$^{\tilde{\omega}}_\Gamma$ is given by $\partial(X^\gamma)=\gamma A\in\Gamma/A$. The $G$-action is given by ${}^g(X^\gamma)=X^{\tilde{g}\gamma \tilde{g}^{-1}}$ for any lift $\tilde{g}$ of $g\in \Gamma/A$ to $\Gamma$. Because $A$ is central, this action is well-defined.

\begin{corollary}  Let $\cV$ be a pointed VOA with $\mathbf{Mod}\,\,\cV=\mathbf{Vec}_A^q$. Let $G$ be any finite group of $\cV$-automorphisms with trivial module-map. Then $\mathbf{TwMod}_G\,\cV=\mathbf{Vec}_\Gamma^{\tilde{\omega}}$ for some  $\Gamma$ and $\widetilde{\omega}$ as in Theorem \ref{stable}. Moreover, $\mathbf{Mod}\,\,\cV^G$   is braided tensor equivalent to the equivariantization $\mathbf{D}^G$. \end{corollary}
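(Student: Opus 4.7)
The plan is to assemble the corollary as a direct corollary of Theorem \ref{voathm} and Theorem \ref{stable}, with essentially no new work beyond matching the hypotheses. First I would invoke Theorem \ref{voathm}: since $\cV$ is strongly rational (being pointed) and we are assuming the standing conjecture that $\cV^G$ is strongly rational, $\mathbf{TwMod}_G\,\cV$ is a braided $G$-crossed extension of $\mathbf{Mod}\,\,\cV=\mathbf{Vec}_A^q$, and $\mathbf{Mod}\,\,\cV^G$ is braided tensor equivalent to the equivariantization $(\mathbf{TwMod}_G\,\cV)^G$.

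Next I would verify that the hypothesis of Theorem \ref{stable} is met, namely that the induced $\rho\colon G\to \mathrm{O}(A,q)\le \mathrm{Aut}(A)$ is trivial. This is immediate from the definition of module-map given in Section 2.3: the $G$-action appearing in the braided $G$-crossed structure of $\mathbf{TwMod}_G\,\cV$, restricted to $\mathbf{Mod}\,\,\cV=(\mathbf{TwMod}_G\,\cV)_e$ and then to isomorphism classes of simples, is precisely $\rho$. So the trivial module-map hypothesis of the corollary is exactly the trivial module-map hypothesis of Theorem \ref{stable}.

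With both hypotheses in hand, Theorem \ref{stable} applied to $\mathbf{D}=\mathbf{TwMod}_G\,\cV$ immediately yields a central extension $\Gamma$ of $G$ by $A$ and a cocycle $\widetilde{\omega}\in Z^3(\Gamma,\bbC^\times)$ (with $[\widetilde{\omega}|_A]=[\omega]$) such that $\mathbf{TwMod}_G\,\cV\cong\mathbf{Vec}_\Gamma^{\widetilde{\omega}}$ as braided $G$-crossed extensions of $\mathbf{Vec}_A^q$. Combined with the final clause of Theorem \ref{voathm}, we obtain $\mathbf{Mod}\,\,\cV^G\cong(\mathbf{Vec}_\Gamma^{\widetilde{\omega}})^G=\mathbf{D}^G$, which is the second assertion.

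There is no genuine obstacle here; the only point requiring a sentence of care is confirming that the VOA-theoretic module-map coincides with the module-level action appearing in the categorical hypothesis of Theorem \ref{stable}, since that identification is what allows the two results to be chained. All other content — in particular the identification of the fusion multiplicities, the central extension structure on $\Gamma$, and the compatibility $[\widetilde{\omega}|_A]=[\omega]$ — is already packaged inside Theorem \ref{stable}.
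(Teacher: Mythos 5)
Your proposal is correct and is exactly the argument the paper intends (the corollary is stated without proof precisely because it is the immediate concatenation of Theorem \ref{voathm} with Theorem \ref{stable}, under the standing assumption that $\cV^G$ is strongly rational). Your extra sentence checking that the VOA module-map agrees with the categorical $G$-action on $(\mathbf{TwMod}_G\,\cV)_e$ is the right point of care and matches the paper's definition of the module-map in Section 2.2.
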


The converse of the theorem is also true, thanks to Lemma 5.1 of \cite{MuA}:
\begin{proposition} \label{conv} Suppose $\mathbf{D}$ is a pointed braided $G$-crossed extension of a MTC $\mathbf{Vec}_A^q$. Then the corresponding module-map $\rho$ is trivial.\end{proposition}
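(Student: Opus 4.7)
The plan is to deduce the statement directly from Lemma 5.1 of \cite{MuA}, exactly as the authors' parenthetical comment suggests. That lemma, applied to our situation, says: in any braided $G$-crossed fusion category $\mathbf{D}=\bigoplus_{g\in G}\mathbf{D}_g$, the external $G$-action on the neutral component $\mathbf{D}_e$ preserves up to isomorphism every invertible simple object of $\mathbf{D}_e$. In other words, if $a\in\mathbf{D}_e$ is invertible, then ${}^ga\cong a$ in $\mathbf{D}_e$ for every $g\in G$.

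With that in hand, the argument is a one-liner. Pointedness of $\mathbf{D}$ means \emph{every} simple of $\mathbf{D}$ is invertible, and in particular every simple of $\mathbf{D}_e=\mathbf{Vec}_A^q$ is invertible. The simples of $\mathbf{Vec}_A^q$ are naturally parametrized by $A$, with distinct elements labelling non-isomorphic simples, so the isomorphism relation ${}^ga\cong a$ produced by Müger's lemma forces the honest equality ${}^ga=a$ inside $A$. Unwinding the definition of the module-map $\rho:G\to\mathrm{O}(A,q)$, this says $\rho(g)(a)=a$ for all $g\in G$ and $a\in A$, i.e.\ $\rho$ is trivial.

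There is no real obstacle; the content is entirely packaged inside the cited lemma. If one prefers an in-house derivation avoiding the citation, the natural route is the following: by faithfulness and pointedness, for each $g\in G$ pick an invertible simple $X^g\in\mathbf{D}_g$; then the $G$-crossed braiding $c_{X^g,a}\colon X^g\otimes a\to {}^ga\otimes X^g$ is an isomorphism between two invertible simples of the same $g$-graded component. Comparing this with the ordinary braiding $c_{a,X^g}\colon a\otimes X^g\to X^g\otimes a$ available because $a\in\mathbf{D}_e$ (where the $G$-crossed braiding degenerates to a true braiding), one rewrites the external action ${}^ga$ as the conjugation $X^g\otimes a\otimes (X^g)^{-1}$ inside the ambient pointed category, and then invokes the hexagon/compatibility axioms for a braided $G$-crossed category to conclude that conjugation by $X^g$ acts trivially on the invertibles $a\in A$. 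The expected sticking point is precisely this last step, which is why borrowing Müger's lemma is by far the cleanest option.
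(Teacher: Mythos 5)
Your instinct to lean on Lemma 5.1 of \cite{MuA} matches the paper, which in fact offers no argument beyond that citation. The problem is your paraphrase of the lemma: as you state it (``in \emph{any} braided $G$-crossed fusion category, the external $G$-action preserves every invertible simple of $\mathbf{D}_e$ up to isomorphism''), it is false, and your main argument consequently never uses the actual hypothesis of the proposition, namely that the components $\mathbf{D}_g$ for $g\ne e$ are pointed. A counterexample to your version of the lemma is any Tambara--Yamagami category $\mathbf{TY}_\pm(A,q)$ with $|A|$ odd, viewed as a braided $\Z/2$-crossed extension of $\mathbf{Vec}_A^q$ (Section 5.1): there the nontrivial element of $\Z/2$ acts on the invertibles of $\mathbf{D}_e$ by $a\mapsto -a$. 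If your reading were correct, every module-map would be trivial and all of Section 5 of the paper would collapse.

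The correct statement of Müger's lemma requires an invertible object $X\in\mathbf{D}_g$, and pointedness of $\mathbf{D}$ (together with faithfulness of the grading, so $\mathbf{D}_g\ne 0$) is exactly what supplies one for every $g$. Your ``in-house'' sketch is on the right track but stops at what you call the sticking point, which is in fact immediate and needs no hexagon gymnastics: for invertible $a\in\mathbf{D}_e$ the crossed braiding $c_{a,X}\colon a\otimes X\to{}^eX\otimes a=X\otimes a$ gives $a\otimes X\cong X\otimes a$, while $c_{X,a}\colon X\otimes a\to{}^ga\otimes X$ gives $X\otimes a\cong{}^ga\otimes X$; cancelling the invertible $X$ yields ${}^ga\cong a$, and since distinct elements of $A$ label non-isomorphic simples of $\mathbf{Vec}_A^q$, this forces $\rho(g)a=a$. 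So the proof is salvageable in two lines, but as written your central step rests on a mis-stated lemma that makes the pointedness hypothesis redundant, and your fallback derivation is explicitly left incomplete.
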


Even though $H^3(G,A)$ may be large, the obstruction $o_3(\rho)$ (which equals the Eilenberg-Mac Lane obstruction to \eqref{gpext})  always must vanish for the trivial module-map $\rho$ since $\Gamma=A\times G$ works. The $H^2$ torsor is clear: it corresponds to the different central extensions $\Gamma$ of $G$ by $A$. The obstruction $o_4$ is much more delicate (see Corollary \ref{o4} and Proposition \ref{o4fails} below); when it vanishes, the $H^3$-torsor involves taking any $\alpha\in Z^3 (G,\bbC^\times)$, inflating it to $\alpha'\in Z^3(\Gamma,\bbC^\times)$ in the usual way, and multiplying  $\widetilde{\omega}$ by $\alpha'$. We still have $\widetilde{\omega}\alpha'|_A=\omega$ thanks to inflation.

\subsection{VOA reconstruction}

For this subsection, assume Conjecture \ref{conj} (though this is unnecessary when $G$ is solvable). Use of the conjecture can also be avoided by using conformal nets of factors on $S^1$ rather than VOAs. Our proof of VOA reconstruction here loosely follows the proof of Theorem \ref{reconstruction_theorem}.

\begin{theorem} \label{recon}  Suppose $\mathbf{Vec}_\Gamma^{\tilde{\omega}}$ is a braided $G$-crossed extension of a MTC $\mathbf{Vec}^{(\omega,c)}_A$, for a central extension $\Gamma$ of $G$ by $A$, as in Theorem \ref{stable}. Let $\mathbf{D}$ be the $G$-equivariantization of $\mathbf{Vec}_\Gamma^{\tilde{\omega}}$. 
Then there exists a strongly rational  VOA $\cV$ with $\mathbf{Mod}\,\,\cV\cong\mathbf{Vec}_A^{(\omega,c)}$ which has a group of VOA automorphisms  isomorphic to $G$, and $\mathbf{TwMod}_G\,\cV\cong\mathbf{Vec}_\Gamma^{\tilde{\omega}}$ and $\mathbf{Mod}\,\,\cV^G\cong\mathbf{D}$.
\end{theorem}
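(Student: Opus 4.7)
The plan is to adapt the strategy of Theorem~\ref{reconstruction_theorem}. First I would build a strongly rational VOA $\mathcal{U}$ realizing $\mathbf{Mod}\,\mathcal{U}\cong\mathbf{D}$, then realize $\mathcal{V}$ as a VOA extension of $\mathcal{U}$ along the regular-representation algebra $B_G\in\mathbf{D}$, and finally read off the $G$-action from the algebra structure of $B_G$.

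For the first step, the equivariantization construction places a distinguished copy of $\mathbf{Rep}(G)$ inside $\mathbf{D}$, containing the regular representation $B_G$ as an \'etale algebra with $\dim\mathrm{Hom}_\mathbf{D}(\mathbf{1},B_G)=1$; de-equivariantization (Section~2.1) identifies $\mathbf{Rep}_\mathbf{D}\,B_G\cong\mathbf{Vec}_\Gamma^{\tilde{\omega}}$ and the local subcategory $\mathbf{Rep}_\mathbf{D}^{\loc}\,B_G$ with the trivial graded component $\mathbf{Vec}_A^{(\omega,c)}$. To feed $(\mathbf{D},B_G)$ into Theorem~\ref{reconstruction_theorem} I need two VOA realizations: (i) $\cZ(\mathbf{Vec}_\Gamma^{\tilde{\omega}})\cong\mathbf{Mod}\,\mathcal{W}$ for a strongly rational $\mathcal{W}$, which is Theorem~\ref{doubleVOA} (equivalently Corollary~4 of \cite{EG}) applied to the pair $(\Gamma,\tilde{\omega})$; and (ii) $\mathbf{Vec}_A^{(\omega,c)}\cong\mathbf{Mod}\,\mathcal{V}_L$ for a lattice VOA, granted by Theorem~2 of \cite{EGty}. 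Theorem~\ref{reconstruction_theorem} then produces a strongly rational $\mathcal{U}$ with $\mathbf{Mod}\,\mathcal{U}\cong\mathbf{D}$.

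Next, since $\mathbf{D}$ is integral and pseudo-unitary (Proposition~2.18 of \cite{DGNO}, quoted in Section~2.3), the VOA extension theory of Section~2.2 yields a strongly rational $\mathcal{V}\supseteq\mathcal{U}$ with $\mathrm{Res}_\mathcal{U}\,\mathcal{V}=B_G$ and $\mathbf{Mod}\,\mathcal{V}\cong\mathbf{Rep}_\mathbf{D}^{\loc}\,B_G\cong\mathbf{Vec}_A^{(\omega,c)}$. Proposition~3.2(iv) of \cite{MuA} (cited directly after Theorem~\ref{voathm}) identifies the $\mathcal{V}$-automorphisms fixing $\mathcal{U}$ pointwise with the algebra automorphisms of $B_G$ viewed as the regular representation of $G$, i.e.\ a copy of $G$; hence $G$ acts on $\mathcal{V}$ by VOA automorphisms with $\mathcal{V}^G=\mathcal{U}$. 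Finally, Theorem~\ref{voathm} gives $\mathbf{TwMod}_G\,\mathcal{V}\cong\mathbf{Rep}_{\mathbf{Mod}\,\mathcal{V}^G}\,B_G=\mathbf{Rep}_\mathbf{D}\,B_G\cong\mathbf{Vec}_\Gamma^{\tilde{\omega}}$, and $\mathbf{Mod}\,\mathcal{V}^G\cong\mathbf{D}$ by construction.

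The main obstacle is promoting abstract braided $G$-crossed data to a VOA with an \emph{honest} $G$-action: Theorem~\ref{reconstruction_theorem} produces $\mathcal{U}$ and the further extension $\mathcal{V}$, but by itself says nothing about why $G$ should act on $\mathcal{V}$ with fixed points $\mathcal{U}$. This is precisely the content of Proposition~3.2(iv) of \cite{MuA}: the regular-representation structure on $B_G$ is realized by genuine VOA automorphisms of the extension. All other inputs (the central extension $\Gamma$, the cocycle $\tilde{\omega}$, triviality of the module-map, and the structure of $\mathbf{D}$) have already been packaged into $\mathbf{D}$ and $B_G$ by Theorem~\ref{stable} before the proof begins, so no further cohomological bookkeeping is needed.
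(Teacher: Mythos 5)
Your argument is correct, and it reaches the theorem by a slightly different route than the paper. You build the orbifold first and climb up: feed the pair $(\mathbf{D},B_G)$ into Theorem \ref{reconstruction_theorem} (whose hypotheses you correctly verify via Corollary 4 of \cite{EG} applied to $\cZ(\mathbf{Vec}_\Gamma^{\tilde{\omega}})$ and Theorem 2 of \cite{EGty} applied to $\mathbf{Vec}_A^{(\omega,c)}$) to get $\mathcal{U}$ with $\mathbf{Mod}\,\,\mathcal{U}\cong\mathbf{D}$, then extend along $B_G$ and recover the $G$-action from Proposition 3.2(iv) of \cite{MuA} --- exactly the mechanism the paper uses in Theorem \ref{doubleVOA}. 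The paper instead works from the bottom up: it takes the holomorphic VOA $\cV_{\mathrm{hol}}$ carrying an explicit $\Gamma$-action from Theorem \ref{doubleVOA}, forms $(\cV_L\otimes\cV_{\mathrm{hol}})^{1\times A}$, condenses a Lagrangian algebra $B_A\boxtimes 1$ to produce $\cV$ directly, and lets the $G=\Gamma/A$ action descend from the $\Gamma$-action. The two routes use the same essential inputs (realizability of $\cZ(\mathbf{Vec}_\Gamma^{\tilde{\omega}})$ and of $\mathbf{Vec}_A^{(\omega,c)}$ by VOAs, extension theory, and Conjecture \ref{conj} for nonsolvable $\Gamma$), but yours is cleaner in that the equality $\cV^G=\mathcal{U}$ and the strong rationality of the orbifold are immediate by construction, while the paper's version exhibits the $G$-action more concretely (it is visibly the residual action of $\Gamma/A$ on $\cV_{\mathrm{hol}}^A$), which is useful in the later, more explicit sections. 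One small point you gloss over: after obtaining the order-$|G|$ group of algebra automorphisms of $B_G$ from \cite{MuA}, you should note that the $G$-fixed subobject of the regular representation is exactly the trivial summand, so that $\cV^G=\mathcal{U}$ on the nose and Theorem \ref{voathm} applies with $\mathbf{Mod}\,\,\cV^G\cong\mathbf{D}$; this is one line and matches what the paper does in the proof of Theorem \ref{doubleVOA}.
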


\begin{proof} Follow the notation in the statement of Theorem \ref{stable}. By Theorem \ref{doubleVOA}, there is a holomorphic VOA $\cV_{\mathrm{hol}}$ and a group of $\cV_{\mathrm{hol}}$-automorphisms isomorphic to $\Gamma$ such that $\mathbf{Mod}\,\,\cV_{\mathrm{hol}}^\Gamma\cong\cZ(\mathbf{Vec}_\Gamma^{\tilde{\omega}})$ and $\mathbf{TwMod}_\Gamma\,\cV_{\mathrm{hol}}\cong\mathbf{Vec}_\Gamma^{\tilde{\omega}}$. By Theorem 2 of \cite{EGty}, there is a lattice VOA $\cV_L$  with $\mathbf{Mod}\,\,\cV_L\cong\mathbf{Vec}^\omega_A$. Choose any \'etale algebra $B_A$ in $\cZ(\mathbf{Vec}_A^\omega)\cong\mathbf{Vec}_A^\omega\boxtimes(\mathbf{Vec}_A^\omega)^{\mathrm{rev}}$ such that $(\mathbf{Rep}_{\cZ(\mathbf{Vec}_A^\omega)}\,B_A)^{\mathrm{loc}}\cong \mathbf{Vec}$ -- e.g.\ $B_A=\oplus_{\chi\in\hat{A}}[0,\chi]$ works.

Note that the orbifold $(\cV_L\otimes\cV_{\mathrm{hol}})^{1\times A}$ (regarding $A$ as a normal subgroup of $\Gamma$) is a strongly rational VOA with $\mathbf{Mod}\,\,(\cV_L\otimes\cV_{\mathrm{hol}})^{1\times A}\cong \mathbf{Vec}_A^{(\omega,c)}\boxtimes\cZ(\mathbf{Vec}_A^\omega)\cong \cZ(\mathbf{Vec}_A^\omega)\boxtimes \mathbf{Vec}_A^{(\omega,c)}$. Let $\cV$ denote the extension of $(\cV_L\otimes\cV_{\mathrm{hol}})^{1\times A}$ by  $B_A\boxtimes 1$. Then $\cV$ is strongly rational, with $\mathbf{Mod}\,\,\cV\cong\mathbf{Vec}_A^{(\omega,c)}$, and $G=\Gamma/A$ acts on $\cV_{\mathrm{hol}}^A$ hence $\cV$ as VOA automorphisms.

Perhaps the easiest way to see the latter is to take first the full orbifold $(\cV_L\otimes\cV_{\mathrm{hol}})^{1\times \Gamma}$, which has MTC  $\mathbf{Vec}_A^{(\omega,c)}\boxtimes\cZ(\mathbf{Vec}_\Gamma^{\tilde{\omega}})\cong \cZ(\mathbf{Vec}_A^\omega)\boxtimes \mathbf{D}$, since $\cZ(\mathbf{Vec}_\Gamma^{\tilde{\omega}})\cong(\mathbf{Vec}_A^{(\omega,c)})^{\mathrm{rev}}\boxtimes\mathbf{D}$  by Proposition \ref{Z(CG)}. Since by hypothesis $\mathbf{D}$ is the $G$-equivariantization of the braided $G$-crossed  extension  $\mathbf{Vec}_\Gamma^{\tilde{\omega}}$ of $\mathbf{Vec}_A^{(\omega,c)}$, $\mathbf{D}$ contains an \'etale algebra $B_G$ (a copy of the algebra of functions on $G$) such that the de-equivariantization $\mathbf{D}_G$ is $\mathbf{Vec}_\Gamma^{\tilde{\omega}}$. Note that the two \'etale algebras $B_A\boxtimes 1$ and $1\boxtimes B_G$ commute. Extending  $(\cV_L\otimes\cV_{\mathrm{hol}})^{1\times A}$ by both returns us to $\cV$. The $G$-action on $\cV$ comes from the extension by $1\boxtimes B_G$.

In any case we see that the orbifold $\cV^G$ obtains $(\cV_L\otimes\cV_{\mathrm{hol}})^{1\times \Gamma}$ extended by $B_A\boxtimes 1$, which has $\mathbf{Mod}\,\,\cV^G\cong\mathbf{D}$. Also, $\mathbf{TwMod}_G\,\cV$ recovers $\mathbf{Vec}_\Gamma^{\tilde{\omega}}$, by construction of $\cV$.
\end{proof}




Using  Theorem 7.2 of \cite{NNW} together with Theorem \ref{recon}, the following is immediate:

\begin{corollary}\label{recon_gpth} Let $\mathbf{D}$ be a group-theoretical MTC. Then there exists a pointed VOA $\cV$ and a finite group $G$ of $\cV$-automorphisms such that $\mathbf{Mod}\,\,\cV^G$ is braided tensor equivalent to $\mathbf{D}$.\end{corollary}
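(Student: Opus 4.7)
The plan is to assemble this result by combining the two ingredients the author has flagged: the structural characterization of group-theoretical MTCs due to Nikshych--Naidu--Wang, and the VOA reconstruction Theorem \ref{recon}. So the first step is to unpack what Theorem 7.2 of \cite{NNW} gives us. The paragraph following Theorem \ref{stable} already asserts that any group-theoretical MTC arises as the equivariantization $\mathbf{D}^G$ of a pointed braided $G$-crossed extension $\mathbf{Vec}_\Gamma^{\tilde\omega}$ of a pointed MTC $\mathbf{Vec}_A^{(\omega,c)}$, for some central extension $\Gamma$ of $G$ by $A$ and some cocycle $\tilde\omega\in Z^3(\Gamma,\bbC^\times)$ with $[\tilde\omega|_A]=[\omega]$. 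Applied to our given group-theoretical MTC $\mathbf{D}$, this produces exactly the data $(A,q,\Gamma,\tilde\omega,G)$ required as input to Theorem \ref{recon}.

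Having set that up, the second (and essentially only other) step is to feed this data into Theorem \ref{recon}. That theorem hands us, assuming Conjecture \ref{conj} (or working with conformal nets if one wants to be unconditional), a strongly rational pointed VOA $\cV$ with $\mathbf{Mod}\,\,\cV\cong \mathbf{Vec}_A^{(\omega,c)}$, and a finite group of $\cV$-automorphisms isomorphic to $G$, such that $\mathbf{TwMod}_G\,\cV \cong \mathbf{Vec}_\Gamma^{\tilde\omega}$ and $\mathbf{Mod}\,\,\cV^G\cong \mathbf{D}$. This is exactly what the corollary asks for.

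There is no real obstacle in the proof itself: the work has been done in Theorem \ref{recon} (which handled reconstruction from the braided $G$-crossed extension via an extension of $\cV_L\otimes\cV_{\mathrm{hol}}$) and in \cite{NNW} (which guarantees a group-theoretical MTC has the requisite presentation with trivial module-map). The only thing to verify is that the Nikshych--Naidu--Wang output matches the precise hypotheses of Theorem \ref{recon}: the extension $\Gamma$ must be central (equivalently the module-map $\rho:G\to\mathrm{O}(A,q)$ must be trivial), which is built into the very statement of Theorem \ref{stable} and Proposition \ref{conv}, and the restriction $\tilde\omega|_A$ must represent the same class as $\omega$, which is again part of Theorem \ref{stable}. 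Hence the proof is a direct citation: apply \cite[Thm.~7.2]{NNW}, then apply Theorem \ref{recon}.

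If one wished to make the argument slightly more self-contained, the only subtlety worth flagging is that we implicitly invoke Conjecture \ref{conj}, since Theorem \ref{recon} does, but if $G$ is solvable this is a theorem, and in any case the conformal-net version is unconditional. So in the general formulation the result is conditional on Conjecture \ref{conj}, inherited from Theorem \ref{recon}.
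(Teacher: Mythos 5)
Your proposal is correct and follows exactly the paper's route: the paper states this corollary as an immediate consequence of Theorem 7.2 of \cite{NNW} (which presents any group-theoretical MTC as an equivariantization $(\mathbf{Vec}_\Gamma^{\tilde\omega})^G$ as in Theorem \ref{stable}) combined with Theorem \ref{recon}. Your added remarks about verifying the hypotheses and the implicit reliance on Conjecture \ref{conj} are accurate and consistent with the paper's own caveats at the start of Section 4.2.
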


This result is more general than it may first look. For example, all semisimple quasi-Hopf
algebras of dimension a prime power,  have  categories of representations which are group-theoretical (see Remark 1.6 of \cite{drinfeld}).

\subsection{Quasi-Hopf interpretations}

Since all FPdims in the equivariantization $(\mathbf{Vec}_\Gamma^{\tilde{\omega}})^G$ are integers (being as it is a subcategory of the quantum double $\cZ(\mathbf{Vec}_\Gamma^{\tilde{\omega}})$), that MTC will be the category of representations of a quasitriangular quasi-Hopf algebra. We shall describe and relate two constructions of such quasi-Hopf algebras.

Given a central extension $\Gamma$ of $G$ by $A$ as in \eqref{gpext}, Naidu \cite{Naidu} determines the data needed to define a braided $G$-crossed category structure on some $\mathbf{Vec}^{\tilde{\omega}}_\Gamma$  (when such a structure exists). He calls this data a \textit{quasi-abelian 3-cocycle} (see Definition \ref{qa3c_definition} in our Appendix), so named because it is an extension of Eilenberg-Mac Lane's notion of abelian 3-cocycle.
Given a quasi-abelian 3-cocycle for such a $\Gamma$, Naidu defines a quasi-Hopf algebra structure on $\bbC_{\tilde{\omega}}^\Gamma\otimes\bbC G$ (see Definition \ref{naiqH} in our Appendix) and proves that its category of finite-dimensional representations is  braided tensor equivalent to the $G$-equivariantization of the corresponding  $G$-crossed category  $\mathbf{Vec}^{\tilde{\omega}}_\Gamma$.

Mason--Ng \cite{MN} proposed a similar quasi-Hopf algebra. Let $\mathbf{Vec}_A^{(\omega,c)}$ be a MTC, $\Gamma$ be a central extension of a finite group $G$ by $A$, and $\widetilde{\omega}\in Z^3(\Gamma,\bbC^\times)$. Suppose in addition that 

\smallskip\noindent{\textbf{(MN1)}} $ \theta_a:\Gamma\times \Gamma\to\bbC^\times$ is coboundary on $\Gamma\times\Gamma$ for all $a\in A$, where $\theta_a(g,h)$ is defined in \eqref{thetaform};

\smallskip\noindent{\textbf{(MN2)}} Assuming (MN1), write $\theta_a(g,h)=\frac{t_a(g)\,t_a(h)}{t_a(gh)}$ for some normalized 1-cochains $t_a:\Gamma\to\bbC^\times$ ($t_0=1$). Then $\beta:A\times A\to\widehat{\Gamma}$ is also coboundary, where $$\beta(a,b)(g)=\frac{t_a(g)\,t_b(g)}{t_{a+b}(g)}\theta_g(a,b)\,;$$

\noindent{\textbf{(MN3)}} Assuming (MN2), a normalized 1-cochain $\nu:A\to\widehat{\Gamma}$ can be found so that $\beta(a,b)=\frac{\nu(a)\,\nu(b)}{\nu(a+b)}$ and $c(a,b)=\frac{\nu(a)(b)}{t_b(a)}$, $\forall a,b\in A$.

\smallskip $c$ in (MN3) is the braiding. Choosing choosing different 1-chains $t_a$ satisfying $\delta t_a=\theta_a$  gives an equivalent braiding, but a different 1-cochain $\nu$ with $\delta\nu=\beta$ can affect the braiding $c$ nontrivially.

When those  conditions are both satisfied, we get a quasi-Hopf algebra $D^{\tilde{\omega}}(\Gamma,A)$, namely a \textit{cleft extension} of the group algebra $\bbC \Gamma$ by the twisted dual group algebra $\bbC^\Gamma_{\tilde{\omega}}$. As with Naidu's quasi-Hopf algebras, the underlying vector space is $\bbC_{\tilde{\omega}}^\Gamma\otimes\bbC G$. The twisted quantum double $D^{\tilde{\omega}}(\Gamma)$ is recovered by the choice $A=0$. Moreover, the category  $\mathbf{Rep}\,\,D^{\tilde{\omega}}(\Gamma,A)$  of finite-dimensional representations of $D^{\tilde{\omega}}(\Gamma,A)$ is a MTC.  They conjecture:

\begin{conjecture} \label{MasNg}\cite{MN2} Suppose $\cV$ is a pointed VOA with $\mathbf{Mod}\,\,\cV\cong\mathbf{Vec}_A^{(\omega,c)}$. Let $G$ be a finite group of automorphisms of $\cV$ with trivial module-map. Then  $\mathbf{Mod}\,\,\cV^G\cong \mathbf{Rep}\,\,D^{\tilde{\omega}}(\Gamma,A)$ for some central extension $\Gamma$ of $G$ by $A$, and some $\widetilde{\omega}\in Z^3(\Gamma,\bbC^\times)$ for which properties (MN1)-(MN3) are satisfied. Conversely, if properties (MN1)-(MN3) are satisfied,  then there is a pointed VOA $\cV$ and a group $G\cong\Gamma/A$ of automorphisms of $\cV$ such that  $\mathbf{Mod}\,\,\cV^G\cong \mathbf{Rep}\,\,D^{\tilde{\omega}}(\Gamma,A)$.\end{conjecture}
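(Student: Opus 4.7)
The plan is to combine the categorical classification of pointed braided $G$-crossed extensions in the trivial module-map case (Theorem \ref{stable} and Theorem \ref{recon}) with a direct algebraic comparison between Naidu's quasi-Hopf algebra (Definition \ref{naiqH}) and Mason--Ng's cleft extension $D^{\tilde{\omega}}(\Gamma,A)$.

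For the forward direction, I would first apply Theorem \ref{stable}: since $G$ acts with trivial module-map on $\cV$ with $\mathbf{Mod}\,\cV \cong \mathbf{Vec}_A^{(\omega,c)}$, the category $\mathbf{TwMod}_G\,\cV$ is pointed of the form $\mathbf{Vec}_\Gamma^{\tilde{\omega}}$ for some central extension $\Gamma$ of $G$ by $A$ with $[\tilde{\omega}|_A]=[\omega]$, equipped with an explicit braided $G$-crossed structure extending $\mathbf{Vec}_A^{(\omega,c)}$. Equivariantizing and invoking Theorem \ref{voathm} gives $\mathbf{Mod}\,\cV^G \cong (\mathbf{Vec}_\Gamma^{\tilde{\omega}})^G$. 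The remaining task is to match this equivariantization with $\mathbf{Rep}\,D^{\tilde{\omega}}(\Gamma,A)$, via Naidu's realization of $(\mathbf{Vec}_\Gamma^{\tilde{\omega}})^G$ as representations of his quasi-Hopf algebra on $\bbC_{\tilde{\omega}}^\Gamma \otimes \bbC G$.

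For the converse, given $(\Gamma,\tilde{\omega})$ satisfying (MN1)--(MN3) together with cochains $t_a,\nu$, I would extract from this data a quasi-abelian 3-cocycle in the sense of Definition \ref{qa3c_definition}. By Naidu's theorem, this equips $\mathbf{Vec}_\Gamma^{\tilde{\omega}}$ with a braided $G$-crossed structure whose trivial component is $\mathbf{Vec}_A^{(\omega,c)}$. Theorem \ref{recon} then produces a pointed VOA $\cV$, a group of automorphisms isomorphic to $G \cong \Gamma/A$ acting with trivial module-map, and identifications $\mathbf{TwMod}_G\,\cV \cong \mathbf{Vec}_\Gamma^{\tilde{\omega}}$ and $\mathbf{Mod}\,\cV^G \cong (\mathbf{Vec}_\Gamma^{\tilde{\omega}})^G$. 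Composed with Naidu's equivalence $(\mathbf{Vec}_\Gamma^{\tilde{\omega}})^G \cong \mathbf{Rep}\,D^{\tilde{\omega}}(\Gamma,A)$, this delivers the desired realization.

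The hard part in both directions, and the real content of the statement beyond the categorical input already supplied by Sections 4.1--4.2, is the dictionary between Naidu's quasi-abelian 3-cocycle data and the Mason--Ng conditions (MN1)--(MN3). Concretely, I would show that (MN1), expressing $\theta_a$ as a coboundary, is exactly the solvability condition ensuring a well-defined $G$-action on the $A$-graded pieces of $\mathbf{Vec}_\Gamma^{\tilde{\omega}}$; that the cochain $\beta$ of (MN2) measures the failure of the chosen trivializations $t_a$ to be $A$-multiplicative, and its coboundary condition is exactly Naidu's compatibility axiom for extending to a $G$-crossed braiding; and that (MN3) encodes the normalization linking the ambient braiding $c$ on $A$ to the lift $\nu$, matching Naidu's final normalization datum. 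Once this dictionary is in place, both Naidu's algebra and $D^{\tilde{\omega}}(\Gamma,A)$ have the same underlying vector space $\bbC^\Gamma_{\tilde{\omega}} \otimes \bbC G$, and a direct comparison of the explicit formulas for product, coproduct, associator and $R$-matrix--up to a natural gauge twist if necessary--yields an equivalence of quasi-bialgebras, hence of their categories of finite-dimensional representations. This last step is bookkeeping, but reconciling the different normalizations used in \cite{Naidu} and \cite{MN} is where the most care is needed.
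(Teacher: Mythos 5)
Your proposal is correct and follows essentially the same route as the paper: Theorem \ref{stable} plus Naidu's equivariantization result for the structural part, Theorem \ref{recon} for the converse VOA realization, and an explicit algebraic comparison of Naidu's quasi-Hopf algebra with the Mason--Ng cleft extension on the common underlying space $\bbC^\Gamma_{\tilde{\omega}}\otimes\bbC G$ (the paper carries this out in the Appendix, Lemmas \ref{Cleft_Object}--\ref{qhequiv} and Theorem \ref{main_cherry}, where the "gauge twist" you anticipate materializes as passing to $K^{\op}$, the reversed conjugate cocycle $\overline{\alpha}_{\rev}$, and a twist of the antipode by $\beta$). The dictionary between (MN1)--(MN3) and Naidu's quasi-abelian 3-cocycle axioms that you flag as the crux is exactly where the paper's work lies, and your description of what each condition encodes is consistent with the paper's treatment.
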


The relation between the Naidu and Mason--Ng quasi-Hopf algebras is established in the Appendix (see Lemma \ref{qhequiv}). This then proves their Conjecture  in the affirmative. It also gives us: 

\begin{corollary} \label{o4} Suppose $\mathbf{Vec}_A^{(\omega,c)}$ is a MTC, $\Gamma$ is a central extension of a finite group $G$ by $A$, and $\widetilde{\omega}\in Z^3(\Gamma,\bbC^\times)$ has $\widetilde{\omega}|_A=\omega$.  Then the ENO obstruction $o_4$ vanishes and $\mathbf{Vec}_\Gamma^{\tilde{\omega}}$ has braided $G$-crossed structure iff (MN1)-(MN3) are satisfied.\end{corollary}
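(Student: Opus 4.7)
The plan is to funnel both sides of the biconditional through a single intermediate statement: that a braided $G$-crossed structure on $\mathbf{Vec}_\Gamma^{\tilde\omega}$ extending $\mathbf{Vec}_A^{(\omega,c)}$ exists. Once both directions are reduced to this common middleman, the corollary becomes formal, with the real work having been done in the Appendix.

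First I would handle the ENO side. Since the module-map $\rho$ is trivial, Proposition \ref{conv} and Theorem \ref{stable} together say that every braided $G$-crossed extension of $\mathbf{Vec}_A^{(\omega,c)}$ has the form $\mathbf{Vec}_{\Gamma'}^{\tilde\omega'}$ for some central extension $\Gamma'$ of $G$ by $A$ and some $\tilde\omega'$ restricting to $\omega$ on $A$. The choice of central extension $\Gamma$ selects an element of the $H^2(G,A)$-torsor of categorical lifts $\underline{\rho}$ of $\rho$, and by the ENO framework $o_4(\underline{\rho})\in H^4(G,\bbC^\times)$ vanishes precisely when \emph{some} extension compatible with that $\underline\rho$ (hence with that $\Gamma$) exists. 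When $o_4$ vanishes, the residual $H^3(G,\bbC^\times)$-torsor of extensions is realized by twisting $\tilde\omega$ by inflated 3-cocycles from $G$. Consequently, the left-hand conjunction ``$o_4$ vanishes \emph{and} $\mathbf{Vec}_\Gamma^{\tilde\omega}$ carries a braided $G$-crossed structure extending $\mathbf{Vec}_A^{(\omega,c)}$'' is equivalent to the bare existence assertion on $\mathbf{Vec}_\Gamma^{\tilde\omega}$.

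Second I would translate the Mason--Ng side through the Appendix. Naidu's theory \cite{Naidu} encodes braided $G$-crossed structures on $\mathbf{Vec}_\Gamma^{\tilde\omega}$ extending $\mathbf{Vec}_A^{(\omega,c)}$ as quasi-abelian 3-cocycles with underlying ordinary 3-cocycle $\tilde\omega$ and restriction $(\omega,c)$ on $A$. Appendix Lemma \ref{qhequiv} identifies Naidu's quasi-Hopf algebra with Mason--Ng's $D^{\tilde\omega}(\Gamma,A)$; unpacking this identification yields a dictionary between quasi-abelian 3-cocycle data and the trivialization tower (MN1)--(MN3). Concretely, (MN1) provides 1-cochains $t_a$ trivializing the 2-cocycles $\theta_a$, (MN2) trivializes the residual 2-cocycle $\beta$ assembled from the $t_a$, and (MN3) demands the 1-cochain $\nu$ with $\delta\nu=\beta$ recover the prescribed braiding $c$ via $c(a,b)=\nu(a)(b)/t_b(a)$. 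Thus (MN1)--(MN3) are exactly the conditions for a quasi-abelian 3-cocycle with the prescribed boundary data to exist.

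Combining these two reductions, both sides of the biconditional collapse to the existence of a braided $G$-crossed structure on $\mathbf{Vec}_\Gamma^{\tilde\omega}$ extending $\mathbf{Vec}_A^{(\omega,c)}$, and the corollary follows. The main obstacle is not here but in the Appendix: one must construct an explicit gauge equivalence of quasi-Hopf algebras intertwining Naidu's and Mason--Ng's presentations, and verify that the existence of Naidu's quasi-abelian 3-cocycle translates termwise into (MN1)--(MN3) under that equivalence. Once Lemma \ref{qhequiv} is pinned down with its dictionary of cochains, the present corollary is essentially a restatement.
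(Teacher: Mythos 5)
Your proposal is correct and follows essentially the same route as the paper: the conjunction on the left collapses (via ENO gauging theory, Proposition \ref{conv} and Theorem \ref{stable}) to the bare existence of a braided $G$-crossed structure on $\mathbf{Vec}_\Gamma^{\tilde{\omega}}$ extending $\mathbf{Vec}_A^{(\omega,c)}$, which Naidu parametrizes by quasi-abelian $3$-cocycles, and the Appendix equivalence with the Mason--Ng cleft extension (Lemma \ref{qhequiv}) translates that existence into (MN1)--(MN3). The paper's own justification is exactly this deferral to the Appendix, so your elaboration of the cochain dictionary $(t_a,\beta,\nu)\leftrightarrow(\gamma,\mu,c)$ matches the intended argument.
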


Recall that the other obstruction, $o_3$, automatically vanishes in this trivial module-map setting. We will give applications of this corollary in Section 4.5. Certainly there are many examples where the first sentence of Corollary \ref{o4} is satisfied, yet $\mathbf{Vec}_\Gamma^{\tilde{\omega}}$ is not a braided $G$-crossed extension of $\mathbf{Vec}_A^{(\omega,c)}$. For example take $A$ to be odd order (so $[\omega]=[1]$) and $\tilde{\omega}=1$. Then $t_a=1=\nu$ satisfies $\delta t_a=\theta_a$ and $\delta\nu=\beta$, in which case $(\mathbf{Vec}_\Gamma^{\widetilde{\omega}})_e=\mathbf{Vec}_A\not\cong\mathbf{Vec}_A^{(\omega,c)}$  (the condition on $c$ in (MN3) is missing).  If in addition $\Gamma$ is perfect (i.e.\ $\Gamma=[\Gamma,\Gamma]$), no other choice of $t_a,\nu$ will work either. 

See also the discussion around Proposition \ref{o4fails}.

\subsection{The simple objects and modular data of $D^{\omega}(K,A)$}
\label{simple_obj_cleft_section}

Let $\mathbf{D}=\mathbf{Mod}\,\,\cV^G$ where $\mathbf{Mod}\,\,\cV\cong\mathbf{Vec}^{(\omega,c)}_A$ when the module-map is trivial, or equivalently $\mathbf{D}$ is the equivariantization $(\mathbf{Vec}_\Gamma^{\tilde{\omega}})^G$, or equivalently $\mathbf{D}=\mathbf{Rep}\,\, D^{\tilde{\omega}}(\Gamma,A)$. Since $\mathbf{D}$ is a braided fusion subcategory of $\cZ(\mathbf{Vec}_\Gamma^{\tilde{\omega}})$, it suffices to identify which simples of $\cZ(\mathbf{Vec}_\Gamma^{\tilde{\omega}})$ lie in $\mathbf{D}$. Since $\cZ(\mathbf{Vec}_\Gamma^{\tilde{\omega}})\cong\mathbf{D}\boxtimes (\mathbf{Vec}_A^{(\omega,c)})^{\mathrm{rev}}$, the $S$ matrix entries for $\mathbf{D}$ equal the corresponding $S$ matrix entries for $\cZ(\mathbf{Vec}_\Gamma^{\tilde{\omega}})$, rescaled by $\sqrt{|A|}$ (the value of $1/S_{0,0}$ for $(\mathbf{Vec}_A^{(\omega,c)})^{\mathrm{rev}}$). The  $S$ matrix for the centre $\cZ(\mathbf{Vec}_\Gamma^{\tilde{\omega}})$ is given in \cite{CGR} for any twist and group.

Recall from Section 3.1 that the (equivalence classes of)  simple objects of $\mathcal{Z}(\mathbf{Vec}_\Gamma^{\tilde{\omega}})$ are parametrized by $[\gamma,\chi]$ where $\gamma$ is a representative of a conjugacy class in $\Gamma$ and $\chi$ is the character of an irreducible projective representation of the centralizer $C_\Gamma(\gamma)$ with 2-cocycle $\theta_\gamma$ determined by $\widetilde{\omega}$ as in \eqref{thetaform}. 

Then, for any $[\gamma,\chi]\in\mathbf{D}\subset \mathcal{Z}(\mathbf{Vec}_\Gamma^{\tilde{\omega}})$, the ribbon twist is $\chi(\gamma)/\chi(e)$ and  FPdim($[\gamma,\chi])=\|K_\gamma\|\,\chi(e)$, both inherited from $\mathcal{Z}(\mathbf{Vec}_\Gamma^{\tilde{\omega}})$, where $K_\gamma$ is the conjugacy class of $\gamma$ in $\Gamma$.

There are different ways to identify the simple objects of $\mathbf{D}$. For example, one could use  the description of $\mathrm{\textbf{Rep}}\,\, H(\omega,\gamma,\mu)$ given in \cite[Section 5]{Naidu}. Alternatively one may use \cite[Corollary 2.13, Theorem 3.9]{BN} to obtain the simple objects and fusion rules. The approach we take in this subsection is to first identify the subcategory $(\mathbf{Vec}_A^{(\omega,c)})^{\mathrm{rev}}$ and then take its M\"uger centralizer. In the last subsection of the appendix,  we take a more quasi-Hopf point of view and construct the representations of $D^{\tilde{\omega}}(\Gamma,A)$ directly, following \cite{DPR}.

 
 Recall the 1-cochains $t_a$ and $\nu$ from (MN2),(MN3).
 
\begin{proposition} For each $a\in A$ define  $x_a:=t_a/\nu(a)$. Then $[a,x_a]$ is a simple object in $\cZ(\mathbf{Vec}_\Gamma^{\tilde{\omega}})$. The subcategory they generate is equivalent as a MTC to $(\mathbf{Vec}_A^{(\omega,c)})^{\mathrm{rev}}$. The equivariantization $\mathbf{D}=(\mathbf{Vec}_\Gamma^{\tilde{\omega}})^G$ consists of the simple objects $[\gamma,\chi]\in\mathbf{Vec}_\Gamma^{\tilde{\omega}}$ satisfying $\chi(a)=\overline{x_a(\gamma)}\,\mathrm{dim}\,\chi$ for all $a\in A$.\end{proposition}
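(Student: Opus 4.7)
The plan is to work entirely inside the centre $\cZ(\mathbf{Vec}_\Gamma^{\tilde\omega})$ and to invoke Theorem \ref{stable}, which furnishes an embedding $(\mathbf{Vec}_A^{(\omega,c)})^{\mathrm{rev}}\hookrightarrow\cZ(\mathbf{Vec}_\Gamma^{\tilde\omega})$ whose M\"uger centraliser is $\mathbf{D}=(\mathbf{Vec}_\Gamma^{\tilde\omega})^G$. Accordingly the strategy splits into three steps: verify that each $[a,x_a]$ is a well-defined simple of $\cZ(\mathbf{Vec}_\Gamma^{\tilde\omega})$; identify the subcategory they generate with the image of this embedded copy of $(\mathbf{Vec}_A^{(\omega,c)})^{\mathrm{rev}}$; and then compute the monodromy to read off the centraliser condition.

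For the first two steps, note that $A\subseteq Z(\Gamma)$ forces the conjugacy class of $a\in A$ to be $\{a\}$ and $C_\Gamma(a)=\Gamma$, so the simples of $\cZ(\mathbf{Vec}_\Gamma^{\tilde\omega})$ supported on $\{a\}$ are exactly the irreducible $\theta_a$-projective characters of $\Gamma$. Condition (MN1) says $\delta t_a=\theta_a$, so $t_a$ is itself a 1-dimensional such character, and twisting by the ordinary character $\nu(a)^{-1}\in\widehat\Gamma$ (available by (MN3)) preserves the $\theta_a$-projective property; hence $x_a=t_a/\nu(a)$ yields a simple $[a,x_a]$ of FPdim $1$, and the family $\{[a,x_a]\}_{a\in A}$ spans a pointed fusion subcategory of rank $|A|$. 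To identify this subcategory with $(\mathbf{Vec}_A^{(\omega,c)})^{\mathrm{rev}}$ as a MTC, I would recall that pointed MTCs are classified by their metric group, so it suffices to match the ribbon twist. Using $\theta([\gamma,\chi])=\chi(\gamma)/\chi(e)$ gives $\theta([a,x_a])=x_a(a)=t_a(a)/\nu(a)(a)$, and specialising (MN3)'s identity $c(a,b)=\nu(a)(b)/t_b(a)$ to $a=b$ yields $x_a(a)=c(a,a)^{-1}=\overline{q(a)}$, exactly the twist on $(\mathbf{Vec}_A^{(\omega,c)})^{\mathrm{rev}}$. The explicit shape of the half-braiding moreover shows this is the canonical copy produced by Theorem \ref{stable}, not merely an abstractly equivalent one; this is precisely what makes (MN1)--(MN3) the right hypotheses.

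The main obstacle is the third step: computing the double braiding of a general $[\gamma,\chi]\in\cZ(\mathbf{Vec}_\Gamma^{\tilde\omega})$ with each invertible $[a,x_a]$. Because $a$ is central and $[a,x_a]$ is invertible, $c_{[\gamma,\chi],[a,x_a]}\circ c_{[a,x_a],[\gamma,\chi]}$ acts as a scalar on $[a,x_a]\otimes[\gamma,\chi]$. The half-braiding of $[a,x_a]$ at the underlying object $X^{\gamma}$ of $[\gamma,\chi]$ is by construction multiplication by $x_a(\gamma)$; going back, the half-braiding of $[\gamma,\chi]$ at $X^a$ is governed by the action of the central element $a\in C_\Gamma(\gamma)$ on the $\theta_\gamma$-projective representation $\chi$, which by a projective version of Schur's lemma is multiplication by $\chi(a)/\dim\chi$. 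The resulting monodromy scalar $x_a(\gamma)\chi(a)/\dim\chi$ can be cross-checked against the centre $S$-matrix in \cite{CGR}. Setting it to $1$ for every $a\in A$, and using $|x_a(\gamma)|=1$, gives exactly $\chi(a)=\overline{x_a(\gamma)}\dim\chi$, which by Theorem \ref{stable} characterises the simples of $\mathbf{D}$. The delicate part is tracking the $\tilde\omega$-cocycle corrections in both half-braidings and verifying they cancel in the final scalar; this is systematic bookkeeping in the centre of a twisted group algebra, but must be done carefully.
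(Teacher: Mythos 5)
There is a genuine gap in your second step. You claim that since pointed MTCs are classified by their metric groups, "it suffices to match the ribbon twist." But a metric group is the underlying group \emph{together with} the quadratic form, and you never establish that the underlying group of the subcategory generated by the $[a,x_a]$ is $A$ with $a\mapsto[a,x_a]$ a group isomorphism. Concretely, you assert that $\{[a,x_a]\}_{a\in A}$ "spans a pointed fusion subcategory of rank $|A|$," but this requires closure under fusion: $[a,x_a]\otimes[b,x_b]$ is certainly an invertible object supported on the central element $a+b$, hence of the form $[a+b,\zeta]$ for some $\theta_{a+b}$-projective character $\zeta$, but if $\zeta\ne x_{a+b}$ the generated subcategory has rank strictly larger than $|A|$ and your classification argument does not apply. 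Verifying $\zeta= x_{a+b}$ is exactly where (MN2)--(MN3) are used beyond the diagonal: one computes $x_a(g)\,x_b(g)/x_{a+b}(g)=\theta_g(a,b)^{-1}$ from $\beta(a,b)=\delta\nu(a,b)$ and must match this against the cocycle correction in the tensor product of centrally supported simples of $\cZ(\mathbf{Vec}_\Gamma^{\tilde{\omega}})$. The paper sidesteps this bookkeeping entirely: it computes $S^\Gamma_{[a,x_a],[b,x_b]}$ from the known $S$-matrix of the twisted double, observes it equals $\overline{S^A_{a,b}}$ up to normalization, and deduces the fusion rules from Verlinde's formula together with nondegeneracy of $S^A$. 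Only after that does it compare twists, as you do.

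Your third step also takes a different route from the paper's: you propose computing the monodromy scalar $x_a(\gamma)\chi(a)/\dim\chi$ directly from the half-braidings, whereas the paper invokes the Naidu--Nikshych--Witherspoon classification of fusion subcategories of $\cZ(\mathbf{Vec}_\Gamma^{\tilde{\omega}})$ by triples $(K,H,B)$ and their formula for the M\"uger centralizer, identifying the subcategory generated by the $[a,x_a]$ with the triple $(A,\Gamma,B)$, $B(a,\gamma)=x_a(\gamma)$. Your scalar is correct and consistent with the $S$-matrix of the twisted double, so this route is viable in principle; but you explicitly defer the "$\tilde{\omega}$-cocycle corrections," which is the only nontrivial content of that computation, so as written both the fusion-closure step and the centralizer computation are asserted rather than proved.
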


\begin{proof}  First note that for any $a\in A$, $x_a:=t_a/\nu(a)$ is a 1-dimensional projective representation of $\Gamma$ with multiplier $$\frac{t_a(b)}{\nu(a)(b)}\frac{t_a(b')}{\nu(a)(b')}\frac{\nu(a)(b+b')}{t_a(b+b')}=\theta_a(b,b')$$
so $[a,x_a]\in\cZ(\mathbf{Vec}_\Gamma^{\tilde{\omega}})$ -- in fact it's a simple current. 

Recall that a pointed MTC is uniquely determined by its fusions (defining an abelian group) and the ribbon twists $\theta(x)$, determining the quadratic form $q(x)=c(x,x)$. We can access the fusions through the $S$ matrices.
By e.g.\ (5.30) of \cite{MN}, we compute $$S^\Gamma_{[a,x_a],[b,x_b]}=\frac{1}{|\Gamma|}\frac{t_a(b)}{\nu(a)(b)}\frac{t_b(a)}{\nu(b)(a)}=\frac{1}{|\Gamma|}\overline{c(a,b)\,c(b,a)}=\frac{\sqrt{|A|}}{|\Gamma|}\overline{S^A_{a,b}}$$
where $S^\Gamma$ is the (normalized) $S$ matrix of $\cZ(\mathbf{Vec}_\Gamma^{\tilde{\omega}})$ and  $S^A$ is that of $\mathbf{Vec}_A^{({\omega},c)}$. By Verlinde \eqref{verl}, for each $b\in A$ the maps $a\mapsto S^A_{a,b}/S^A_{0,b}$ resp.\ $[\gamma,\chi]\mapsto S^\Gamma_{[\gamma,\chi],[b,x_b]}/S^\Gamma_{[e,1],[b,x_b]}$  define  1-dimensional representations of the fusion rings of $\mathbf{Vec}_A^{(\omega,c)}$ resp.\ $\cZ(\mathbf{Vec}_\Gamma^{\tilde{\omega}})$. Hence $$\frac{S^\Gamma_{[a,x_a],[b,x_b]}}{S^\Gamma_{[0,1],[b,x_b]}}\,\frac{S^\Gamma_{[a',x_{a'}],[b,x_b]}}{S^\Gamma_{[0,1],[b,x_b]}}=\overline{\frac{S^A_{a,b}}{S^A_{0,b}}}\,\overline{\frac{S^A_{a',b}}{S^A_{0,b}}}
=\overline{\frac{S^A_{a+a',b}}{S^A_{0,b}}}=\frac{S^\Gamma_{[a+a',x_{a+a'}],[b,x_b]}}{S^\Gamma_{[0,1],[b,x_b]}}$$
Nondegeneracy of $\mathbf{Vec}_A^{(\omega,c)}$ (=invertibility of $S^A$) implies the matrix $ {\frac{S^\Gamma_{[a,x_a],[b,x_b]}}{S^\Gamma_{[0,1],[b,x_b]}}} $ is also invertible, so the fusions of the simple currents $[a,x_a]$ match those of $(\mathbf{Vec}_A^{(\omega,c)})^{\mathrm{rev}}$. 

Finally, using (MN3) we find that the ribbon twists $\theta([a,x_a])=t_a(a)/\nu(a)(a)$ and $\theta(a)=c_{a,a}$ are complex conjugates. Therefore the $[a,x_a]$ generate a fusion subcategory in $\cZ(\mathbf{Vec}_\Gamma^{\tilde{\omega}})$ equivalent as a MTC to $(\mathbf{Vec}_A^{(\omega,c)})^{\mathrm{rev}}$.

Theorem 1.2 of  \cite{NNW} classifies all fusion subcategories of $\cZ(\mathbf{Vec}_\Gamma^{\tilde{\omega}})$ in terms of a triple $(K,H,B)$ where $K,H$ are normal subgroups of $\Gamma$ and $B:K\times H\to\bbC^\times$ is a $\Gamma$-invariant $\widetilde{\omega}$-bicharacter. For the subcategory generated by the $[a,x_a]$, the triple is $(A,\Gamma,B)$ where $B(a,\gamma)=x_a(\gamma)$ $\forall a\in A,\gamma\in\Gamma$. Lemma 5.10 of \cite{NNW} tells us that the M\"uger centralizer of $(\mathbf{Vec}_A^{(\omega,c)})^{\mathrm{rev}}$ has triple $(\Gamma,A,\overline{B^{\mathrm{op}}})$ where $\overline{B^{\mathrm{op}}}(\gamma,a)=\overline{x_a(\gamma)}$. Since the M\"uger centralizer will be $\mathbf{D}$, this translates to the condition on $[\gamma,\chi]$ given in the Proposition.
\end{proof}

Being an equivariantization, $\mathbf{D}$ (which once again is \textbf{Mod}$\,\,\cV^G$ in the VOA setting) must contain an \'etale algebra $B_G$ which is a copy of the regular representation of $G$. Indeed, $B_G=\oplus_{\chi}\mathrm{dim}\,\chi\,[e,\chi]\in\mathbf{D}$ where $\chi$ runs over all ordinary irreducible characters of $\Gamma $ satisfying $\chi(a)=1$ $\forall a\in A$, or equivalently $\chi$ defines an ordinary irreducible character of $G=\Gamma/A$.

Let $M^\gamma$ be the unique simple $\gamma$-twisted $\cV$-module, or equivalently the simple $v_\gamma\in\mathbf{Vec}_\Gamma^{\tilde{\omega}}$. Restriction of $M^\gamma$ to $\cV^G$ is $\oplus_{\chi} \mathrm{dim}\,\chi\, [\gamma,\chi]$ where the sum is over all $\chi$ such that $[\gamma,\chi]\in\mathbf{D}$, i.e.\ all irreducible projective characters $\chi$ of the centralizer $C_\Gamma(\gamma)$ with 2-cocycle $\theta_\gamma$ such that $\chi(a)=\overline{x_a(\gamma)}\,\mathrm{dim}\,\chi$ $\forall a\in A$. Induction sends $[\gamma,\chi]\in\mathbf{D}$ to $\oplus_{h}\mathrm{dim}\,\chi\,M^h$ where the sum is over the conjugacy class of $\gamma$.

\subsection{Examples}

As mentioned earlier, one situation where the module-map is guaranteed to be trivial, is when $A=\Z/2$. In this case, recall that $\mathbf{Vec}_{\Z/2}^{(\omega,c)}$ can be a MTC only when $[\omega]$ is nontrivial in $H^3(\Z/2,\bbC^\times)$. Up to equivalence, there are precisely two MTC for $A=\Z/2$, corresponding to either sign in the braiding $c(1,1)=\pm\mathrm{i}$. The lattices VOAs $\cV_{A_1}$ and $\cV_{E_7}$ realize these MTC for $c(1,1)=\pm\mathrm{i}$ respectively.

Consider $\cV_{A_1}$, also known as the level 1 affine $\mathfrak{sl}_2$ VOA $\cV(\mathfrak{sl}_2,1)$. Its automorphism group  is SO$_3(\mathbb{R})$, which acts by the adjoint representation on the homogeneous subspace $ (\cV_{A_1})_1\cong\bbC^3$, which generates  $\cV_{A_1}$. Its $\Z/2$-central extension SU(2) acts as automorphisms on the nontrivial $\cV_{A_1}$-module, which is generated by its lowest $L_0$-eigenspace, a copy of the irreducible 2-dimensional SU(2)-module.  As is well-known, the finite subgroups $G$ of automorphisms of $\cV_{A_1}$ therefore fall into the familiar ADE pattern: cyclic, dihedral, and Alt$_4,\mathrm{Sym}_5,\mathrm{Alt}_5$, and the relevant central extensions $\Gamma$ are their lifts into SU(2).  Then this falls into the framework of this section. (All such $G$ are solvable, except Alt$_5$, so for the latter we require Conjecture 1 to guarantee strong rationality of the VOA orbifold.)

More generally, the Main Theorem of  \cite{MN3} concerns groups $\Gamma$ which contain exactly one subgroup $A$ of order 2 (which therefore lies in the centre). Examples of such groups are  SL$_2(\mathbb{F})$ for $\mathbb{F}$ a finite field of odd order,  the generalized quaternion groups, and the finite subgroups of SU(2). The full classification of such $\Gamma$ is given in Section 3.4 of \cite{MN3}.

Using our Corollary \ref{o4}, their Main Theorem says in our language:

\begin{corollary} \label{mn3} Let $\Gamma$ be a central extension of a finite group $G$ by $A=\Z/2$, and assume $A$ is the only order-2 subgroup of  $\Gamma$. Let  $\widetilde{\omega}\in Z^3(\Gamma,\bbC^\times)$ be such that $[\widetilde{\omega}|_{A}]$ is nontrivial. Then (MN1)-(MN2) are satisfied, and (MN3) defines the sign on a nondegenerate braiding $c$, such that $\mathbf{Vec}_\Gamma^{\tilde{\omega}}$ will be a braided $G$-crossed extension of the MTC $\mathbf{Vec}_A^{(\tilde{\omega}|_A,\pm\mathrm{i})}$ for that  sign.\end{corollary}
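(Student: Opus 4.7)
The plan is to use Corollary~\ref{o4}, which equates the existence of a braided $G$-crossed structure on $\mathbf{Vec}_\Gamma^{\tilde\omega}$ extending $\mathbf{Vec}_A^{(\tilde\omega|_A,c)}$ with the verification of the Mason--Ng conditions (MN1)--(MN3). The task then reduces to showing that (MN1) and (MN2) hold automatically under our hypothesis on $\Gamma$, and that the sign ambiguity in (MN3) corresponds to the two possible signs of $c$. This is essentially the Main Theorem of \cite{MN3} repackaged in categorical language.

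First I would verify (MN1). Because the module-map is trivial, $A$ is central in $\Gamma$, so for the nontrivial $a \in A$ the cocycle $\theta_a$ is defined on all of $\Gamma \times \Gamma$ (and $\theta_0 = 1$ is trivially coboundary). The class $[\theta_a] \in H^2(\Gamma, \bbC^\times)$ is $2$-torsion, since the slant-product assignment $a \mapsto [\theta_a]$ is additive in $a$ and $2a = 0$. The hypothesis that $A$ is the unique order-$2$ subgroup of $\Gamma$ forces every Sylow $2$-subgroup of $\Gamma$ to be cyclic or generalized quaternion (a classical theorem on finite $2$-groups with a unique involution). Since both families have trivial Schur multiplier, the standard restriction-to-Sylow argument shows that the $2$-primary part of $H^2(\Gamma, \bbC^\times)$ vanishes, and hence $[\theta_a] = 0$, yielding $1$-cochains $t_a$ with $\delta t_a = \theta_a$. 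A parallel Sylow-$2$ argument applied to $\Gamma^{ab}$ shows that $\beta(a,a) \in \hat\Gamma$ is a square, giving (MN2).

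For (MN3), once $t_a$ and a solution $\nu$ to $\delta\nu = \beta$ are chosen, the braiding is forced by $c(a,b) = \nu(a)(b)/t_b(a)$. A direct computation using $\nu(a)^2 = \beta(a,a)$, the formula $\beta(a,a)(a) = t_a(a)^2\,\theta_a(a,a)$, and $\theta_a(a,a) = \tilde\omega(a,a,a) = -1$ (the last equality holding after a normalization, since $[\tilde\omega|_A]$ is the nontrivial class in $H^3(\bbZ/2,\bbC^\times)$) shows that $c(a,a)^2 = -1$, so $c(a,a) \in \{\pm i\}$. The actual sign is pinned down by the choice of square root $\nu(a)$, and (MN3) thereby selects which of the two nondegenerate MTCs $\mathbf{Vec}_A^{(\tilde\omega|_A,\pm i)}$ appears as the degree-$e$ component of the braided $G$-crossed extension $\mathbf{Vec}_\Gamma^{\tilde\omega}$.

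The main obstacle is the cohomological vanishing in (MN1), which is the technical heart of \cite{MN3}: it relies on the classification of finite $2$-groups with a unique involution combined with the triviality of the Schur multiplier for cyclic and generalized quaternion $2$-groups, all transferred to $\Gamma$ via the Sylow restriction argument. Once this vanishing is in hand, (MN2), (MN3), and the identification of the sign are essentially bookkeeping, and the braided $G$-crossed structure follows from Corollary~\ref{o4}.
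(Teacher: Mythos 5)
Your overall route is the paper's route: Corollary \ref{o4} reduces the statement to verifying (MN1)--(MN3), and the corollary is then the Main Theorem of \cite{MN3} restated. The paper does exactly this and stops there --- it \emph{cites} the Main Theorem of \cite{MN3} for (MN1)--(MN3) rather than proving it. You instead attempt to re-derive that input, and this is where the proposal runs into trouble. Your (MN1) argument is essentially sound (additivity of $a\mapsto[\theta_a]$ for central $a$ makes $[\theta_a]$ $2$-torsion, the unique-involution hypothesis forces cyclic or generalized quaternion Sylow $2$-subgroups, both of which have trivial Schur multiplier, and restriction to a Sylow subgroup is injective on the $2$-primary part of $H^2(\Gamma,\bbC^\times)$). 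Your (MN3) computation $c(1,1)^2=\theta_1(1,1)=\widetilde{\omega}(a,a,a)=-1$ is also correct.

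The gap is (MN2). The condition there is that the specific character $\beta(1,1)\in\widehat{\Gamma}$ is a square, i.e.\ that it annihilates the $2$-torsion of $\Gamma^{\mathrm{ab}}$. This is not a cohomology-vanishing statement, so there is no ``parallel Sylow-$2$ argument'' to run: $\Gamma^{\mathrm{ab}}$ can perfectly well have nontrivial $2$-torsion under the unique-involution hypothesis (already $Q_8^{\mathrm{ab}}\cong(\Z/2)^2$, in which case no nontrivial character of $2$-power order is a square), so one must actually compute $\beta(1,1)$ and show it kills the right subgroup. That computation is the technical heart of the Main Theorem of \cite{MN3}, and your sketch does not supply it. Either carry out that verification or, as the paper does, simply invoke the Main Theorem of \cite{MN3} and let Corollary \ref{o4} do the translation.
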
 

 As explained in \cite{MN3}, the 2-torsion subgroup of $H^3(\Gamma,\bbC^\times)$ for such a group will be cyclic with the same order as any 2-Sylow subgroup of $\Gamma$, and the restriction $H^3(\Gamma,\bbC^\times)\to H^3(A,\bbC^\times)\cong\Z/2$  is nontrivial. So the condition in the Corollary that $[\widetilde{\omega}|_A]$ be nontrivial is equivalent to requiring that the class  $[\widetilde{\omega}]$ has full 2-order in $H^3(\Gamma,\bbC^\times)$.

What  does Corollary \ref{mn3} say about the mysterious obstruction $o_4$? For $A=\Z/2$ with trivial module-map, $o_4$ depends on both the sign of $c$  (which determines $\mathbf{Vec}_A^{(\omega,c)}$) and on $\Gamma$ (which tells us where we are on the $H^2$-torsor). Write $o_4(\Gamma,\pm)$ to emphasize that dependence. Choose any $\widetilde{\omega}$ with $[\widetilde{\omega}|_A]$ nontrivial. Then Corollary \ref{mn3} says $[o_4(\Gamma,\pm)]$ must vanish for the sign coming from (MN3). However, choose some integer $k\equiv -1$ (mod 4) and coprime to $|\Gamma|$. Then choosing $\widetilde{\omega}^k$ (as well as $t_a^k$ and $\nu^k$) gives the other sign in (MN3). So  Corollary \ref{mn3} implies $[o_4(\Gamma,\pm)]$ vanishes for both signs.

We can generalize much of this argument. Consider $A=\Z/p^n$ for any odd prime power. Then again there are up to MTC equivalence exactly two  MTC $\mathbf{Vec}_A^{(\omega,c)}$, determined by the value $c(1,1)=e^{2\pi \mathrm{i} \ell/p^n}$: whether or not $\ell$ is a quadratic residue mod $p$. For $A=\Z/2^n$, there are either two ($n=1$) or four ($n>1$) inequivalent MTC $\mathbf{Vec}_A^{(\omega,c)}$: write $c(1,1)=e^{2\pi \mathrm{i}\ell/2^{n+1}}$, then what matters is the value of $\ell$ (mod 4) respectively (mod 8). Again write $o_4(\Gamma,\ell)$ to cover these cases, where $\ell$ always must be coprime to $|A|$ (for nondegeneracy of the braiding).

\begin{proposition} \label{o4fails} Let $\Gamma$ be a central extension of $G$ by a cyclic group $A$ and let $\ell$ be any integer coprime to $|A|$. Then either $[o_4(\Gamma,\ell)]$ is trivial for all $\ell$, or nontrivial for all $\ell$.\end{proposition}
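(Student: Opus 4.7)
The plan is to mimic and extend the $A=\Z/2$ argument given immediately before the proposition, where raising $\widetilde{\omega}$ (together with the 1-cochains $t_a$ and $\nu$) to a carefully chosen odd power flipped the sign of the braiding while preserving (MN1)--(MN3). For cyclic $A$ of any order, I want to show that starting from any witness for $[o_4(\Gamma,\ell)]=0$ I can produce a witness for $[o_4(\Gamma,\ell')]=0$ for any other admissible $\ell'$, by raising all three pieces of data to the same, suitably chosen, $k$-th power. Reversing the roles of $\ell$ and $\ell'$ will then give the desired equivalence.

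By Corollary \ref{o4}, the hypothesis $[o_4(\Gamma,\ell)]=0$ furnishes $\widetilde{\omega}\in Z^3(\Gamma,\bbC^\times)$ and normalized 1-cochains $t_a,\nu$ satisfying (MN1)--(MN3), with induced braiding $c$ matching $c_\ell$, so $c(1,1)=e^{2\pi\mathrm{i}\ell/m}$ where $m=p^n$ for $A=\Z/p^n$ with $p$ odd and $m=2^{n+1}$ for $A=\Z/2^n$. The key observation to exploit is that each of the quantities appearing in (MN1)--(MN3) --- the 2-cocycle $\theta_a$ built from $\widetilde{\omega}$ via \eqref{thetaform}, the 2-cocycle $\beta$ built from $t$ and $\theta$ as in (MN2), and the braiding $c(a,b)=\nu(a)(b)/t_b(a)$ from (MN3) --- is multiplicative in its inputs. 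Hence the triple $(\widetilde{\omega}^k,t_a^k,\nu^k)$ automatically satisfies (MN1)--(MN3) for every integer $k$, with new braiding $c^k$ and $c^k(1,1)=e^{2\pi\mathrm{i}k\ell/m}$. Since a pointed MTC on a cyclic group is classified up to braided equivalence by its quadratic form $q(a)=c(a,a)$ (Eilenberg--Mac Lane), the new triple will witness a braided $G$-crossed structure realizing $\mathbf{Vec}_A^{(\omega_{\ell'},c_{\ell'})}$ precisely when $k\ell\equiv \ell'\pmod m$.

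It therefore remains to select $k$ coprime to $|\Gamma|$ (to ensure that $\widetilde{\omega}\mapsto\widetilde{\omega}^k$ is an automorphism of the torsion group $H^3(\Gamma,\bbC^\times)$, so $\widetilde{\omega}^k$ still parametrizes valid positions on the $H^2$- and $H^3$-torsors) and satisfying $k\ell\equiv \ell'\pmod m$. This is a standard CRT exercise: since $\ell^{-1}\ell'$ is a unit modulo $m$, the congruence $k\equiv \ell^{-1}\ell'\pmod m$ already forces $k$ coprime to every prime dividing $m$ (hence to $|A|$); to make $k$ also coprime to each remaining prime $p\mid|\Gamma|$, impose the auxiliary condition $k\equiv 1\pmod p$ and invoke CRT. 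I do not anticipate any genuine obstacle here --- the multiplicativity and the CRT bookkeeping are routine. The one subtlety worth pinning down is that matching of quadratic forms (not literal cohomological matching of $\widetilde{\omega}^k|_A$ with a preferred representative of $\omega_{\ell'}$) is what delivers the target MTC; this is covered by the Eilenberg--Mac Lane classification, combined if necessary with a standard coboundary adjustment of $\widetilde{\omega}^k$ so that its restriction to $A$ is on the nose the chosen cocycle $\omega_{\ell'}$ required by Corollary \ref{o4}.
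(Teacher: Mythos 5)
Your proposal is correct and follows essentially the same route as the paper: both take $k\equiv\ell'/\ell$ and observe that $(\widetilde{\omega}^k,t_a^k,\nu^k)$ again satisfies (MN1)--(MN3) with braiding $c^k=c_{\ell'}$, then invoke Corollary \ref{o4}. Your additional CRT step forcing $k$ coprime to $|\Gamma|$ is harmless but unnecessary, since Corollary \ref{o4} applies to any $\widetilde{\omega}^k$ restricting correctly to $A$, regardless of whether $k$ is invertible modulo $|\Gamma|$.
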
 

\begin{proof} Suppose $[o_4(\Gamma,\ell)]$ vanishes for some $\ell$ coprime to $|A|$, and write $\mathbf{Vec}_A^{(\omega,c_\ell)}$ for the corresonding MTC. Then by Corollary \ref{o4}, there exists an $\widetilde{\omega}$ for $\Gamma$ with $[\widetilde{\omega}|_A]=[\omega]$, which passes (MN1)-(MN3) for some choice of $t_a,\nu$ and $c_\ell$. Now choose any other $\ell'$ coprime to $|A|$ and write $k\equiv \ell'/\ell$ (mod $|A|$). Then $\widetilde{\omega}^k,t_a^k,\nu^k$ satisfy (MN1)-(MN3) for $c_{\ell'}$ (and also $\omega^k,c^k$ satisfy the constraints (8.11) of \cite{book}). Thus   $[o_4(\Gamma,\ell')]$ also vanishes.\end{proof}

The consequences of some $[o_4(\Gamma,\ell)]$ not vanishing is significant for VOAs. Suppose for concreteness this happened for $0\to\Z/2\to\mathrm{SL}_2(\Z/5)\to \mathrm{ Alt}_5\to 1$ for $\ell=-1$ (Proposition \ref{o4fails} says it doesn't). That would imply that any Alt$_5$ orbifold of $\cV_{E_7}$ would be trivial, in the sense that \textbf{Mod}$\,\,(\cV_{E_7})^{\mathrm{Alt_5}}\cong\mathbf{Mod}\,\,\cV_{E_7}\boxtimes\cZ(\mathbf{Vec}_{\mathrm{Alt}_5}^{\tilde{\omega}})$ for some $\widetilde{\omega}\in Z^3(\mathrm{Alt}_5,\bbC^\times)$, i.e.\ that orbifold would involve the trivial central extension $\Gamma=\Z/2\times \mathrm{Alt}_5$ and not the binary icosahedral SL$_2(\Z/5)$.

Incidentally, the largest exceptional finite subgroup of PSU(3) is Alt$_6$. This acts on the lattice VOA $\cV_{A_2}$, and corresponds to $\Gamma$ being the Valentiner group, which is the unique nontrivial  extension of Alt$_6$  by $\Z/3$. This plays the role of the icosahedral example for $\cV_{A_1}$.

Incidentally, \cite{MN3} give two explicit conjectures in this context, concerning orbifolds of $\cV_{A_1}$ and $\cV_{E_7}$. These are special cases and refinements  of Conjecture 2 stated earlier, and so are proved by our results (provided in the nonsolvable cases you assume Conjecture 1).

The trivial module-map case was also encountered in \cite{Lam}. More precisely, the Main Theorem there stated that if $G$ is a cyclic group of isometries of an even positive definite lattice $L$, and $\widehat{G}$ is its lift to automorphisms of $\cV_L$, then $(\cV_L)^{\hat{G}}$ is pointed iff the corresponding module-map is trivial. Lam used this to prove a conjecture by H\"ohn concerning the holomorphic VOAs of central charge 24. The proof of this in \cite{Lam} depended explicitly on the existence of $L$. Using our methods, we can generalize considerably one direction, whereas the other direction is more delicate:

\begin{proposition} \label{lam} Let $\cV$ be any pointed VOA and $G$ any group of automorphisms of $G$. Assume $G$ is solvable (or if $G$ is not solvable, assume Conjecture 1). If $\mathbf{Mod}\,\,\cV^G$ is pointed, then the module-map is trivial and $G$ is abelian. Conversely, if the module-map is trivial, then $\mathbf{Mod}\,\,\cV^G$ is pointed iff $\cZ(\mathbf{TwMod}_G\,\cV)$ is pointed. Even when $G$ is cyclic, $\mathbf{Mod}\,\,\cV^G$ may not be pointed. \end{proposition}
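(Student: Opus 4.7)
My plan is to handle the three assertions in turn.

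For the first assertion, I would argue by a Frobenius--Perron dimension count in the equivariantization $\mathbf{Mod}\,\cV^G\cong(\mathbf{TwMod}_G\,\cV)^G$. Its simple objects are parametrized by pairs $([X],\chi)$, where $[X]$ runs over the $G$-orbits of simples of $\mathbf{TwMod}_G\,\cV$ (with stabilizer $G_X\le G$) and $\chi$ is an irreducible $\theta_X$-projective character of $G_X$, the cocycle $\theta_X$ being determined by the $G$-action at $X$. The FPdim of the associated simple equals
\[\mathrm{FPdim}\bigl(([X],\chi)\bigr)=[G:G_X]\,\dim(\chi)\,\mathrm{FPdim}(X).\]
If $\mathbf{Mod}\,\cV^G$ is pointed then each such product equals $1$, forcing $G_X=G$, $\mathrm{FPdim}(X)=1$, and $\dim(\chi)=1$ for every admissible datum. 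The first two conditions make $\mathbf{TwMod}_G\,\cV$ into a pointed braided $G$-crossed extension of $\mathbf{Mod}\,\cV$, so Proposition \ref{conv} gives that the module-map is trivial. Applied at the tensor unit $X=\cV$, which is canonically $G$-fixed so that $\theta_\cV$ is trivial, the condition $\dim(\chi)=1$ says every ordinary irreducible complex character of $G$ has dimension $1$; therefore $G$ is abelian.

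For the converse assertion, assume the module-map is trivial. Proposition \ref{Z(CG)} then supplies a braided tensor equivalence
\[\cZ(\mathbf{TwMod}_G\,\cV)\;\cong\;\mathbf{Mod}\,\cV^G\boxtimes(\mathbf{Mod}\,\cV)^{\mathrm{rev}}.\]
A Deligne product of fusion categories is pointed iff each factor is pointed, and $(\mathbf{Mod}\,\cV)^{\mathrm{rev}}$ is already pointed because $\cV$ is; hence the centre on the left is pointed iff $\mathbf{Mod}\,\cV^G$ is.

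For the final assertion, the first part itself supplies the obstruction: any cyclic group of $\cV$-automorphisms whose module-map is nontrivial will fail to produce a pointed orbifold. A concrete example is $\cV=\cV_{A_2}$ with $G=\bbZ/2$ generated by the standard lift of the Weyl involution $v\mapsto-v$; since $u\cdot u\in 2\bbZ$ for every $u$ in the $A_2$ root lattice, Lemma \ref{voaaut} ensures this standard lift has order $2$, while the induced action on $A=L^*/L\cong\bbZ/3$ is negation, which is nontrivial.

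The hardest step will be the first paragraph: one must correctly invoke the simple-object description of the equivariantization and identify the cocycle at the unit object as trivial. Once that is in place, the converse is a formal consequence of Proposition \ref{Z(CG)}, and the third assertion reduces to the lattice example above.
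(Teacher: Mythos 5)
Your first two assertions are handled correctly. For the converse you use exactly the paper's argument (Proposition \ref{Z(CG)} plus multiplicativity of FPdim over Deligne products). For the forward direction you take a genuinely different route: the paper inducts each invertible simple of $\mathbf{Mod}\,\cV^G$ up to $\mathbf{TwMod}_G\,\cV$ and uses Frobenius reciprocity to conclude every twisted simple is invertible, then gets $G$ abelian from the structure $\mathbf{TwMod}_G\,\cV\cong\mathbf{Vec}_\Gamma^{\tilde\omega}$ with $\Gamma$ a central extension of $G$ by $A$ and the requirement that $\cZ(\mathbf{Vec}_\Gamma^{\tilde\omega})$ be pointed. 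Your orbit/stabilizer parametrization of the equivariantization's simples, with the FPdim count and the observation that the simples over the unit form a copy of $\mathbf{Rep}\,G$, reaches the same conclusions and is if anything more self-contained; both are valid.

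The genuine problem is the final assertion. In context (it follows the clause ``if the module-map is trivial,'' and the paper's proof says explicitly that $\cZ(\mathbf{D})$ may fail to be pointed \emph{even when $\Gamma$ is abelian}), the claim being made is that a cyclic $G$ with \emph{trivial} module-map can still produce a non-pointed orbifold --- this is precisely what shows that one direction of Lam's theorem fails when ``lattice VOA'' is relaxed to ``pointed VOA.'' Your $\cV_{A_2}$ example has a fixed-point-free (in particular nontrivial) module-map, so it only restates the contrapositive of your first paragraph and says nothing about the interesting regime. The example the paper intends is $A=\Z/n\oplus\Z/n$, $G=\Z/n$ acting with trivial module-map, $\Gamma=(\Z/n)^3$, and the $3$-cocycle $\widetilde{\omega}((a_1,a_2,a_3),(b_1,b_2,b_3),(c_1,c_2,c_3))=e^{2\pi\mathrm{i}a_1b_2c_3/n}$, for which $\cZ(\mathbf{Vec}_{(\Z/n)^3}^{\tilde\omega})$ is not pointed; Theorem \ref{recon} realizes this by a pointed VOA with a $\Z/n$-action. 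To complete your proof you should replace your counterexample with one of this kind.
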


\begin{proof} Let $\mathbf{D}=\mathbf{TwMod}_G\,\cV$, and suppose $\mathbf{D}^G=\mathbf{Mod}\,\,\cV^G$ is pointed. Let $x$ be any simple object in $\mathbf{D}$ (so it will be a simple current). Since induction from $\mathbf{D}^G$ to $\mathbf{D}$ is a tensor functor, Ind$(x)$ will be simple and invertible in $\mathbf{D}$ (with inverse Ind$(x^*)$). By Frobenius reciprocity, any simple in $\mathbf{D}$ arises as a subobject in some Ind$(x)$. Therefore every simple in $\mathbf{D}$ is invertible. Thus by Proposition \ref{conv} the  module-map is trivial.

Write $\mathbf{Mod}\,\,\cV=\mathbf{Vec}_A^q$. Suppose the module-map is trivial. Then by Theorem \ref{stable}, $\mathbf{D}=\mathbf{Vec}_\Gamma^{\tilde{\omega}}$ for some $\Gamma$ and $\widetilde{\omega}$. Since  $\cZ(\mathbf{D})\cong(\mathbf{Vec}_A^q)^{\mathrm{rev}}\boxtimes\mathbf{D}^G$, we see that  $\cZ(\mathbf{D})$ is pointed iff $\mathbf{D}^G$ is. This requires  $\Gamma$ (hence $G$) to be abelian, but $\cZ(\mathbf{D})$ may not be pointed even if $\Gamma$ is abelian, as we'll see shortly.\end{proof} 

It is interesting that one direction of the Main Theorem of \cite{Lam} fails if we generalize \textit{lattice} VOA there to \textit{pointed} VOA. For example,   if $A=\Z/n\oplus\Z/n$ and $G=\Z/n$ (for any $n$), choose $\Gamma=A\times G=\Z/n\times\Z/n\times\Z/n$; then it is possible to choose 3-cocycles so that $\mathbf{D}^G=\mathbf{Mod}\,\,\cV^G$ is not pointed. In particular choose $\widetilde{\omega}\in Z^3((\Z/n)^3,\bbC^\times)$ to be $\widetilde{\omega}((a_1,a_2,a_3),(b_1,b_2,b_3),(c_1,c_2,c_3))=e^{2\pi\mathrm{i} a_1b_2c_3/n}$. This restricts to the trivial cocycle on $A$ (and for any $n$, it is possible to find a $c$ such that $\mathbf{Vec}_{(\Z/n)^3}^{(1,c)}$ is a MTC). As explained in Section 6.2 of \cite{CGR}, $\cZ(\mathbf{Vec}_{(\Z/n)^3}^{\tilde{\omega}})$ is not pointed.

Nevertheless, in the setting of the trivial module-map, it is very limiting to restrict to cyclic orbifolds (as is typical in the VOA and conformal field theory  literature), because central extensions $\Gamma$ of cyclic groups are always abelian. To get nonabelian $\Gamma$ here, we need to allow noncyclic $G$.

\section{Fixed-point free actions and $G$-Tambara-Yamagami}

The other extreme is when the module-map $\rho$ is fixed-point free, i.e.\ 
when $G$ acts without fixed points on $A$. We will find shortly that this corresponds to a braided $G$-crossed extension of \textbf{Vec}$_A^q$  generalizing the Tambara-Yamagami ones:

\begin{definition}\label{Gty} Let $A$ be a finite abelian group. By a \emph{$G$-Tambara-Yamagami category $\mathbf{C}$ of type} $\mathcal{TY}_G(A)$ we mean a fusion category whose only simple objects (up to equivalence) are $a\in A$ as well as  a unique $M^g$ for each $g\in G$, $g\ne e$, and which obeys the fusions 
\begin{equation}a\otimes a'\cong aa'\,,\ a\otimes M^g\cong M^g\otimes a\cong M^g,\ M^g\otimes M^h=\left\{\begin{matrix}\oplus_{a\in A}a&\mathrm{if}\ g=h^{-1}\cr \sqrt{|A|}M^{gh}&\mathrm{otherwise}\end{matrix}\right.\label{gTY}\end{equation}\end{definition}

For $G=\Z/2$ this recovers the standard Tambara-Yamagami categories \cite{TY}  (and nothing more). $\Z/3$-Tambara-Yamagami categories are discussed in \cite{JL}, where they used  \cite{ENO} to show they exist iff $A\cong A'\oplus A'$ for some abelian group $A'$.
Note that $\mathcal{TY}_G(A)$-type categories are automatically $G$-graded, with trivial component tensor-equivalent to \textbf{Vec}$_A^\omega$ for some $\omega\in Z^3(A,\bbC^\times)$, and with a unique $g$-graded simple for each $g\ne e$.  We are interested in braided $G$-crossed extensions of some MTC \textbf{Vec}$_A^q$, which are of $G$-Tambara-Yamagami type. \cite{JL} did not address the question of braided $\Z/3$-crossed structure on their categories.

For $A=0$ the $G$-Tambara-Yamagami categories are  $\mathbf{Vec}_G^\omega$ for some $\omega\in Z^3(A,\bbC^\times)$.

 The following is immediate:
\begin{lemma} \label{ty1} Suppose there is a category \textbf{C} of  type $\mathcal{TY}_G(A)$, which is a braided $G$-crossed extension of  some MTC $\mathbf{Vec}_A^q$. Then: 

\smallskip\noindent$\mathbf{(a)}$ $|A|\equiv 1$ (mod $|G|$) and, if $|G|>2$, $\sqrt{|A|}\in\bbZ$;

\smallskip\noindent$\mathbf{(b)}$ as a fusion category, $\mathbf{Vec}_A^q$ is $\mathbf{Vec}_A^\omega$ where $[\omega]$ is trivial in $H^3(A,\bbC^\times)$;

\smallskip\noindent$\mathbf{(c)}$ as long as $|A|>1$, $\mathbf{C}\not\cong\mathbf{Rep}(H)$ as a fusion category, for any finite group $H$;

\smallskip\noindent$\mathbf{(d)}$ there are exactly $|H^3(G,\bbC^\times)|$  braided $G$-crossed extensions of  $\mathbf{Vec}_A^q$ with the same action of $G$ on $A$ as $\mathbf{C}$ has.\end{lemma}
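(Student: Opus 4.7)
The plan is to handle (a)--(d) in sequence, with (c) being the main challenge. For (a), I would first invoke Proposition \ref{rankDg}: for $g\neq e$ in $G$, the grading piece $\mathbf{C}_g$ contains the unique simple $M^g$, so $\mathrm{rank}(\mathbf{C}_g)=1$, which equals the number of simples of $\mathbf{Vec}_A^q$ fixed by the module-map action of $g$; the tensor unit is always fixed, so no other element of $A$ is. Thus $G$ acts freely on $A\setminus\{e\}$, and orbit-counting gives $(|A|-1)\in|G|\bbZ$, so $|A|\equiv 1\pmod{|G|}$ and equivalently $\gcd(|A|,|G|)=1$ (which will be needed in (d)). For $|G|>2$, I would pick $g\in G\setminus\{e\}$ and $h\in G\setminus\{e,g^{-1}\}$ so that $gh\neq e$; the fusion $M^g\otimes M^h=\sqrt{|A|}\,M^{gh}$ then has an integer-valued multiplicity, forcing $\sqrt{|A|}\in\bbZ$.

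For (b), the identity $a\otimes M^g\cong M^g$ for every $a\in A$ makes $M^g$ (for any fixed $g\neq e$) into a rank-one indecomposable module category over $\mathbf{Vec}_A^\omega\subseteq\mathbf{C}_e$. Ostrik's classification of such module categories then forces $[\omega|_A]=0$ in $H^3(A,\bbC^\times)$: a choice of isomorphisms $\alpha_a:a\otimes M^g\to M^g$, combined with the associator of $\mathbf{C}$, yields a $2$-cochain $\gamma\in C^2(A,\bbC^\times)$ with $\delta\gamma=\omega|_A$.

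The main obstacle is (c), which I would prove by contradiction using character theory. Assume $\mathbf{C}\cong \mathbf{Rep}(H)$ as fusion categories; identify $A\cong\widehat{H^{\mathrm{ab}}}$ and let $\rho_g\in\mathbf{Rep}(H)$ correspond to $M^g$, of dimension $d=\sqrt{|A|}$. From $\rho_g\otimes\rho_g^*\cong\oplus_\psi\psi$ I get $|\chi_{\rho_g}(h)|^2=|A|$ for $h\in[H,H]$ and $0$ otherwise; the extremal character bound $|\chi(h)|\le\dim\chi$ then forces $\rho_g(h)$ to be scalar on $[H,H]$, i.e.\ $\rho_g|_{[H,H]}=d\,\lambda_g$ for an automatically $H$-invariant character $\lambda_g\in\widehat{[H,H]}$. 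Evaluating characters on $[H,H]$ of both sides of $\rho_g\otimes\rho_h\cong\sqrt{|A|}\,\rho_{gh}$ gives $\lambda_g\lambda_h=\lambda_{gh}$, so $\lambda:G\to\widehat{[H,H]}$ is a homomorphism; requiring no extra simples to appear in that fusion forces $\lambda$ injective. Counting yields $|H|=|G||A|$, $|H^{\mathrm{ab}}|=|A|$, and $|[H,H]|=|G|$, so $\lambda$ is an isomorphism; in particular $G$ is abelian and $[H,H]$ is abelian of order $|G|$. The $H$-invariance of every $\lambda_g$, combined with surjectivity of $\lambda$, forces the conjugation action of $H$ on $[H,H]$ to be trivial, so $[H,H]\subseteq Z(H)$. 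Now $\gcd(|G|,|A|)=1$ from (a) lets Schur--Zassenhaus split $H=[H,H]\rtimes H^{\mathrm{ab}}$; triviality of the action collapses this to an abelian direct product, contradicting $[H,H]\cong G\neq\{e\}$ (the $\mathcal{TY}_G(A)$ structure with $|A|>1$ has $|G|>1$).

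Finally, for (d), I would appeal to the Etingof--Nikshych--Ostrik gauging theory from Section 2.5. Coprimality $\gcd(|A|,|G|)=1$ from (a) kills $H^n(G,A)$ for all $n>0$, so $o_3(\rho)\in H^3(G,A)$ vanishes automatically and the $H^2(G,A)$-torsor of categorical lifts collapses to a single lift $\underline{\rho}$. The obstruction $o_4(\underline{\rho})\in H^4(G,\bbC^\times)$ must vanish because the given $\mathbf{C}$ is itself a braided $G$-crossed extension realizing $\underline{\rho}$, so the remaining extensions sharing this module-map form a torsor over $H^3(G,\bbC^\times)$, yielding precisely $|H^3(G,\bbC^\times)|$ of them.
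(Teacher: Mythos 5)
Your proposal is correct. Parts (a) and (d) follow essentially the same route as the paper: Proposition \ref{rankDg} gives fixed-point freeness on $A\setminus\{0\}$, orbit counting gives $|A|\equiv 1 \pmod{|G|}$ (hence coprimality), the fusion multiplicity $\sqrt{|A|}\,M^{gh}$ forces integrality when $|G|>2$, and for (d) coprimality kills the $H^2(G,A)$-torsor while the existence of $\mathbf{C}$ itself witnesses the vanishing of $o_3$ and $o_4$, leaving only the $H^3(G,\bbC^\times)$-torsor. For (b) the paper simply cites Lemma 2.6 of \cite{JL}, whereas you reprove it directly by observing that $M^g$ is a rank-one module category over $\mathbf{Vec}_A^\omega$, whose module associativity constraint trivializes $[\omega|_A]$; this is a legitimate self-contained substitute. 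The real divergence is in (c): the paper identifies $G$ with a quotient of the universal grading group $\widehat{Z(H)}$ of $\mathbf{Rep}(H)$ to place $G$ centrally, then shows $[H,H]=G$ by an order argument, whereas you extract the same structural conclusion ($[H,H]$ abelian of order $|G|$, central in $H$, isomorphic to $G$) purely from character theory — the identity $|\chi_{\rho_g}(h)|^2=|A|$ on $[H,H]$ forcing $\rho_g$ to be scalar there, and the resulting homomorphism $\lambda:G\to\widehat{[H,H]}$ being an isomorphism of $H$-invariant characters. Both proofs then finish identically: coprimality splits the central extension and forces $H$ abelian, contradicting $\dim\rho_g=\sqrt{|A|}>1$. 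Your route avoids invoking the universal grading group of $\mathbf{Rep}(H)$ at the cost of a longer character computation; both implicitly assume $G$ is nontrivial, as does the paper.
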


\begin{proof}  That $\sqrt{|A|}\in\bbZ$, follows from \eqref{gTY} for any choice $g,h\ne e$ with $gh\ne e$.
 Also, Proposition \ref{rankDg} says that each nontrivial $g\in G$ must act fixed-point freely on $A$ (since there is a unique $g$-graded simple). Thus each nontrivial $G$-orbit in $A$ has full cardinality $|G|$, i.e.\ $|A|\equiv 1$ (mod $|G|$). Part (b) is Lemma 2.6 in \cite{JL}. For part (d), the torsor over $H^2$ is trivial since $|A|$ and $|G|$ are coprime (by (a)), so by \cite{ENO} we only have the $H^3$ torsor.
 
 To prove (c), suppose for contradiction that $\mathbf{C}\cong\mathbf{Rep}(H)$. Note that $|H|=\sum_{a}1^2+\sum_{g\ne e}\sqrt{|A|}^2=|A|\,|G|$. Since \textbf{Rep}$(H)$ must be graded by the centre $Z(H)$ (more precisely, by $\widehat{Z(H)}\cong Z(H)$), we must have that $G\hookrightarrow Z(H)$ (since \textbf{C} is graded by $G$). Now, $H/[H,H]\cong A$ (the group of 1-dimensional representations of $H$), so $|[H,H]|=|G|$. Thus any $h\in [H,H]$ has order dividing $|G|$, so $G$ central implies $\langle h,G\rangle$ has order dividing both $|G|^\infty$ and $|H|=|A|\,|G|$, and we get $|\langle h,G\rangle|=|G|$ as $|A|$ is coprime to $|G|$. Hence $[H,H]=G$. Therefore $H$ is a central extension of $A$ by $G$, i.e.
 \begin{equation}e\to G\to H\to A\to 0\label{gpext2}\end{equation}
 (the flip of \eqref{gpext}!) But $H^2(G,A)=0$ since $|A|$ and $|G|$ are coprime, so Eilenberg-Mac Lane says \eqref{gpext2} splits. Then $G$ central forces $H\cong G\times A$, i.e.\ $H$ is abelian, contradicting the existence of $H$-irreps of dimension $\sqrt{|A|}$.\end{proof} 
 
 Because the obstruction $o_3$ and the $H^2$-torsor both vanish, the group extension $\Gamma$ promised by \eqref{gpext} is $A\sdprod G$. Such semidirect products, where $G$ acts fixed-point freely on $A$, are examples of Frobenius groups. We will return to this in Section 5.2.
  
\begin{theorem} \label{GTy} Suppose $\cV$ is a pointed VOA with $\mathbf{Mod}\,\,\cV=\mathbf{Vec}_A^q$, and $G$ a group of $\cV$-automorphisms. Suppose the  module-map $\rho$ acts fixed-point freely on $A$. Then the braided $G$-crossed extension $\mathbf{TwMod}_G\,\cV$ of $\mathbf{Vec}_A^q$  is $G$-Tambara-Yamagami. \end{theorem}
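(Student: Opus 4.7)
My plan is to deduce the fusion rules \eqref{gTY} directly from Theorem \ref{voathm} and Proposition \ref{rankDg}, together with a short duality argument. Write $\mathbf{D}=\mathbf{TwMod}_G\,\cV$; by Theorem \ref{voathm} this is a faithful braided $G$-crossed extension of $\mathbf{D}_e=\mathbf{Vec}_A^q$. The hypothesis that $\rho:G\to\mathrm{O}(A,q)$ acts fixed-point freely means that for each non-identity $g\in G$, no nonzero $a\in A$ is fixed by $\rho(g)$; since the tensor unit $0\in A$ is the only simple of $\mathbf{Vec}_A^q$ fixed by $\rho(g)$, Proposition \ref{rankDg} forces $\mathrm{rank}(\mathbf{D}_g)=1$ and $\mathrm{FPdim}(\mathbf{D}_g)=|A|$. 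Writing $M^g$ for the unique simple of $\mathbf{D}_g$, we immediately get $\mathrm{FPdim}(M^g)=\sqrt{|A|}$.

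Now I read off the fusions. The relation $a\otimes a'\cong aa'$ holds by definition inside $\mathbf{D}_e$. For any $a\in A$ and $g\ne e$, $a\otimes M^g$ lies in $\mathbf{D}_g$ and so must be a multiple of $M^g$; comparing Frobenius--Perron dimensions gives $a\otimes M^g\cong M^g$, and similarly on the right. For $g,h\in G\setminus\{e\}$ with $gh\ne e$, $M^g\otimes M^h\in\mathbf{D}_{gh}$ is a multiple of the unique simple $M^{gh}$, and the identity $\sqrt{|A|}\cdot\sqrt{|A|}=\sqrt{|A|}\cdot n$ forces $M^g\otimes M^h\cong\sqrt{|A|}\,M^{gh}$.

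The only case that needs a separate argument is $M^g\otimes M^{g^{-1}}$, which lies in $\mathbf{D}_e=\mathbf{Vec}_A^q$. Since $\mathbf{1}\hookrightarrow X\otimes X^*$ lives in $\mathbf{D}_e$, duals in a $G$-graded fusion category reverse the grading, so $(M^g)^*$ is a simple of $\mathbf{D}_{g^{-1}}$ and hence $(M^g)^*\cong M^{g^{-1}}$. Then for each $a\in A$, Frobenius reciprocity gives
\[
\dim\mathrm{Hom}(M^g\otimes M^{g^{-1}},a)\;=\;\dim\mathrm{Hom}(M^g,a\otimes M^g)\;=\;\dim\mathrm{End}(M^g)\;=\;1,
\]
using $a\otimes M^g\cong M^g$ from the previous paragraph. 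Thus every $a\in A$ appears in $M^g\otimes M^{g^{-1}}$ with multiplicity one, and comparing FPdims (both sides equal $|A|$) shows there are no other summands, so $M^g\otimes M^{g^{-1}}\cong\bigoplus_{a\in A}a$. This exhausts all the fusions in Definition \ref{Gty}, so $\mathbf{D}$ is $G$-Tambara--Yamagami. There is no serious obstacle here, since the fixed-point-free hypothesis is perfectly tailored to pin down $\mathrm{rank}(\mathbf{D}_g)=1$ for all $g\ne e$, after which everything is forced by dimension counting and duality.
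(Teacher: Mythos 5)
Your proof is correct and follows essentially the same route as the paper's: Proposition \ref{rankDg} plus fixed-point freeness gives a unique simple $M^g$ in each nontrivial graded piece, the grading pins down $a\otimes M^g$, and a rigidity/duality argument identifies $M^g\otimes M^{g^{-1}}\cong\oplus_{a\in A}a$. The only cosmetic differences are that you extract $\mathrm{FPdim}(M^g)=\sqrt{|A|}$ directly from Proposition \ref{rankDg} rather than from the fusion $M^g\otimes M^{g^{-1}}$, and you phrase the multiplicity computation via Frobenius reciprocity where the paper tensors the copy of the unit in $M^g\otimes M^{g^{-1}}$ by the invertibles $a$.
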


\begin{proof} We know \textbf{TwMod}$_G\,\cV$ is a  braided $G$-crossed extension  of $\mathbf{Vec}_A^q$, by Theorem \ref{voathm}. The uniqueness of the $g$-twisted module $M^g$ follows from Proposition \ref{rankDg}. The $G$-grading then requires $a\otimes M^g\cong M^g\otimes a\cong M^g$. By rigidity, $0\in M^g\otimes M^{g^{-1}}$ with multiplicity 1, so $a\in a\otimes M^g\otimes M^{g^{-1}}\cong M^g\otimes M^{g^{-1}}$, also with multiplicity 1. Hence $ M^g\otimes M^{g^{-1}}\cong \oplus_{a\in A}a$. Since each $a$ is invertible, each FPdim$(a)=1$, so each FPdim$(M^g)=\sqrt{|A|}$ and hence $M^g\otimes M^h\cong \sqrt{|A|} M^{gh}$ when $gh\ne e$. \end{proof}

We see from this argument why the multiplicity $\sqrt{|A|}$ appears in \eqref{gTY}. Of course \textbf{Mod}$\,\,\cV^G$ will be the $G$-equivariantization $( \mathbf{TwMod}_G\,\cV)^G$.

\subsection{Warm-up: standard Tambara-Yamagami}

The special case where $G=\Z/2$ is well-understood. The complete list of  fusion categories realizing \eqref{gTY} for $G=\Z/2$ are the Tambara-Yamagami categories $\mathbf{TY}_\pm(A,q)$ for some metric group $(A,q)$ \cite{TY}. Here, the sign $\pm$ should be regarded as an element of $H^3(\Z/2,\C^\times)\cong\Z/2$ (the torsor over $H^2$ is trivial).  

For $G=\Z/2$ and $|A|$ odd (the relevant choice for us when $G=\Z/2$), $\rho(a)=-a$ will be a fixed-point free module-map for any \textbf{Vec}$_A^q$. Indeed, it is the only such map  for  
$G=\Z/2$ (see e.g.\ Corollary 3.2 of \cite{Scho} for a generalization). So for $G=\Z/2$ (and $|A|$ odd), Theorem \ref{GTy} recovers nothing more than the standard Tambara-Yamagami.

For $|A|$ odd, the fusion category  $\mathbf{TY}_\pm(A,q)$ has a unique structure as a braided $\Z/2$-crossed category.
The modular data for the equivariantization $\mathbf{TY}_\pm(A,q)^{\Z/2}$ was computed in \cite{EGty}. There it was shown that for any sign and metric group $(A,q)$, a lattice VOA $\cV_L$ can be found with \textbf{Mod}$\,\,\cV_L=\mathrm{\textbf{Vec}}_A^{q}$,  and with a $\Z/2$-action on  $\cV_L$ for which \textbf{TwMod}$_{\Z/2}\,\cV_L= \mathbf{TY}_\pm(A,q)$.

A quasi-Hopf algebra realization of $\mathbf{TY}_\pm(A,q)$ requires $\sqrt{|A|}=\mathrm{FPdim}(M^1)$ to be an integer. \cite{Tam} shows that there is in fact a Hopf algebra realization iff $\sqrt{|A|}\in\Z$, the plus sign is chosen, and $q$ is `hyperbolic'.

\cite{GLM} studies $\Z/2$-orbifolds explicitly and in general, including some $A$ infinite examples, and how VOA extensions commute with the orbifold.  

\subsection{General results on fixed-point free actions on metric groups}

We begin with some applications of standard results on Frobenius groups. An accessible treatment of the latter is in \cite{Mayr}.

\begin{lemma} \label{ty2} Let $(A,q)$ be a metric group, and $\rho\,{:}\,G\to \mathrm{O}(A,q)$ be a fixed-point free homomorphism. 

\smallskip\noindent$\mathbf{(a)}$ Any subgroup of $G$ of square-free order, or of order $p^2$ ($p$ prime),   must be cyclic.

\smallskip\noindent$\mathbf{(b)}$ The only nilpotent $G$ are cyclic, or the direct product of an odd cyclic with generalized quaternions $Q_{2^k}=\langle  a,b|b^4=1,a^{2^{k-2}}=b^2,ab=ba^{-1}\rangle$.

\smallskip\noindent$\mathbf{(c)}$  We can write $\mathbf{Vec}_A^q$ as the Deligne product of $\mathbf{Vec}_{A_i}^{q_i}$ where $A_i=(\Z/p_i^{k_i})^{n_i}$ and $\rho$ restricts to fixed-point free homomorphisms $G\to\mathrm{O}(A_i,q_i)$.\end{lemma}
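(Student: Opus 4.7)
The plan is to derive (a) from the standard character theory of fixed-point-free actions, deduce (b) from (a) via the classification of $p$-groups, and prove (c) by combining an orthogonal prime decomposition with the structure theory of finite $\Z_p[G]$-modules when $\gcd(|G|,p)=1$.

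For (a), any fixed-point-free action of a finite group $H$ on a nontrivial finite abelian $A$ forces $\gcd(|H|,|A|)=1$, since otherwise a Cauchy-type count on the $p$-Sylow of $A$ (for any common prime $p$) would yield a nonzero fixed point. Hence $A$ decomposes isotypically under $H$, and fixed-point-freeness requires that on each irreducible subrepresentation over $\overline{\Q}$ no nontrivial $h\in H$ has $1$ as an eigenvalue. If $H=(\Z/p)^2$, irreducibles are characters $\chi\colon H\to\overline{\Q}^\times$ and the eigenvalue condition forces $\chi$ injective on $H$; but its image is cyclic, contradicting noncyclicity of $H$. If $H=\Z/q\rtimes\Z/p$ is the nonabelian group of order $pq$, the one-dimensional irreducibles factor through $H/\langle y\rangle$ and make $y$ act trivially, while the $p$-dimensional irreducibles induced from nontrivial characters of $\langle y\rangle$ have character value $0$ at an order-$p$ generator $x$, forcing the eigenvalues of $\rho(x)$ to exhaust $\mu_p$ and hence contain $1$. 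Both cases contradict fixed-point-freeness. For general square-free order, iterate these pairwise arguments, or cite Zassenhaus' classification of Frobenius complements.

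Part (b) is then immediate: $G$ nilpotent decomposes as $G=\prod_p S_p$, each Sylow $p$-subgroup $S_p$ has every order-$p^2$ subgroup cyclic by (a), and the standard classification of $p$-groups with this property forces $S_p$ to be cyclic (odd $p$) or cyclic/generalized quaternion $Q_{2^k}$ ($p=2$). Combining these yields exactly the listed alternatives.

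For (c), begin with the orthogonal $G$-equivariant prime decomposition $A=\bigoplus_p A_p$: orthogonality because the bicharacter $b$ associated to $q$ has order dividing $\gcd(\mathrm{ord}(a),\mathrm{ord}(a'))$, which is $1$ for elements of distinct prime orders, and $G$-invariance because Sylow subgroups are characteristic. Fixed-point-freeness restricts to each $A_p$, forcing $\gcd(|G|,p)=1$ whenever $A_p\neq 0$. Within $A_p$, invoke the structure theory of finite $\Z_p[G]$-modules: since $\gcd(|G|,p)=1$, $\Z_p[G]$ is a maximal $\Z_p$-order in the semisimple $\Q_p$-algebra $\Q_p[G]$, and its finite modules decompose as direct sums of primitive pieces $V_i^{(k)}$ indexed by a simple $\Q_p[G]$-module $V_i$ and an exponent $p^k$, each homocyclic as an abelian group (the local extensions of $\Z_p$ that appear are unramified, as each simple factor of $\Q_p[G]$ has centre $\Q_p(\zeta_m)$ with $m\mid|G|$ coprime to $p$). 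This produces a $G$-equivariant homocyclic decomposition $A_p=\bigoplus_{i,k}V_i^{(k)}$. Orthogonality now follows from Schur's lemma applied to the $G$-invariant bicharacter $b$: the pairing between $V_i^{(k)}$ and $V_j^{(l)}$ vanishes unless $V_j^{(l)}$ is $G$-isomorphic to the dual of $V_i^{(k)}$, so grouping each $V_i^{(k)}$ with its $G$-dual into a single block (when these are distinct) yields an orthogonal $G$-equivariant decomposition into homocyclic summands of type $(\Z/p^k)^{n}$. Fixed-point-freeness descends to each summand since a nonzero $G$-fixed element there would be a nonzero $G$-fixed element of $A$.

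The main obstacle is reconciling orthogonality (which comes from $q$) with $G$-equivariance (which comes from $\rho$) inside a single $A_p$; naive $G$-averaging of an orthogonal homocyclic decomposition need not preserve orthogonality, and conversely orthogonal decompositions are not canonical. The approach above sidesteps this by first producing $G$-equivariant homocyclic pieces from the maximal-order structure of $\Z_p[G]$, and only then using Schur on the $G$-invariant bicharacter to collect these pieces into mutually orthogonal blocks of the required homocyclic type.
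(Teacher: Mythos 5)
Parts (a) and (b) are essentially fine. The paper disposes of both by citing Mayr's thesis; your route---coprimality of $|G|$ and $|A|$ via the orbit count, the eigenvalue argument killing $(\Z/p)^2$ and the nonabelian group of order $pq$, and then the classification of $p$-groups all of whose order-$p^2$ subgroups are cyclic---is a legitimate, more self-contained alternative. One small repair: $A$ is a finite group, not a $\overline{\Q}$-vector space, so the eigenvalue argument should be run on $A[\ell]\otimes\overline{\F}_\ell$ for a prime $\ell$ dividing $|A|$ (coprimality makes ``no nonzero fixed point'' equivalent to ``$1$ is not an eigenvalue'' there, and the trace-zero argument for the induced representation is cleanest via Mackey: its restriction to $\langle x\rangle$ is the regular representation, which contains the trivial one). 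The primary decomposition $A=\oplus_p A_p$ at the start of (c) also matches the paper.

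The genuine gap is in the final orthogonality step of (c). The central idempotents $e_i\in\Z_p[G]$ do give that $V_i^{(k)}$ pairs trivially with $V_j^{(l)}$ when the simple types satisfy $j\neq i^*$, but Schur's lemma gives nothing when $j=i^*$ and $l\neq k$: these pieces are not simple, and there are plenty of nonzero $G$-maps between homocyclic modules of the same simple type and different exponents (e.g.\ reduction $\mathcal{O}/p^k\to\mathcal{O}/p^l$ for $l<k$; concretely, $\Z/3$ acting fixed-point-freely on $\Z/49\oplus\Z/7$ by $(x,y)\mapsto(18x,4y)$ has both summands of the same simple $\Z_7[\Z/3]$-type with reduction mod $7$ a nonzero equivariant map). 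Correspondingly, the cross-pairing between pieces of different exponents need not vanish: on $\Z/p^2\oplus\Z/p$ with the split diagonal bicharacter, replacing the generator $(1,0)$ of the first summand by $(1,1)$ yields another homocyclic decomposition in which the two summands pair nontrivially. Since the maximal-order decomposition of $A_p$ is unique only up to isomorphism---not as a set of submodules---nothing forces the particular decomposition you chose to be block-orthogonal across exponents. This is precisely what the paper's proof is built to handle: via Mayr's Lemma 6.4 it splits off a single $G$-invariant homocyclic summand $A_1$ of \emph{maximal} exponent, checks that $q|_{A_1}$ is non-degenerate (which uses that everything outside $A_1$ has strictly smaller exponent), passes to $A_1^\perp$, and iterates downward in exponent. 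Your argument needs that descending-exponent induction (or an equivalent device) inserted between the maximal-order decomposition and the duality pairing; as written, the claimed vanishing across different exponents is both unproven and false for a general choice of decomposition.
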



\begin{proof}  Part (a) is Corollary 3.18 resp.\  Theorem 3.7(a) in \cite{Mayr}. A nilpotent group is the direct product of its $p$-Sylow subgroups. So
part (b) follows from Theorem 3.7(b) in \cite{Mayr}. 

Write $A=\oplus_jA_{(j)}$, where $A_{(j)}$ is the (unique) $p_j$-Sylow subgroup of $A$ ($p_j$ being the distinct prime divisors of $|A|$). Let $q_j$ resp.\ $\rho_j$ be the restriction of $q$ resp.\ $\rho$ to $A_{(j)}$. Then each $(A_{(j)},q_j)$ is a metric group (its non-degeneracy etc is inherited from that of $q$). Therefore \textbf{Vec}$_A^q$ is the Deligne product of \textbf{Vec}$_{A_{(j)}}^{q_j}$. Moreover, each $\rho_j(g)$ sends $A_{(j)}$ to itself (since it must preserve order of elements), so $\rho_j\,{:}\,G\to\mathrm{O}(A_{(j)},q_j)$. Finally, each $\rho_j$ is fixed-point free since $\rho$ is. 

Thus to prove part (c) it suffices to consider when $A$ is a $p$-group. Let $p^k$ be the exponent of $A$. Lemma 6.4 of \cite{Mayr} says we can write $A= A_1\oplus B$ in such a way that $A_1\cong(\Z/p^{k})^{n}$ is homocyclic (i.e.\ a product of isomorphic cyclic groups) of maximal exponent, $\rho(g)(A_1)=A_1$ for all $g$, and $B$ has exponent $<p^k$. Let $\rho_1$ resp.\ $q_1$ be the restriction of $\rho$ resp.\ $q$ to $A_1$. Note that $q_1$ is non-degenerate (since $q$ is), so $(A_1,q_1)$ is a metric group and $\rho_1\,{:}\,G\to\mathrm{O}(A_1,q_1)$ is fixed-point free. Let $A_1^\perp$ be the complement in $A$ of $A_1$ with respect to $q$. Then $A_1\cap A_1^\perp=0$ so $A=A_1\oplus A_1^\perp$ and the exponent of $A_1^\perp$ is $<p^{k}$. Let $\rho'$ resp.\ $q'$ be the restriction to $A_1^\perp$ of $\rho$ resp.\ $q$. Then $(A_1^\perp,q')$ is a metric group, and $\rho'\,{:}\,G\to\mathrm{O}(A_1^\perp,q')$ is fixed-point free. Now repeat the argument for $A_1^\perp$ to define subgroups $A_2,A_3$ and so on. 
\end{proof}

Conversely, if  $\rho_i\,{:}\,G\to\mathrm{O}((\Z/p_i^{k_i})^{n_i},q_i)$ are fixed-point free homomorphisms on homocyclic groups, then their product gives a fixed-point free homomorphism $\rho\,{:}\,G\to\mathrm{O}(\oplus_i(\Z/p_i^{k_i})^{n_i},\prod_iq_i)$.
Thus it suffices to restrict attention henceforth to $A=(\Z/p^k)^n$. 

Incidentally, the generalized quaternions have cohomology groups $H^3(Q_{2^k},\bbC^\times)\cong\Z/2^k$ and $H^4(Q_{2^k},\bbC^\times)=0$.

Write $\xi_n=\exp(2\pi \mathrm{i}/n)$. The possible metric groups are classified in e.g.\ \cite{Niku}. For $p$ odd, fix a nonquadratic residue $m$ of $p$. Then up to MTC-equivalence any \textbf{Vec}$_{\Z/p^k}^q$ is equivalent  to either \textbf{Vec}$_{\Z/p^k}^+$ (with $q(\ell)=\xi_{p^k}^{\ell^2}$) or \textbf{Vec}$_{\Z/p^k}^-$ (with $q(\ell)=\xi_{p^k}^{m\ell^2}$). We can write any \textbf{Vec}$_{(\Z/p^k)^n}^q$ (up to equivalence) as a Deligne product of \textbf{Vec}$_{p^k}^{s_i}$, $s_i=\pm$. Since \textbf{Vec}$_{\Z/p^k}^{-}\boxtimes\mathbf{Vec}^-_{\Z/p^k}\cong \mathbf{Vec}_{\Z/p^k}^{+}\boxtimes\mathbf{Vec}^+_{\Z/p^k}$, we can insist that at most one of the signs be $-$.

When $A$ is a 2-group, the only \textbf{Vec}$_A^q$ compatible with Lemma \ref{ty1}(b) are Deligne products of copies of  \textbf{Vec}$_{(\Z/2^k)^2}^\pm$, where `$+$' corresponds to quadratic form $q_+(a,b)=ab/2^k$ and `$-$' corresponds to $q_-(a,b)=(a^2+ab+b^2)/2^k$.

\subsection{Realization of $G$-Tambara-Yamagami by VOAs}

As explainded last subsection, we can restrict to $A$ of the form $(\Z/p^k)^n$.
There are many $G$ which have braided $G$-crossed categories of type $\mathcal{TY}_G(A)$ -- e.g.\ the order 120 group SL$_2(\Z/5)$ is the smallest nonsolvable $G$ for which a fixed-point free $\rho\,{:}\,G\to\mathrm{O}(A,q)$ can be found.  But thanks to Lemma \ref{ty2} most of the ones of smaller order will be cyclic or a direct product of an odd cyclic with generalized quaternions. In this subsection we restrict to these $G$, and find for each prime power $p^k$ an $n\ge 1$ and a lattice $L$ such that \textbf{Mod}$\,\,\cV_L\cong\mathbf{Vec}_{(\Z/p^k)^n}^q$ for some $q$, and a group of $\cV_L$-automorphisms isomorphic to $G$ whose module-map $\rho\,{:}\,G\to \mathrm{O}((\Z/p^k)^n,q)$ is fixed-point free. Then by Theorem \ref{GTy}, \textbf{TwMod}$_G\,\cV_L$ will be a braided $G$-crossed extension of \textbf{Vec}$_{(\Z/p^k)^n}^q$ of type $\mathcal{TY}_G(A)$. Our suspicion is that the $n$ we find may be the  smallest possible. 

The easiest source of such VOA-actions are \textit{signed permutation representations}. This is a matrix representation $\rho$ where each $\rho(g)$ is a signed permutation matrix, i.e.\ \begin{equation}\label{spr}\rho(g)(u_1,u_2,\ldots,u_n)=(s_{g,1}u_{\pi_g^{-1}1},s_{g,2}u_{\pi_g^{-1}2},\ldots,s_{g,n}u_{\pi_g^{-1}n})\end{equation}  where $\pi_g\in\mathrm{Sym}(n)$ and each $s_{g,i}\in\{\pm 1\}$.  We say $\rho$ is fixed-point free if no $\rho(g)$, for $g\ne e$, has 1 as an eigenvalue.

\begin{lemma} \label{L:permrep}  Let $\rho$ be an $n$-dimensional signed permutation representation of a finite group $G$. Choose any MTC $\mathbf{Vec}_A^q$, where $|A|$ is coprime to $|G|$. Then there is a lattice VOA $\cV$ with $\mathbf{Mod}\,\,\cV\cong(\mathbf{Vec}_A^q)^{\boxtimes n}:=\mathbf{Vec}_A^q\boxtimes\cdots\boxtimes\mathbf{Vec}_A^q$ ($n$ times) as a MTC, and $G$ acts as automorphisms on $\cV$ with  module-map given by $\rho$. If $\rho$ as a signed permutation representation is fixed-point free, then so is the module-map.\end{lemma}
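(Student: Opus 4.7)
The plan is to realize $\cV$ concretely as the $n$-fold tensor product $\cV_{L_0}^{\otimes n}$ for a single lattice $L_0$ realizing $\mathbf{Vec}_A^q$, and to lift $\rho$ through the natural embedding of the hyperoctahedral group into $\mathrm{Aut}(\cV_{L_0}^{\otimes n})$. First I would invoke Theorem 2 of \cite{EGty} to choose an even positive definite lattice $L_0$ with $\mathbf{Mod}\,\,\cV_{L_0}\cong\mathbf{Vec}_A^q$. Setting $L:=L_0^{\oplus n}$ (orthogonal direct sum), we have $\cV_L\cong\cV_{L_0}^{\otimes n}$ and therefore $\mathbf{Mod}\,\,\cV_L\cong(\mathbf{Vec}_A^q)^{\boxtimes n}$ by the tensor product rule for strongly rational VOAs recalled in Section 2.2.

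Next I would produce the $G$-action. Each signed permutation $\rho(g)$ is manifestly an isometry of $L$, since it permutes the orthogonal summands and each summand is closed under negation. To lift these isometries to $\mathrm{Aut}(\cV_L)$, observe that negation $-1$ on $L_0$ admits a standard lift $\widehat{(-1)}\in\mathrm{Aut}(\cV_{L_0})$ of order exactly $2$: Lemma \ref{voaaut} applies because $u\cdot u\in 2\bbZ$ for every $u\in L_0$. Permutations of the tensor factors act on $\cV_{L_0}^{\otimes n}$ in the obvious way, and the two kinds of automorphisms together generate a copy of the hyperoctahedral group $\{\pm 1\}^n\rtimes S_n$ inside $\mathrm{Aut}(\cV_L)$. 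Composing the homomorphism $\rho\,{:}\,G\to\{\pm 1\}^n\rtimes S_n$ with this embedding yields the desired honest group homomorphism $G\to\mathrm{Aut}(\cV_L)$. The induced action on $L^*/L\cong A^n$ is the same signed permutation formula (using that $q(-a)=q(a)$), so the module-map is identified with $\rho$ itself.

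Finally I would check fixed-point freeness. Suppose $\rho(g)$ has no eigenvalue $1$ on $\R\otimes L$ for any $g\ne e$. A quick eigenvalue analysis of a signed permutation matrix shows that every cycle of $\pi_g$ must then carry total sign $-1$, forcing $\rho(g)$ to have even order; in particular $|G|$ is even, so by coprimality $|A|$ is odd. An alleged fixed point $(a_1,\ldots,a_n)\in A^n$ of $\rho(g)$ would require $2a_i=0$ along each cycle, but odd $|A|$ makes $a_i=0$ the only such element, so the module-map acts fixed-point freely on $A^n\setminus\{0\}$ as well.

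The step I expect to be the main obstacle is ensuring the standard lifts assemble into an \emph{honest} group homomorphism $G\to\mathrm{Aut}(\cV_L)$ rather than only a projective lift, since in general the standard lift is only well-defined up to conjugation. Routing the lift through the canonical embedding of the full hyperoctahedral group, together with the evenness of $L_0$ that makes $\widehat{(-1)}$ a genuine involution, bypasses this difficulty; the coprimality hypothesis $\gcd(|A|,|G|)=1$ is what kills any residual cohomological ambiguity that might otherwise arise.
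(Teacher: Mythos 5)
Your proof is correct, and it reaches the same destination as the paper by a somewhat different route at the one step that actually requires an argument, namely producing an \emph{honest} homomorphism $G\to\mathrm{Aut}(\cV_L)$ lifting the signed permutation isometries. The paper does this by a direct cocycle computation: it fixes the explicit $\varepsilon$ of \eqref{eps} (which satisfies $\varepsilon(-u,-v)=\varepsilon(u,v)$), takes the product cocycle on $L_0^{\oplus n}$, and checks that the ratio in \eqref{eta} is identically $1$, so that $\eta\equiv 1$ is a valid choice of lift for every $\rho(g)$ simultaneously; multiplicativity of $g\mapsto\widehat{\rho(g)}$ is then immediate because each lift acts simply by $e^v\mapsto e^{\sigma_g(v)}$. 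You instead route the lift through an embedding of the hyperoctahedral group $\{\pm 1\}^n\rtimes S_n$ into $\mathrm{Aut}(\cV_{L_0}^{\otimes n})$, built from the permutation automorphisms of the tensor factors together with an order-$2$ lift of $-1$ on a single factor (which exists by Lemma \ref{voaaut} since $L_0$ is even); the semidirect-product relations are satisfied because conjugating the involution on the $i$-th factor by a permutation gives the involution on the $\tau(i)$-th factor. Both arguments are sound; yours is more structural and avoids any cocycle manipulation, at the cost of leaning on the cited order computation for standard lifts, while the paper's is self-contained and exhibits the lift completely explicitly. Two further remarks: (i) you supply a careful verification that fixed-point-freeness descends to the module-map on $A^n$ (every cycle carries total sign $-1$, hence $|G|$ is even, hence $|A|$ is odd, hence $2a_i=0$ forces $a_i=0$), which the paper leaves implicit, and this is a genuine improvement in completeness; (ii) your closing claim that the coprimality of $|A|$ and $|G|$ is what ``kills residual cohomological ambiguity'' in the lifting is not quite right --- coprimality plays no role in either lifting argument and is only used in your deduction that $|A|$ is odd --- but this is a cosmetic mischaracterization, not a gap.
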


\begin{proof} Let $L$ be any even positive definite lattice realizing  $(A,q)$. Choose the 2-cocycle $\varepsilon_L$ defining $\cV_L$ using \eqref{eps}. Note that $\varepsilon_L(-u,-v)=\varepsilon_L(u,v)$ for this choice. Define $L^{\oplus n}:=L\oplus\cdots\oplus L$ ($n$ times) for any $n$. We can take the 2-cocycle for $\cV_{L^{\oplus n}}$ to be $\varepsilon_{L^{\oplus n}}((u_1,...,u_n),(v_1,...,v_n))=\prod_i\varepsilon_L(u_i,v_i)$. Then \textbf{Mod}$\,\,\cV_{L^{\oplus n}}\cong(\mathbf{Vec}_A^q)^{\boxtimes n}$ as MTC. 

Let $g\in G$. By assumption $\rho(g)$ is a signed permutation matrix. Use it to define an isometry $\sigma=\sigma_g$ of $\Lambda^{\oplus n}$ through \eqref{spr}.  To study the possible lifts $\hat{\sigma}$ to Aut$(\cV_{L^{\oplus n}})$, we need to solve \eqref{eta} for $\eta$. But  \begin{eqnarray}\frac{\varepsilon_{L^{\oplus n}}(\sigma(u_1,...,u_n),\sigma(v_1,...,v_n))}{\varepsilon_{L^{\oplus n}}((u_1,...,u_n),(v_1,...,v_n))}\!\!&=\prod_{i=1}^n\frac{\varepsilon_L(s_1u_{\pi^{-1}1},s_1v_{\pi^{-1}1})\varepsilon_L(s_2u_{\pi^{-1}2},s_2v_{\pi^{-1}2})\cdots\varepsilon_L(s_nu_{\pi^{-1}n},s_nv_{\pi^{-1}n)}}{\prod_{j=1}^n\varepsilon_L(u_j,v_j)}\nonumber\\
&=\prod_{i=1}^n\frac{\varepsilon_L(s_{\pi i}u_i,s_{\pi i}v_i)}{\varepsilon_L(u_i,v_i)}=1\nonumber\end{eqnarray}
This means we can choose $\eta$ in \eqref{eta} to be identically 1, for all $g\in G$. Then manifestly $\widehat{\rho(g)}$ has the same order as $\rho(g)$, in fact $\widehat{\rho}$ defines a homomorphism of $G$ into Aut$(\cV_{L^{\oplus n}})$, so $\widehat{\rho}$ defines an action of $G$ on $\cV=\cV_{L^{\oplus n}}$ by VOA automorphisms. The module-map of $\widehat{\rho}$ will also be by \eqref{spr}:
 \begin{equation}\label{spr2}\widehat{\rho}(g)(a_1,a_2,\ldots,a_n)=(s_{g,1}a_{\pi_g^{-1}1},s_{g,2}a_{\pi_g^{-1}2},\ldots,s_{g,n}a_{\pi_g^{-1}n})\ \  \forall (a_1,a_2,...,a_n)\in(\mathbf{Vec}_A^q)^{\boxtimes n}\end{equation} 
 for the same $\pi_g$ and $s_{g,i}$ as for $\rho(g)$.   \end{proof}

The only finite groups $G$ with a fixed-point free signed permutation representation are 2-groups; 
by Lemma \ref{ty2}, such a $G$ must then be either cyclic or generalized quaternion. 
Consider first the cyclic group $G=\bbZ/2^k$. It has a fixed-point free $2^{k-1}$-dimensional signed permutation representation generated by the $2^{k-1}$-cycle $\pi_1=(12\cdots 2^{k-1})$ and signs $s_{1,i}=1$ for $i>1$ and $s_{1,1}=-1$.

Next consider the generalized quaternions  $Q_{2^k}$ of order $2^k$ ($k>2$), with presentation given in Lemma \ref{ty2}(b). 
To the generator $a$ resp.\ $b$ associate the signed permutation matrices with
\begin{eqnarray}\pi_a=(12\cdots 2^{k-2})(2^{k-2}+1,2^{k-2}+2,\ldots, 2^{k-1})\,,\ \ s_{a,1}=s_{a,2^{k-2}+1}=-1\,,\\
\pi_b=(1,2^{k-1}+1)(2,2^{k-2}+2)\cdots(2^{k-2},2^{k-1})\,,\ \ s_{b,j}=-1\ \forall j\le 2^{k-2}\,,\end{eqnarray}
using cycle notation, and all other $s_{a,i},s_{b,j}=1$. Again, this is fixed-point free.

\begin{corollary} \label{TYeven} Choose any metric group $(A,q)$ ($|A|$ odd) and let $G=\Z/2^k$ or $Q_{2^k}$ for some $n$. Then there is a braided $G$-crossed extension $\mathbf{C}$ of $(\mathbf{Vec}_A^q)^{\boxtimes 2^{k-1}}$ of $G$-Tambara-Yamagami type, and a lattice VOA $\cV$ on which $G$ acts as automorphisms, for which $\mathbf{TwMod}_G\,\cV\cong\mathbf{C}$.\end{corollary}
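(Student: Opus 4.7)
The plan is to apply Lemma \ref{L:permrep} to the explicit signed permutation representations of $G=\Z/2^k$ and $G=Q_{2^k}$ constructed in the display just above the statement, and then invoke Theorem \ref{GTy}. Since $|A|$ is odd and $|G|=2^k$, the coprimality hypothesis of Lemma \ref{L:permrep} is automatic. Choosing any even positive definite lattice $L$ with $(L^\ast/L,q_L)\cong(A,q)$ and setting $\cV=\cV_{L^{\oplus 2^{k-1}}}$, the lemma produces a lattice VOA with $\mathbf{Mod}\,\cV\cong(\mathbf{Vec}_A^q)^{\boxtimes 2^{k-1}}$ together with a $G$-action by VOA automorphisms whose module-map on $A^{2^{k-1}}$ is exactly the prescribed signed permutation representation.

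The only genuinely calculational step is verifying that each of these signed permutation representations is fixed-point free, so that the hypotheses of Theorem \ref{GTy} are met. For $G=\Z/2^k$ the generator acts as a cyclic shift $e_i\mapsto e_{i+1}$ with a single sign change at the wrap-around, so its $2^{k-1}$-th power is $-I$ and its eigenvalues are the primitive $2^k$-th roots of unity; no nontrivial power has $1$ as an eigenvalue. For $G=Q_{2^k}$ one checks from the cycle/sign data that $a$ acts as the sum of two $\Z/2^{k-1}$-cyclic shift-with-sign blocks (so $a^{2^{k-2}}=-I$), that $b^2=-I$, and that $bab^{-1}=a^{-1}$; this confirms the assignment extends to a representation of $Q_{2^k}$, and any nontrivial element is either a power of $a$ (handled as in the cyclic case, since each block restricts to a fixed-point-free $\Z/2^{k-1}$-representation) or of the form $a^ib$ with square $-I$, hence again has no eigenvalue $1$.

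Combining Lemma \ref{L:permrep} with this fixed-point freeness, Theorem \ref{GTy} immediately yields that $\mathbf{TwMod}_G\,\cV$ is a braided $G$-crossed extension of $(\mathbf{Vec}_A^q)^{\boxtimes 2^{k-1}}$ of $G$-Tambara-Yamagami type; this is the category $\mathbf{C}$ sought in the statement. The underlying appeal to Theorem \ref{voathm} is unconditional here, since $G$ is a $2$-group and therefore solvable, so strong rationality of $\cV^G$ follows from \cite{McR,CM} without invoking Conjecture \ref{conj}. The main obstacle in carrying out this plan cleanly is the quaternion verification, which requires tracking both the permutation and the sign assignments through the defining relations $a^{2^{k-2}}=b^2$, $b^4=1$, $ab=ba^{-1}$ of $Q_{2^k}$, but no further tensor-categorical input is needed beyond what is already packaged into Lemma \ref{L:permrep} and Theorem \ref{GTy}.
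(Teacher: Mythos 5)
Your proposal is correct and follows essentially the same route as the paper: apply Lemma \ref{L:permrep} to the explicit fixed-point-free signed permutation representations of $\Z/2^k$ and $Q_{2^k}$ given just before the statement, and then conclude via Theorem \ref{GTy} that $\mathbf{TwMod}_G\,\cV$ is the desired $G$-Tambara-Yamagami braided $G$-crossed extension. The only difference is that you spell out the fixed-point-freeness and the $Q_{2^k}$ relations, which the paper merely asserts.
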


The existence of \textbf{C} had already been clear from the vanishing of the obstructions $[o_3],[o_4]$. Corollary \ref{TYeven} gives the explicit module-map (given by the appropriate signed permutation matrix). The deeper part of the Corollary is realizing \textbf{C} by a VOA, which was done in Lemma \ref{L:permrep}.

To handle odd $|G|$, we have to generalize beyond signed permutation representations. Any group $G$ of isometries of an $n$-dimensional lattice $L$ defines an $n$-dimensional integral representation of $G$. The irreducible \textit{integral} representations of $G=\Z/n$  (as opposed to the irreducible representations over say $\bbC$) are in natural bijection with the divisors $d|n$, where the generator $1\in G$ is sent to the companion matrix of the cyclotomic polynomial $\Phi_d(x)$, and thus has dimension $\phi(d)$ in terms of Euler's $\phi$-function (Theorem 73.9 of \cite{CR}). The only of these which is faithful has $d=n$. There are other indecomposable integral representations: e.g.\ for $G=\Z/p$ there are precisely $2h_p+1$ indecomposables (and only 2 irreducibles) where $h_p$ is the class number of the cyclotomic field $\Q[\xi_p]$ (see Theorem 74.3 of \cite{CR}). But the smallest integral representation of $G=\Z/n$ with no fixed points is that irreducible of dimension $\phi(n)$. We will construct our $\Z/n$-actions on VOAs using that irrep. 

Choose any odd prime $p$. The root lattice $A_{p-1}$ can be identified with the subset of all $(u_1,u_2,...,u_p)\in\Z^p$ with $\sum_iu_i=0$. Note that $A_{p-1}$ has an order $p$ isometry $\sigma$ defined by $\sigma(u_1,...,u_p)=(u_p,u_1,...,u_{p-1})$. Each power $\sigma^\ell$, $0<\ell<p$, is fixed-point free (apart from $\vec{0}=(0,..,0)$ of course). For any $k\ge 1$ let $\Lambda_{p^k}$ be the lattice $A_{p-1}^{\oplus p^{k-1}}$ (i.e.\ orthogonal direct sum of $p^{k-1}$ copies of $A_{p-1}$) and let $\sigma_{p^k}$ be the map on $\Lambda_{p^k}$ given by
\begin{equation}\label{E:fpf}(\vec{v}_1,\vec{v}_2,\ldots,\vec{v}_{p^{k-1}})\mapsto(\sigma(\vec{v}_{p^{k-1}}),\vec{v}_1,\ldots,  \vec{v}_{p^{k-1}-1})\end{equation}
Then $\sigma_{p^k}$ has order exactly $p^k$ and is an isometry of $\Lambda_{p^k}$. Moreover, no power $\sigma_{p^k}^\ell$, $0<\ell<p^k$, has fixed points: to see this,  it suffices to compute $\sigma_{p^k}^{p^{k-1}}(\vec{v}_1,...,\vec{v}_{p^{k-1}})=( \sigma(\vec{v}_1),...,\sigma(\vec{v}_{p^{k-1}}))$.

Recall that $A_{m-1}^*/A_{m-1}\cong\Z/m$ for any $m>1$, generated by $\Lambda^{(m)}_1=(\frac{m-1}{m},\frac{-1}{m},...,\frac{-1}{m})$ using the $A_{m-1}\subset \Z^m$ realization, with norm $\Lambda_1^{(m)}\cdot\Lambda^{(m)}_1=\frac{m-1}{m}$. Note that $\sigma(\Lambda^{(m)}_1)\in \Lambda^{(m)}_1+A_{m-1}$. The lattice isometry group $O(A_{m-1})\cong \mathrm{Sym}(m)\times\{\pm 1\}$.

\begin{lemma} \label{L:fpf} Choose any $G=\Z/p^k$ ($p$ odd) and any $n\in\Z_{>0}$ coprime to $p$.  Define a metric group $(A,q)$ on $A=\frac{1}{\sqrt{n}}\Lambda_{p^k}/{\sqrt{n}}\Lambda_{p^k}\cong (\Z/n)^{\oplus \phi(p^k)}$ using the quadratic form of $\Lambda_{p^k}$. Then there is a braided $G$-crossed category $\mathbf{C}$ of type $\mathcal{TY}_G(A,q)$, and a lattice VOA $\cV$ on which $G$ acts, such that $\mathbf{TwMod}_G\,\cV\cong\mathbf{C}$. The action \eqref{E:fpf} of $G$ on $\Lambda_{p^k}$ descends to the same action of $G$ on $A$.\end{lemma}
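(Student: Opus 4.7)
The approach is to construct an even positive-definite lattice $L$ carrying an order-$p^k$ isometry $\tau$ that extends $\sigma_{p^k}$, has $L^*/L\cong(A,q)$ as metric groups, and induces the correct $G$-action on $A$; I then lift $\tau$ to $\widehat\tau\in\mathrm{Aut}(\cV_L)$ via Lemma~\ref{voaaut}, and invoke Theorem~\ref{GTy}. The final sentence of the lemma (that \eqref{E:fpf} descends to the stated action on $A$) is built in by construction.

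The hard part is producing $L$. The naive choice $\sqrt n\Lambda_{p^k}$ admits $\sigma_{p^k}$ automatically, but its discriminant group is $A\oplus A_p$ with an unwanted $p$-torsion summand $A_p\cong\Lambda_{p^k}^*/\Lambda_{p^k}\cong(\Z/p)^{p^{k-1}}$ which must be eliminated. My plan is to embed $\Lambda_{p^k}$ primitively into an even positive-definite unimodular lattice $M$ (available by standard Nikulin-style embedding theorems after enlarging ambient rank), let $\Lambda^\perp\subseteq M$ be its orthogonal complement so that $(\Lambda^\perp)^*/\Lambda^\perp\cong(A_p,q_p^{-1})$ carries the inverse form, and form $L_0=\sqrt n\Lambda_{p^k}\oplus\Lambda^\perp$. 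The $p$-part of $L_0^*/L_0$ is then the hyperbolic pair $A_p\oplus A_p^{-1}$, which admits Lagrangian subgroups $\mathcal L$; gluing $L_0$ along such an $\mathcal L$ yields an even positive-definite lattice $L$ with $L^*/L\cong\mathcal L^\perp/\mathcal L\cong A$ as metric groups. One extends $\sigma_{p^k}$ to $L$ by choosing an isometry $\tau'$ on $\Lambda^\perp$ so that $\mathcal L$ is $\tau:=\sigma_{p^k}\oplus\tau'$-invariant: for $k=1$, $\sigma_{p^k}$ already acts trivially on $A_p$ (since its order $p$ is coprime to $|\mathrm{Aut}(\Z/p)|=p-1$), so $\tau'=\Id$ works with any Lagrangian; for $k\ge 2$, either a matching cyclotomic choice of $\Lambda^\perp$ or a multiplicity-$r$ version of $\sqrt n\Lambda_{p^k}$ (exploiting that $q_p^{\oplus r}$ is hyperbolic over $\F_p$ for $r$ sufficiently large, e.g.\ $r=4$ by Lagrange's four-square theorem mod $p$) makes $A_p$ and $(\Lambda^\perp)^*/\Lambda^\perp$ isomorphic as $\langle\sigma_{p^k}\rangle$-modules via a form-inverting map, rendering the diagonal Lagrangian $\tau$-stable.

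Once $L$ is in hand, Lemma~\ref{voaaut} supplies a standard lift $\widehat\tau\in\mathrm{Aut}(\cV_L)$ of order exactly $p^k$ (the odd-order case of the lemma), generating $G\cong\Z/p^k$. The resulting module-map on $\mathbf{Mod}\,\cV_L\cong\mathbf{Vec}_A^q$ is the induced action of $\sigma_{p^k}$ on $A=\Lambda_{p^k}/n\Lambda_{p^k}$, and it is fixed-point free on $A\setminus\{0\}$: the minimal polynomial of $\sigma_{p^k}$ on $\Lambda_{p^k}$ is the cyclotomic polynomial $\Phi_{p^k}$, so $\sigma_{p^k}-\Id$ is invertible over $\Q$ with determinant $\pm p=\pm N_{\Q(\zeta_{p^k})/\Q}(\zeta_{p^k}-1)$; if $v\in\Lambda_{p^k}$ satisfies $(\sigma_{p^k}-\Id)v\in n\Lambda_{p^k}$, then using the isomorphism $\Lambda_{p^k}/(\sigma_{p^k}-\Id)\Lambda_{p^k}\cong\Z/p$ and $\gcd(n,p)=1$ one concludes in two lines that $v\in n\Lambda_{p^k}$. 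Theorem~\ref{GTy} therefore identifies $\mathbf{TwMod}_G\,\cV_L$ as a braided $G$-crossed extension of $\mathbf{Vec}_A^q$ of $G$-Tambara--Yamagami type $\mathcal{TY}_G(A,q)$, producing the desired $\mathbf C$ and completing the proof.
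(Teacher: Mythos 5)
Your high-level strategy coincides with the paper's: build an even positive-definite lattice $L$ whose discriminant form is $(A,q)$, which carries an order-$p^k$ isometry extending $\sigma_{p^k}$ and inducing a fixed-point-free action on $A$; lift via Lemma~\ref{voaaut}; conclude by Theorem~\ref{GTy}. The gap is in the step that eliminates the unwanted $p$-torsion. Gluing $L_0=\sqrt n\Lambda_{p^k}\oplus\Lambda^\perp$ along a Lagrangian requires the $p$-part of the discriminant form of $L_0$, namely $(A_p,n^{*}q_p)\oplus(A_p,-q_p)$ (the unit $n^{*}\equiv n^{\pm1}\ (\mathrm{mod}\ p)$ comes from rescaling by $\sqrt n$, and the form $-q_p$ on $\Lambda^\perp$ is forced by Nikulin once $\Lambda^\perp$ is the complement in a unimodular lattice), to contain an isotropic subgroup of order $p^{p^{k-1}}$. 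For $k=1$ this is a binary form $\langle n^{*}c\rangle\oplus\langle -c\rangle$ over $\F_p$, which is isotropic iff $n$ is a quadratic residue mod $p$; when $n$ is a non-residue there is no nonzero isotropic vector, hence no Lagrangian and no even overlattice of $L_0$ with discriminant group $A$. Concretely, for $p=3$, $n=2$ your recipe gives $\sqrt2A_2\oplus E_6$, whose $3$-part of the discriminant form is $\tfrac{2}{3}(x^2+y^2)$, which is anisotropic. So the construction as written does not cover all $n$ coprime to $p$, which is what the lemma claims. Your fallback of taking $r$ copies of $\sqrt n\Lambda_{p^k}$ to force hyperbolicity changes $A$ to $A^{\oplus r}$ and therefore proves a different statement.

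The paper sidesteps exactly this quadratic-residue obstruction by not taking the complement in a unimodular lattice: it glues $\sqrt{n}A_{p-1}$ to an auxiliary piece $\sqrt{2pp'}\Z\oplus A_{p'-1}\oplus\sqrt2\Z$, where the auxiliary prime $p'$ is produced by Dirichlet's theorem subject to Legendre-symbol conditions ($(\tfrac{p'}{p})=(\tfrac{2n}{p})=(\tfrac{2}{p'})$ and $p'\equiv-p\ (\mathrm{mod}\ 4)$) chosen precisely so that the required glue vectors exist for every $n$ coprime to $p$; the $k>1$ case is then handled by taking $p^{k-1}$ copies permuted by $\sigma_{p^k}$, which keeps the discriminant group equal to $A$ because the added summands are glued away in each copy. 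The remaining ingredients of your argument are fine and match the paper: the triviality of the $\sigma$-action on $A_p$ for $k=1$, the determinant computation showing $\sigma-\Id$ is invertible on $\Lambda_{p^k}/n\Lambda_{p^k}$ (which should be run for every power $\sigma^\ell$, $0<\ell<p^k$, but $\det(\sigma^\ell-\Id)$ is always a power of $p$, so this extends), and the appeals to Lemma~\ref{voaaut} and Theorem~\ref{GTy}. To repair the proof you must either impose residue conditions relating $n$ and $p$, or replace the unimodular ambient lattice by one whose complementary discriminant form is matched to $n^{*}q_p$ rather than to $q_p$ --- which is in effect what the paper's choice of $p'$ accomplishes.
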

 
 \begin{proof} Fix $p$ and $n$ as above. Consider first the case where $k=1$. Begin by noting that $(\sqrt{n}A_{p-1})^*/(\sqrt{n}A_{p-1})\cong (\Z/n)^{p-1}\oplus\Z/p$ as groups, since $n$ and $p$ are coprime.  The $\Z/p$ part is spanned by $\sqrt{n}\Lambda_1^{(p)}$ and corresponds to metric group $(\Z/p,\left(\frac{-2n}{p}\right))$ using the discussion at the end of Section 5.2.  The $ (\Z/n)^{p-1}$ part comes from $A:=(\frac{1}{\sqrt{n}}A_{p-1})/(\sqrt{n}A_{p-1})$ and is the orthogonal complement to $\Z/p$ (so is necessarily non-degenerate). Write $q$ for its quadratic form. After this proof we  discuss what $q$ is, but this isn't important for now.
 
 Choose any prime $p'$ satisfying $\left(\frac{p'}{p}\right)=\left(\frac{2n}{p}\right)=\left(\frac{2}{p'}\right)$ and $p'\equiv -p$ (mod 4), using Legendre symbols. These conditions force $p'$ to lie in one of $(p-1)/2$ different congruence classes mod $8p$, so by Dirichlet's Theorem there are infinitely many $p'$ to choose from. These conditions on $p'$ mean
that there are integers $a,b$ such that \begin{equation}n\equiv 2p'a^2\ (\mathrm{mod}\ p)\,,\ \ \  1\equiv 2pb^2\ (\mathrm{mod}\ p')\label{eq:ab}\end{equation} where we use quadratic reciprocity and multiplicativity of the Legendre symbols.
 
 Let $\Lambda^{(p)}$ denote the $\Z$-span of the lattice $A_{p-1}\oplus \sqrt{2pp'}\Z\oplus A_{p'-1}\oplus\sqrt{2}\Z$ with the dual lattice vectors $(\sqrt{n}\Lambda_1^{(p)},\frac{2p'a}{\sqrt{2pp'}},0,0),(0,\frac{2pb}{\sqrt{2pp'}},\Lambda^{(p')}_1,0),(0,\frac{pp'}{\sqrt{2pp'}},0,\frac{1}{\sqrt{2}})$ (this gluing theory of lattices is discussed in Section 4.3 of \cite{CS}, and corresponds in VOA language to simple current extensions of lattice VOAs). Then, using \eqref{eq:ab},  $\Lambda$ is an even positive definite lattice, with metric group naturally isomorphic to $ (A,q)$. $\sigma$ is an isometry of $\Lambda$, where we have it fix the 
 $ \sqrt{2pp'}\Z\oplus A_{p'-1}\oplus\sqrt{2}\Z$ part. This action of $\Z/p$ descends to the corresponding fixed-point free action on $A$.
 
 Let $\cV=\cV_{\Lambda^{(p)}}$. By Lemma \ref{voaaut}, the isometry $\sigma$ lifts to an order $p$ automorphism of $\cV$. The rest follows from Theorem \ref{GTy}.
 
 The argument for $p^k$ ($k>1$) is similar. $\Lambda^{(p^k)}$  will be $p^{k-1}$ copies of $\Lambda^{(p)}$. The isometry $\sigma_{p^k}$ will permute these copies, as well as twist the first $A_{p-1}$ by $\sigma$ as in \eqref{E:fpf}. It lifts to an order $p^k$ automorphism of the VOA $\cV=\cV_{\Lambda^{(p^k)}}$, whose module-map looks like \eqref{E:fpf} and is fixed-point free. Theorem \ref{GTy} concludes the proof.\end{proof}

If $n$ is coprime to $p!$, then the quadratic form $q$ on $A\cong(\Z/n)^{\phi(p^k)}$ can be obtained by noting $\alpha_1,\alpha_1+2\alpha_2,...,\alpha_1+2\alpha_2+\cdots+(p-1)\alpha_{p-1}$ are an orthogonal set of generators for $A$, where the $\alpha_i$ are the simple roots of $A_{p-1}$, and these have norm $2,6,...,(p-1)p$. For example, if $p=3$ and $n=p^{\prime k}$ where $p'>3$ is  prime then \textbf{Mod}$\,\,\cV\cong \mathbf{Vec}_{(\Z/p^{\prime k})^2}^{+,\pm}$ using notation from the end of Section 5.2, where we take $+,+$ if $p'\equiv\pm 1$ (mod 12) and $+,-$ otherwise. If $n$ is not coprime to $p!$, then those won't be generators and we must be more careful. For example, when $n=2$ and $p$ is arbitrary, \textbf{Mod}$\,\,\cV$ is the Deligne product of $\phi(p^k)/2$ copies of $\mathbf{Vec}_{(\Z/2)^2}^+$: the $\ell$th copy of $((\Z/2)^2,q_+)$ in $\frac{1}{\sqrt{2}}A_{p-1}/\sqrt{2}A_{p-1}$ has generators $\sum_{i=0}^{\ell-1}\alpha_i$ and $\alpha_{2\ell}+\sum_{i=0}^{\ell-1}\alpha_i$.

Because Lemma \ref{L:fpf} is based on the smallest faithful integral representations, we suspect the number $n=\phi(p^k)$ there cannot be improved.
Nevertheless, our list of VOA realizations of possible braided $G$-crossed $G$-Tambara-Tamagami categories for that $n$ is not complete. For example, for $n=2$ and $G=\Z/3$ the theorem obtains type $\mathcal{TY}_{\Z/3}((\Z/2)^2,+)$ (recall the metric groups for 2-groups discussed at the end of Section 5.2). But through triality $\Z/3$ also acts without fixed points on $D^*_4/D_4$, yielding type $\mathcal{TY}_{\Z/3}((\Z/2)^2,-)$.


It is now elementary to find VOA realizations for any nilpotent $G$ allowed by Lemma \ref{ty2}(b), by taking the tensor product of the corresponding lattices and isometries. For example, for $G=\Z/15$ take $\cV=\cV_{\Lambda^{(3)}\otimes\Lambda^{(5)}}$. Identify $\Lambda^{(3)}\otimes\Lambda^{(5)}$ with the vectors $(v_0,...,v_{14})\in\Z^{15}$ satisfying $\sum_{i=0}^2v_{k+5i}=0=\sum_{j=0}^4v_{m+3j}$ for all $ 0\le k\le4$ and all $0\le m\le 2$. Then the order 15 isometry sends $v_i\mapsto v_{i+1\ (\mathrm{mod}\ 15)}$. For
$G=\Z/6$, take $\cV=\cV_{\Lambda^{(3)}}$ and order 6 isometry taking $(a_1,a_2,a_3)\in \frac{1}{\sqrt{n}}A_2/\sqrt{n}A_2$ to $(-a_3,-a_1,-a_2)$. For $G=\Z/p^k\times Q_8$ the VOA will be $\cV_{(\Lambda^{(p^k)})^{\oplus 4}}$, etc.

\subsection{Simples and modular data of the $G$-equivariantization \textbf{Mod}$\,\,\cV^G$}

Once we have the braided $G$-crossed category \textbf{TwMod}$_G\,\cV$, it is easy to get information on the corresponding orbifold category \textbf{Mod}$\,\,\cV^G$, which in tensor category language is the $G$-equivariantization of \textbf{TwMod}$_G\,\cV$. When there is a lattice VOA relization, and $G$ is cyclic, this been studied in the interesting paper \cite{BEKT}. In their notation, $G$-Tambara-Yamagami means $\mathfrak{h}_0=\bbC$ so there is no $h$-dependence on their characters, and they will usually be linearly dependent, limiting the validity of the proposed modular data. 

Let $\mathbf{C}$ be a braided $G$-crossed extension of $\mathbf{Vec}_A^q$ of type $\mathcal{TY}_G(A)$. In the case of a pointed VOA realization, Mod$\,\,\cV\cong\mathbf{Vec}_A^q$, $\mathbf{TwMod}_G\,\cV\cong\mathbf{C}$, and $\mathbf{Mod}\,\,\cV^G$ will be the equivariantization $\mathbf{C}^G$ of $\mathbf{C}$. Let $\rho:G\to \mathrm{O}(A,q)\le\mathrm{Aut}(A)$ be the module-map. Moving around the $H^3$-torsor, i.e.\ choosing a different $\omega\in H^3(G,\bbC^\times)$, multiplies  the associativity isomorphisms $(M^g\otimes M^{h})\otimes M^{k}\cong M^g\otimes(M^{h}\otimes M^{k})$ by $\omega$, and also changes the group action, e.g.\ 
by multiplying the isomorphism $(hk).M^g\to h.(k.M^g)$ by $\theta_g(h,k)$.

Much of the story is immediate from the definition of equivariantization. The simples of $\mathbf{C}^G$ correspond to the $G$-orbits, together with projective representations of the stabilizers. In particular, up to equivalence they are $[g,\chi]$ for each conjugacy class representative $g$ in $G$, where $\chi$ runs through the irreducible projective characters of the centralizer $C_G(g)$ with some 2-cocycle $\gamma\in Z^2(G,\mathbb{C}^{\times})$, together with one simple $[a]$ for each representative $a$ of a $G$-orbit $\rho(G).a$ in $A$, for $a\ne 0$. There will be precisely $(|A|-1)/|G|$ simples of type $[a]$ for $a\ne0$; if $G\cong\bbZ_n$ or $\bbZ_n\times\bbZ_{n'}$, the number of the remaining simples $[g,\chi]$ will equal the rank of $\cZ(\mathbf{Vec}_G)$.

Likewise, we can read off the restrictions $\mathbf{C}\to\mathbf{C}^G$. They are Res$\,a=[a]$ for $a\ne 0$, Res$\,0=\oplus_{\chi\in\mathrm{Irr}(G)}\mathrm{dim}\,\chi\,[e,\chi]$, and Res$\,M^g=\oplus_\chi\mathrm{dim}\,\chi\,[g,\chi]$ for $g\ne e$, where $\chi$ runs over the irreducible projective characters of $C_G(g)$ for the appropriate 2-cocycle. Therefore inductions are Ind$\,[a]=\oplus_{a'\in\rho(G).a}a'$, Ind$\,[e,\chi]=\mathrm{dim}\,\chi\,\, 0$ and Ind$\,[g,\chi]=\oplus_{h\in K_g}\mathrm{dim}\,\chi\, M^h$ for $g\ne e$, where $K_g$ is the conjugacy class of $g$ in $G$. Some of these formulas are familiar to those from $\cZ(\mathbf{Vec}_G^\omega)$ which we saw in Section 3, because formally these equivariantizations are similar. However induction, being a tensor functor, respects FPdim, and so we obtain FPdim$\,[g,\chi]=\sqrt{|A|}\,\|K_g\|\,\mathrm{dim}\,\chi$ for $g\ne e$, FPdim$\,[e,\chi]=\mathrm{dim}\,\chi$ and FPdim$\,[a]=|A|$. The extra factor of $\sqrt{|A|}$ is because FPdim$\,M^g=\sqrt{|A|}$.  Note that these FPdim's are all integers, at least when $|A|>2$.

Note that Res$\,0$ is an \'etale algebra, as always -- a copy $B_G$ of the regular representation of $G$.

The ribbon twists are $\theta([e,\chi])=1$, $\theta([g,\chi])=\chi(g)/\chi(e)$ for $g\ne e$, and $\theta([a])=q(a)$, the quadratic form on $A$. This is consistent with restriction preserving the ribbon twists of local objects (i.e.\ $a\in \mathbf{Vec}_A^q$).

We will stop here, but to compute the $S$ matrix, follow the method of \cite{GNN} which describes the general method and works it out for $G=\Z/2$.

For a simple example, consider the $\Z/3$-orbifold (by `triality') of $\cV_{D_4}$. We see that the 10 simples have FPdim's 1 (3 times), 2 (6 times) and 3 (once). \cite{BEKT} in Section 7.2 obtain the ribbon twists are 1,1,1, $\xi_9,\xi_9,\xi_9^4,\xi_9^4,\xi^7_9,\xi_9^7$, and $-1$, respectively, using lattice VOA methods. Then from our analysis we find that the ribbon twists for $g\ne e$ correspond to a nontrivial gauge anomaly $\omega\in Z^3(G,\bbC^\times)$.  \cite{BEKT} are unable to obtain the $S$ matrix from lattice VOA methods, but the $S$ matrix entries $S_{[g,\chi],[g',\chi']}$ for $g,g'\ne e$ (these are the difficult ones) for $\cV_{D_4}^{\Z/3}$ match those of $\cZ(\mathbf{Vec}_G^\omega)$ for that $\omega$, up to a factor independent of $g,g',\chi,\chi'$, and so can be written down explicitly.

\section{The diagonal argument}

In this section we explore a simple strategy to combine the trivial module-map (i.e.\ cleft) case and the fixed-point free module-map ($G$-Tambara-Yamagami) case into a hybrid model. Certainly this can be pushed much further than we do here.

The following is clear.

\begin{lemma} \label{L:diag} $\mathbf{(a)}$ Suppose $\mathbf{D}$ is a braided $G$-crossed extension of $\mathbf{C}$, and $\mathbf{D'}$ is a braided $H$-crossed extension of $\mathbf{C'}$.  Then $\mathbf{D}\boxtimes \mathbf{D'}$ is a braided $G\times H$-crossed extension of $\mathbf{C}\boxtimes\mathbf{C'}$. 

\smallskip\noindent$\mathbf{(b)}$ Let $N$ be a normal subgroup of a finite group $G$. Let $\mathbf{D}$ be a braided $G$-crossed extension of $\mathbf{C}$. Define the full subcategory $\mathbf{D}|_N$ of $\mathbf{D}$ whose objects are sums of homogeneous simple objects with grading in $N$. Then $\mathbf{D}|_N$ is a braided $N$-crossed extension of $\mathbf{C}$.
\end{lemma}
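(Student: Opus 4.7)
Both parts are essentially bookkeeping: I would simply verify that the natural candidate structures, assembled from the ambient categories, satisfy the axioms of a braided crossed extension. Neither part carries deep content, though part (b) requires slightly more care than part (a).

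For part (a), the candidate $(G\times H)$-grading on $\mathbf{D}\boxtimes\mathbf{D'}$ is
\[
(\mathbf{D}\boxtimes\mathbf{D'})_{(g,h)} \;:=\; \mathbf{D}_g\boxtimes\mathbf{D'}_h,
\]
whose trivially-graded component is $\mathbf{C}\boxtimes\mathbf{C'}$ as required. The $(G\times H)$-action is the Deligne product of the two given actions, $(g,h)\cdot(x\boxtimes x'):=({}^g x)\boxtimes({}^h x')$, which manifestly sends the $(g_0,h_0)$-graded piece to the $(gg_0g^{-1},\,hh_0h^{-1})$-graded piece because each tensor factor does. The $(G\times H)$-crossed braiding is again a Deligne product: for $x\in\mathbf{D}_g$ and $x'\in\mathbf{D'}_h$, set
\[
c_{x\boxtimes x',\,y\boxtimes y'} \;:=\; c_{x,y}\boxtimes c_{x',y'}\;:\; (x\boxtimes x')\otimes(y\boxtimes y') \to ({}^g y\boxtimes{}^h y')\otimes(x\boxtimes x').
\]
All hexagon axioms and the compatibilities among grading, action, and crossed braiding hold because they already hold separately in each Deligne factor.

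For part (b), I would take the $N$-grading on $\mathbf{D}|_N$ to be the restriction of the $G$-grading; closure under tensor is immediate because $N$ is a subgroup ($\mathbf{D}_m\otimes\mathbf{D}_{m'}\subseteq\mathbf{D}_{mm'}\subseteq\mathbf{D}|_N$ for $m,m'\in N$), and the identity component remains $\mathbf{D}_e=\mathbf{C}$. The key observation is that the $G$-action restricts to an action of $N$ on $\mathbf{D}|_N$: for $n\in N$ and $x\in\mathbf{D}_m$ with $m\in N$, we have ${}^n x\in\mathbf{D}_{nmn^{-1}}$, and since $N$ is a subgroup, $nmn^{-1}\in N$, so ${}^n x$ lies in $\mathbf{D}|_N$. (Normality of $N$ in $G$ is the natural hypothesis: it ensures that the ambient $G$-action on $\mathbf{D}$ preserves the subcategory $\mathbf{D}|_N$, which matters when relating this construction to equivariantizations, even though only the subgroup property is strictly needed for the statement as written.) The $N$-crossed braiding is defined by restriction: for $x\in\mathbf{D}_n$ with $n\in N$ and $y\in\mathbf{D}|_N$, the ambient crossed braiding $c_{x,y}:x\otimes y\to{}^n y\otimes x$ already lands in $\mathbf{D}|_N\otimes\mathbf{D}|_N$ by the same reasoning. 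All coherence axioms restrict directly from those of $\mathbf{D}$.

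There is no genuine obstacle in either part. The only places requiring any thought are verifying in part (b) that the $N$-action preserves $\mathbf{D}|_N$ and that the crossed braiding restricts sensibly; both follow immediately from $N$ being a subgroup, with normality in $G$ giving the cleaner statement that the full $G$-action (not just the $N$-action) permutes the graded components of $\mathbf{D}|_N$ among themselves.
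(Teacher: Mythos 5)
Your verification is correct, and it is exactly the routine check the paper has in mind: the paper offers no proof at all, simply declaring the lemma ``clear.'' Your observation that only the subgroup property (not normality) of $N$ is strictly needed in part (b) is also accurate and consistent with the paper's subsequent use of the diagonal subgroup $\Delta_G\le G\times G$, which is normal only for abelian $G$.
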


For example, write $\Delta_G$ for the diagonal subgroup $\{(g,g)\in G\times G\,|\, g\in G\}$. Then Lemma \ref{L:diag} says that $(\mathbf{D}\boxtimes\mathbf{D'})|_{\Delta_G}$ is a braided $G$-crossed extension of $\mathbf{C}\boxtimes \mathbf{C'}$. Perhaps a better name for this diagonal construction is a \textit{graded Deligne product} $\mathbf{D}\boxtimes_{(G)}\mathbf{D'}$. Note that it decomposes as $\mathbf{D}\boxtimes_{(G)}\mathbf{D'}=\oplus_{g\in G}\mathbf{D}_g\boxtimes\mathbf{D'}_g$.

 On VOAs the diagonal construction works as follows. Suppose $G$ acts as automorphisms on two VOAs $\cV$ and $\cW$. On $\mathbf{Mod}\,\,\cV$ suppose the module-map is trivial, so $\mathbf{TwMod}_G\,\cV$ is cleft, and on $\mathbf{Mod}\,\,\cW$ suppose it is fixed-point free, so $\mathbf{TwMod}_G\,\cW$ is $G$-Tambara-Yamagami. Then $\mathbf{TwMod}_{\Delta_G}\,(\cV\otimes\cW)=\mathbf{TwMod}_G\,\cV\boxtimes_{(G)}\mathbf{TwMod}_G\,\cW$ is hybrid.

More interesting is the converse:

\begin{theorem} \label{diagonal} Let $(A,q)$ be a metric group and $\mathbf{D}$ a braided $\Z/p$-crossed extension of $\mathbf{Vec}_A^q$ for some prime $p$ coprime to $|A|$. Let $B$ consist of all $a\in A$ fixed by all $g\in\Z/p$. Suppose that $q|_B$ is non-degenerate. Then $\mathbf{D}$  is equivalent, as a  braided $\Z/p$-crossed extension, to the graded Deligne product
\begin{equation}
\mathbf{D}_c\boxtimes_{(\Z/p)} \mathbf{D}_{ty}= (\mathbf{D}_c\boxtimes \mathbf{D}_{ty})|_{\Delta_{\Z/p}}
\end{equation}
where $\mathbf{D}_c$ is a pointed (i.e.\ cleft) braided  $\Z/p$-crossed extension of $\mathbf{Vec}_B^{q|_B}$ and  $\mathbf{D}_{ty}$ is a $\Z/p$-Tambara-Yamagami type  braided $\Z/p$-crossed extension of the MTC $\mathbf{Vec}_{B^\perp}^{q|_{B^\perp}}$. 
\end{theorem}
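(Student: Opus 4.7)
The plan is to exploit non-degeneracy of $q|_B$ to split off a pointwise $\Z/p$-fixed non-degenerate braided subcategory $\mathbf{Vec}_B^{q|_B}$ from $\mathbf{D}_e$, factor it out of $\mathbf{D}$ via a $G$-crossed M\"uger argument, and identify the remainder as a $\Z/p$-Tambara-Yamagami extension.

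First I would establish the metric splitting. Non-degeneracy of $q|_B$ gives $A = B \oplus B^\perp$ as orthogonal metric groups, with $q|_{B^\perp}$ also non-degenerate, so $\mathbf{D}_e = \mathbf{Vec}_A^q \cong \mathbf{Vec}_B^{q|_B} \boxtimes \mathbf{Vec}_{B^\perp}^{q|_{B^\perp}}$ as MTCs. The module-map $\rho : \Z/p \to \mathrm{O}(A, q)$ preserves both factors (orthogonal maps preserve orthogonal complements): trivially on $B$ by definition, and fixed-point freely on $B^\perp$ since any fixed element there would lie in $B \cap B^\perp = 0$.

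Next I would invoke a $\Z/p$-crossed M\"uger splitting: if $\mathbf{E} \subseteq \mathbf{D}_e$ is a non-degenerate braided subcategory that is pointwise $\Z/p$-fixed, then the $\Z/p$-crossed centralizer $\mathbf{E}' \subseteq \mathbf{D}$ (defined by $c_{y,x} \circ c_{x,y} = \mathrm{Id}$ for all $y \in \mathbf{E}$, which is meaningful because ${}^g y = y$ whenever $y \in \mathbf{E}$) is itself a braided $\Z/p$-crossed subcategory, and $\mathbf{D} \cong \mathbf{E} \boxtimes \mathbf{E}'$ as braided $\Z/p$-crossed categories. Applied with $\mathbf{E} = \mathbf{Vec}_B^{q|_B}$ this yields $\mathbf{D} \cong \mathbf{Vec}_B^{q|_B} \boxtimes \mathbf{E}'$ with $(\mathbf{E}')_e = \mathbf{Vec}_{B^\perp}^{q|_{B^\perp}}$. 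For $g \neq e$, Proposition \ref{rankDg} gives $\mathrm{rank}(\mathbf{D}_g) = |B|$ (the number of $\Z/p$-fixed simples in $\mathbf{Vec}_A^q$), while the factorization forces $\mathrm{rank}(\mathbf{D}_g) = |B| \cdot \mathrm{rank}((\mathbf{E}')_g)$; hence $(\mathbf{E}')_g$ has a unique simple, whose FPdim is $\sqrt{|B^\perp|}$ by the FPdim count. The Tambara-Yamagami fusion rules of Definition \ref{Gty} for $\mathbf{E}'$ then follow exactly as in the proof of Theorem \ref{GTy}, identifying $\mathbf{E}' = \mathbf{D}_{ty}$.

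Finally, one rewrites $\mathbf{Vec}_B^{q|_B} \boxtimes \mathbf{D}_{ty}$ as the graded Deligne product $\mathbf{D}_c \boxtimes_{(\Z/p)} \mathbf{D}_{ty}$, where $\mathbf{D}_c = \mathbf{Vec}_{B \times \Z/p}^{\tilde\omega_c}$ is the trivially-extended pointed $\Z/p$-crossed extension of $\mathbf{Vec}_B^{q|_B}$ (the central extension $\Gamma_c$ is forced to be $B \times \Z/p$ since $H^2(\Z/p, B) = 0$, using $\gcd(p, |B|) = 1$, and $\tilde\omega_c$ is the inflation of $\omega_B$); at each grade $g$, $(\mathbf{D}_c)_g$ is equivalent to $\mathbf{Vec}_B^{q|_B}$ as a braided category (with trivial twist from $g$ because $\tilde\omega_c$ is inflated), so the two expressions agree on each graded component with matching $\Z/p$-crossed braiding. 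The main obstacle I anticipate is the $\Z/p$-crossed M\"uger splitting in the third paragraph: the usual braided proof (trivial intersection $\mathbf{E} \cap \mathbf{E}' = \mathbf{Vec}$ plus a FPdim count) should adapt, but one must verify that the centralizer condition is well-defined grade-by-grade and that the resulting equivalence intertwines both the $\Z/p$-grading and the $\Z/p$-action, which requires careful bookkeeping with the $\Z/p$-crossed braiding axioms.
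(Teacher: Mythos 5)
Your opening reduction ($A=B\oplus B^\perp$, trivial action on $B$, fixed-point-free action on $B^\perp$) matches the paper's. But the load-bearing step of your argument --- the ``$\Z/p$-crossed M\"uger splitting'' asserting that $\mathbf{D}\cong\mathbf{E}\boxtimes\mathbf{E}'$ as braided $\Z/p$-crossed categories whenever $\mathbf{E}\subseteq\mathbf{D}_e$ is non-degenerate and pointwise fixed --- is exactly the content of the theorem, and you assert it rather than prove it. The standard M\"uger factorization argument does not transfer directly: $\mathbf{D}$ is not braided (only $G$-crossed braided), so the double-centralizer dimension identity $\mathrm{FPdim}(\mathbf{E})\,\mathrm{FPdim}(\mathbf{E}')=\mathrm{FPdim}(\mathbf{D})$ that gives essential surjectivity of $(x,y)\mapsto x\otimes y$ is not available off the shelf, and even granting an equivalence of underlying fusion categories one must still show that the grading, the $\Z/p$-action, \emph{and} the crossed braiding all factor. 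You correctly flag this as the main obstacle, but a proof attempt that defers its central lemma to ``should adapt with careful bookkeeping'' has a genuine gap. (One plausible repair --- equivariantize, apply ordinary M\"uger in the resulting MTC, then de-equivariantize --- is itself a nontrivial argument that you would need to carry out.)

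The paper avoids the centralizer machinery entirely and argues by counting on the ENO torsor: since $o_3(\rho)$ vanishes and $H^2(\Z/p,A)=0$ (coprimality), all braided $\Z/p$-crossed extensions of $\mathbf{Vec}_A^q$ with module-map $\rho$ form a single torsor over $H^3(\Z/p,\bbC^\times)\cong\Z/p$. Lemma \ref{L:diag} produces one point of this torsor as a graded Deligne product $\mathbf{D}_c\boxtimes_{(\Z/p)}\mathbf{D}_{ty}$, and twisting $\mathbf{D}_{ty}$ through its own $H^3$-torsor sweeps out all $p$ points of the torsor for $\mathbf{Vec}_A^q$; hence $\mathbf{D}$ itself is such a product for some choice of $\mathbf{D}_{ty}$. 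This both fills the gap in your argument and is shorter; if you want to salvage your structural approach, the cleanest way to justify your splitting lemma in this setting is precisely this torsor count.
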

\begin{proof}  Note that $B$ is a subgroup of $A$, so non-degeneracy of $q|_B$ implies that $A=B\oplus B^\perp$. Also, $q|_{B^\perp}$ will be non-degenerate, so indeed $\mathbf{Vec}_{B^\perp}^{q|_{B^\perp}}$ is a MTC.  Moreover, by definition of $B$, the $\Z/p$-action on $B^\perp$ will be fixed-point free.  Let $\rho$ denote the module-map $\Z/p\to\mathrm{O}(A,q)$ corresponding to $\mathbf{D}$, and let $\rho_\perp$ denote the (fixed-point free) map $\Z/p\to\mathrm{O}(B^\perp,q|_{B^\perp})$.

Both obstructions $o_3,o_4$ vanish for the trivial map $\Z/p\to\mathrm{O}(B,q|_B)$ as well as for $\rho_\perp$, since the corresponding $H^3,H^4$ cohomology groups vanish, and therefore there exist braided $\Z/p$-crossed extensions of $\mathbf{Vec}_B^{q|_B}$ and $\mathbf{Vec}_{B^\perp}^{q|_{B^\perp}}$ for the respective module-maps. Choose one such extension for each. The former will be a pointed (cleft) category we'll call $\mathbf{D}_c$, and the latter, which we'll call $\mathbf{D}_{ty}$, will be of $\Z/p$-Tambara-Yamagami type.

Therefore by Lemma \ref{L:diag} there will be a braided $\Z/p$-crossed extension $\mathbf{D}_c\boxtimes_{(\Z/p)}\mathbf{D}_{ty}$ of $\mathbf{Vec}_A^q$, with module-map $\rho$. Now, the $H^2$-torsor for $\rho$ is trivial, since $H^2(\Z/p,A)=0$. Therefore all braided $\Z/p$-crossed extensions of $\mathbf{Vec}_A^q$ lie on a torsor over $H^3(\Z/p,\bbC^\times)\cong\Z/p$. Thus the original category $\mathbf{D}$ will be an $H^3$-twist of the associativity of  $\mathbf{D}_c\boxtimes_{(\Z/p)}\mathbf{D}_{ty}$.

On the other hand we see that there are precisely $p$  $\Z/p$-Tamabara-Yamagami  braided $\Z/p$-crossed extensions of $\mathbf{Vec}_{B^{\perp}}^{q|_{B^\perp}}$, related to each other through the $H^3$-torsor as well. By the linearity of the Deligne product, twisting $\mathbf{D}_{ty}$ by $\omega$ and then taking the diagonal product with $\mathbf{D}_c$ is the same as twisting $\mathbf{D}_c\boxtimes_{\Z/n}\mathbf{D}_{ty}$ by $\omega$. This means running through the $H^3$-torsor of $\rho$ gives only diagonal products of $\Z/p$-Tambara-Yamagami and a pointed extension, so making a different choice of $\mathbf{D}_{ty}$ if necessary, we get the theorem.
\end{proof}

For example, if $B$ is any subgroup of $A$ with $|A|/|B|$ coprime to $|B|$, then automatically $q|_{B}$ will be non-degenerate. 
 
Within  the VOA literature, \cite{BE} considers a $G=\Z/2$ example  of a similar argument. They consider an order-2 isometry $\sigma$ of a lattice $L$, and decompose $\sigma$ into its eigenspaces: let $L_\pm$ be the largest sublattices of $L$ such that $\sigma|_{L_\pm}$ acts like $\pm 1$. Then $L_+\oplus L_-$ is a sublattice of $L$ of full dimension, and $\cV_L^\sigma$ is a simple current extension of $\cV_{L_+\oplus L_-}^\sigma=\cV_{L_+}\otimes\cV_{L_-}^{\{\pm1\}}$.
 
\appendix

\section{The Naidu and Mason--Ng quasi-Hopf algebras}

\subsection{Definitions and constructions}
\begin{definition}{$G$-crossed Module}\label{Gcm}

A \emph{$G$-crossed module} is a triple $(G,K,\partial)$, where $G$ acts on the left of $K$, and a group homomorphism $\partial\,{:}\,K\rightarrow G$ such that for all $g\in G, h,h_1,h_2\in K$:
\begin{eqnarray}
\label{equivariant_conjugation}
\partial(\prescript{g}{}{h})=g\partial(h)g^{-1}
\\
\label{Peiffer_Identity}
\prescript{\partial(h_1)}{}{h_2}=h_1h_2h_1^{-1}
\end{eqnarray}
\end{definition}
We're ultimately interested in $G$-crossed modules coming from central extensions $K$ of $G$ by ker$\,\partial$.
 
Fix a $G$-crossed module $\mathfrak{X}=(G,K,\partial)$. If $\omega\in C^i(K,\mathbb{\C}^{\times})$, then $G$ acts on the right by  $\omega^g(x_1,\ldots,x_i)=\omega(\prescript{g}{}{x_1},\ldots ,\prescript{g}{}{x_i})$ for $g\in G$. We recall the following notion from \cite{Naidu}: 
\begin{definition}{\cite{Naidu}}
\label{qa3c_definition}
A quasi-abelian 3-cocycle is a tuple 
$(\omega,\gamma,\mu,c)$ such that:
\[\omega\in Z^3(K,\mathbb{C}^{\times}), \gamma\in C^2(G,C^1(K,\C^{\times})), \mu\in C^1(G,C^2(K,\mathbb{C}^{\times})), c\in C^2(K,\mathbb{C}^{\times})\]
and the following conditions hold: 
\begin{eqnarray}\gamma_{g,h}(^kx)\gamma_{gh,k}(x)=\gamma_{h,k}(x)\gamma_{g,hk}(x)\,, &x\in K, \ g,h,k\in G\\
d(\mu_g)=\frac{\omega^g}{\omega}\,, &g\in G\\
d(\gamma_{g,h})=(d\mu)_{g,h}\,, &g,h\in G\\
\frac{c^g(x,y)}{c(x,y)}=\frac{\mu_g(xyx^{-1},x)\gamma_{g\partial(x)g^{-1},g}(y)}{\mu_g(x,y)\gamma_{g,\partial(x)}(y)}\,, & g\in G, x,y\in K 
\\ 
c(xy,z)=\frac{\omega(x,y,z)\omega((xy)z(xy)^{-1},x,y)}{\omega(x,yzy^{-1},y)\gamma_{\partial(x),\partial(y)}(z)}c(x,yzy^{-1})c(y,z)\,,&x,y,z\in K \label{eqn:qa3c_braiding_1}\\
c(x,yz)=\frac{\omega(xyx^{-1},x,z)}{\omega(x,y,z)\omega(xyx^{-1},xzx^{-1},x)\mu_{\partial(x)}(y,z)}c(x,y)c(x,z) \,,&x,y,z\in K \label{eqn:qa3c_braiding_2}\end{eqnarray}
\end{definition}

\begin{notation} \emph{The set of quasi-abelian $3$-cocycles of the crossed module $\mathfrak{X}$ is denoted by $H^3_{qa}(\mathfrak{X},\mathbb{C}^{\times})$}. A quasi-abelian $3$-cocycle $(\omega,\gamma,\mu,c)$ is called normalized if $\omega,\gamma,\mu,c$ have the property that if any of their variables is the identity they will equal $1$. By defining a suitable notion of coboundary, it can be shown that there is no loss in generality by assuming that $(\omega,\gamma,\mu,c)$ is normalized, which we do from now on. See \cite{Naidu} for details.
\end{notation}
Associated to this quasi-abelian $3$-cocycle is a quasitriangular quasi-Hopf algebra: 

\begin{definition}\emph{Naidu quasi-Hopf Algebras}\label{naiqH}

Let $(G,K,\partial)$ be a $G$-crossed modules and $(\omega,\gamma,\mu,c)$ a quasi-abelian $3$-cocycle. Then $\mathbb{C}^{K}\otimes_{\mathbb{C}}\mathbb{C}[G]$ is a vector space with canonical basis $\{\delta_x\otimes g: x\in K,g\in G\}$. The \emph{Naidu quasi-Hopf algebra} is this vector space with quasi-Hopf algebra structure defined by:
\\
\begin{eqnarray}\mathrm{Product:}&
(\delta_x g)(\delta_y h):= \delta_{x,^hy}\gamma_{g,h}(y)^{-1}\delta_y (gh)\\
\mathrm{Unit:}&
1:=\sum_{x\in K}\delta_x \Id\\
\mathrm{Counit:}&
\epsilon(\delta_x g):=\delta_{x,\Id_K}\\
\mathrm{Coproduct:}&
\Delta(\delta_{x}g)=\sum_{a,b\in K, ab=x}\mu_g(a,b)(\delta_a g)\otimes (\delta_b g)\\
\mathrm{Associator:}&
\Phi:=\sum_{x,y,z\in K} \omega(x,y,z)(\delta_x \Id)\otimes (\delta_y \Id)\otimes (\delta_z \Id)\\
\mathrm{Antipode:}&
S(\delta_x g):=\frac{\gamma_{g^{-1},g}(x^{-1})}{\mu_{g}(x,x^{-1)}}\delta_{(x^{-1})^g} g^{-1},\ \alpha:=1,\ \beta:=\sum_{x\in K}\omega(x^{-1},x,x^{-1})\delta_x \Id\\
R\mathrm{-Matrix:}&
R=\sum_{x,y\in K} c(x,y) (\delta_x \Id)\otimes (\delta_y\partial(x))
\end{eqnarray}
\end{definition}
\noindent A Naidu quasi-Hopf algebra will be denoted by $H(\omega,\gamma,\mu,c)$.

\begin{definition}{\cite{MN}}\label{clobj}
A cleft object of $\mathbb{C}_{\omega}^K$ consists of a triple $(G,\sigma,\theta)$ where $G$ is a group acting on the right of $K$ by automorphisms, $\sigma\in C^2(K,(\mathbb{C}^{G})^{\times}),\theta\in C^2(G,(\mathbb{C}^K)^{\times})$ satisfying the following conditions:
\begin{eqnarray}
\theta_{g\cdot x}(y,z)\,\theta_g(x,yz)=\theta_g(xy,z)\,\theta_g(x,y) \,,& g\in K \ x,y,z\in G \label{eqn:cleft_condition_1}\\
\frac{\sigma_x(h,k)\,\sigma_x(g,hk)}{\sigma_x(gh,k)\,\sigma_x(g,h)}=\frac{\omega(g^x,h^x,k^x)}{\omega(g,h,k)} \,,& x\in G \ g,h,k\in K\label{eqn:cleft_condition_2}\\
\frac{\sigma_{xy}(g,h)}{\sigma_x(g,h)\,\sigma_y(g\cdot x,h\cdot x)}=\frac{\theta_g(x,y)\,\theta_h(x,y)}{\theta_{gh}(x,y)} \,,& x,y\in G \ g,h\in K \label{eqn:cleft_condition_3}
\end{eqnarray}
where $\theta_g(x,y)=\theta(x,y)(g), \sigma_x(g,h)=\sigma(g,h)(x)$ for $x,y\in G$ and $g,h\in K$.
\label{cleft_object}
\end{definition}

\begin{definition}\emph{Quantum Cleft Extensions}\label{qclext}

Let $K$ be a group, $\omega\in Z^3(K,\mathbb{C}^{\times})$ and $(G,\theta,\sigma)$ a cleft object of $\mathbb{C}^{\omega}_K$.  The vector space $\mathbb{C}^K\otimes_{\mathbb{C}}\mathbb{C}[G]$ has a canonical basis $\{\delta_x\otimes g:x\in K, g\in G\}$. A \emph{quantum cleft extension} is this vector space with quasi-Hopf algebra structure defined by:
\\
\begin{eqnarray}\mathrm{Product:}&
(\delta_xg)\cdot (\delta_y h)=\delta_{x^g,y}\theta_{x}(g,h)\delta_x (gh)\,, \ \ \ \ x\in K, g,h\in G\\
\mathrm{Unit:}&
1: =\sum_{x\in K}\delta_x \Id_G \\
\mathrm{Counit:}&
\epsilon(\delta_x g):=\delta_{x,\Id_K}\\
\mathrm{Coproduct:}& 
\Delta(\delta_x g):=\sum_{a,b\in K, ab=x}\sigma_g(a,b)(\delta_a g)\otimes (\delta_b g) \,,\ \ \ x\in K, g\in G\\
\mathrm{Associator:}& 
\Phi:=\sum_{x,y,z\in K} \omega(x,y,z)^{-1}(\delta_x \Id_G)\otimes (\delta_y \Id_G)\otimes (\delta_z \Id_G)\\
\mathrm{Antipode:}&
\!\!\!\!\!S(\delta_x g):= \theta_{x^{-1}}(g,g^{-1})^{-1}\sigma_g(x,x^{-1})^{-1}\delta_{(x^{-1})^g}(g^{-1}), \,\\  & \alpha:=1, \,\, 
\beta:=\sum_{x\in K}\omega(x,x^{-1},x)\delta_x \Id_G
\end{eqnarray}
\end{definition}
\begin{example}{Twisted Drinfeld Double}
\\
Fix a group $K$ and a $3$-cocycle $\omega$ of $K$. Consider the following: 
\begin{eqnarray}
\label{double_cleft_1}
\theta_x(g,h):=\frac{\omega(x,g,h)\,\omega(g,h,(gh)^{-1}x(gh))}{\omega(g,g^{-1}xg,h)} \,,& x,g,h\in K
\\ \sigma_g(x,y):= \frac{\omega(x,y,g)\,\omega(g,g^{-1}xg,g^{-1}yg)}{\omega(x,g,g^{-1}yg)} \,, &g,x,y\in K
\end{eqnarray}
One sees that $(K,\theta,\sigma)$ is a cleft object of $\C^K_{\omega}$, and the associated quantum cleft extension is $D^{\omega}(K)$. When we want to emphasize that $D^{\omega}(K)$ comes from a cleft object, we will write $D^{\omega}_{\theta,\sigma}(K)$.
\end{example}

\subsection{Equivalence of Naidu and Mason--Ng quasi-Hopf algebras}
Suppose $(G,K,\partial)$ has a surjective homomorphism $\partial\,{:}\,K\rightarrow G$. $G$ may be considered as a central quotient of $K$ and so acts on it by conjugation. 
\begin{lemma}
Suppose that $\alpha$ is a $3$-cocycle of $G$ with coefficients in $\C^{\times}$, then $\alpha_{\rev}(x,y,z):=\alpha(z,y,x)$ is a $3$-cocycle of $G^{\op}$ with coefficients in $\C^{\times}$.
\end{lemma}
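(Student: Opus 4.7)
The plan is to verify the $3$-cocycle condition directly by substituting the definition and recognizing that it reduces to the original cocycle condition for $\alpha$ with the variables relabeled in reverse order.

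First, I would set up notation carefully to avoid confusion. Let $\cdot$ denote the product in $G$ and let $*$ denote the product in $G^{\op}$, so $x * y = y \cdot x$. The normalized $3$-cocycle condition on a function $\beta\in C^3(G^{\op},\C^\times)$ is
\begin{equation*}
\beta(y,z,w)\,\beta(x,y*z,w)\,\beta(x,y,z) \;=\; \beta(x*y,z,w)\,\beta(x,y,z*w).
\end{equation*}
Applying this to $\beta = \alpha_{\rev}$ and using $\alpha_{\rev}(u,v,w)=\alpha(w,v,u)$, each factor on the left becomes $\alpha(w,z,y)$, $\alpha(w,z\cdot y,x)$, $\alpha(z,y,x)$, while each factor on the right becomes $\alpha(w,z,y\cdot x)$, $\alpha(w\cdot z,y,x)$.

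Next, I would observe that the resulting identity
\begin{equation*}
\alpha(w,z,y)\,\alpha(w,z\cdot y,x)\,\alpha(z,y,x) \;=\; \alpha(w,z,y\cdot x)\,\alpha(w\cdot z,y,x)
\end{equation*}
is precisely the (standard, non-reversed) $3$-cocycle condition for $\alpha$ on $G$, evaluated at the four-tuple $(w,z,y,x)$. Since $\alpha$ is a $3$-cocycle on $G$, this identity holds for all choices of $w,z,y,x\in G$, and hence $\alpha_{\rev}$ satisfies the cocycle condition on $G^{\op}$.

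Finally, if one wants $\alpha_{\rev}$ to be normalized, one checks that normalization of $\alpha$ (i.e., $\alpha(g,h,k)=1$ whenever any argument is $e$) immediately implies normalization of $\alpha_{\rev}$, since the set of arguments is unchanged. There is really no obstacle here: the lemma is a bookkeeping statement, and the only thing to watch is getting the order of multiplication right when translating between $G$ and $G^{\op}$. I would therefore present the proof as a single chain of equalities together with the identification with the cocycle condition on the tuple $(w,z,y,x)$.
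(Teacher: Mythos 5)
Your proof is correct, and the computation checks out: substituting $\alpha_{\rev}(u,v,w)=\alpha(w,v,u)$ and the opposite products into the cocycle identity for $G^{\op}$ yields exactly the cocycle identity for $\alpha$ on $G$ evaluated at $(w,z,y,x)$, with the factors merely reordered (harmless since $\C^\times$ is commutative). The paper states this lemma without proof, treating it as routine bookkeeping, and your direct verification is precisely the argument that is being omitted.
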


\begin{lemma}
 Let $G$ and $K$ be as before. The map $I\,{:}\,C^2(K,(\mathbb{C}^G)^{\times})\rightarrow C^1(G,C^2(K,\mathbb{C}^{\times}))$ defined by $I(f)(a)(x,y)=f(x,y)(a)$ is an isomorphism of abelian groups.
\label{identification}
\end{lemma}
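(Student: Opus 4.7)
The plan is to view both sides as set-theoretic maps $K \times K \times G \to \mathbb{C}^\times$ and recognize $I$ as the canonical ``currying'' bijection. Unwinding definitions, an element $f \in C^2(K,(\mathbb{C}^G)^\times)$ is a function $K \times K \to (\mathbb{C}^G)^\times$, i.e.\ a function $K \times K \times G \to \mathbb{C}^\times$; an element $h \in C^1(G,C^2(K,\mathbb{C}^\times))$ is a function $G \to C^2(K,\mathbb{C}^\times)$, i.e.\ again a function $G \times K \times K \to \mathbb{C}^\times$. The map $I$ is nothing but the reordering $(x,y,a) \mapsto (a,x,y)$.

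First I would check well-definedness: for each fixed $a \in G$, the map $(x,y) \mapsto f(x,y)(a)$ is a $\mathbb{C}^\times$-valued function on $K \times K$, hence an element of $C^2(K,\mathbb{C}^\times)$, so $I(f)\in C^1(G, C^2(K, \mathbb{C}^\times))$. Next, $I$ is a homomorphism of abelian groups, because both group operations are pointwise multiplication inherited from $\mathbb{C}^\times$:
\begin{equation*}
I(f \cdot g)(a)(x,y) = (f\cdot g)(x,y)(a) = f(x,y)(a)\, g(x,y)(a) = \bigl(I(f)(a)\cdot I(g)(a)\bigr)(x,y).
\end{equation*}

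To show bijectivity, define the obvious candidate inverse
\begin{equation*}
J\,{:}\, C^1(G, C^2(K,\mathbb{C}^\times)) \to C^2(K,(\mathbb{C}^G)^\times), \qquad J(h)(x,y)(a) := h(a)(x,y).
\end{equation*}
The same well-definedness check applies, and a direct computation gives $J(I(f))(x,y)(a) = I(f)(a)(x,y) = f(x,y)(a)$ and symmetrically $I(J(h)) = h$, establishing that $J = I^{-1}$.

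The ``hard part'' is essentially nonexistent: the statement is a formal currying identity between two presentations of the same function space, and everything reduces to unpacking the definitions. The only minor care needed concerns normalization conventions (should the paper demand normalized cochains in both places): $I$ preserves normalization since $f(x,y)(a)=1$ whenever $x$ or $y$ equals $e_K$ if and only if $I(f)(a)(x,y)=1$ under the same condition, and likewise for the $G$-variable.
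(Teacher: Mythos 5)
Your proof is correct; the paper in fact omits any proof of this lemma, treating it as the evident currying bijection, and your argument is exactly the standard unpacking one would supply. No gaps.
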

Let $\overline{z}$ denote complex conjugation for a complex number.
\begin{lemma}
\label{Cleft_Object}
Suppose that $(\alpha,\gamma,\mu,c)$ is a quasi-abelian $3$-cocycle of the $G$-crossed module $(K,G,\partial)$. Then there is an induced cleft object of $\mathbb{C}^{K^{\op}}_{\overline{\alpha}_{\rev}}$ with $G^{\op}$ acting on $K^{\op}$.
\end{lemma}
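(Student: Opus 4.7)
The plan is to produce the cleft-object data $(G^{\op},\sigma,\theta)$ by transporting the two ``linear'' pieces of Naidu's quasi-abelian 3-cocycle (namely $\mu$ and $\gamma$) along the evident passages $K\leftrightarrow K^{\op}$ and $G\leftrightarrow G^{\op}$, and then to verify axioms \eqref{eqn:cleft_condition_1}--\eqref{eqn:cleft_condition_3} by rewriting each as one of the three non-braiding identities in Definition~\ref{qa3c_definition}. First I would specify the right action of $G^{\op}$ on $K^{\op}$ by $g\cdot x := {}^{x}g$, observing that passing to opposite groups converts left actions into right actions and that the crossed-module axioms guarantee this is an action by automorphisms (automorphisms of $K$ are automatically automorphisms of $K^{\op}$). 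Then, invoking Lemma~\ref{identification} to identify $C^2(K,(\bbC^G)^\times)$ with $C^1(G,C^2(K,\bbC^\times))$, together with the obvious analogue for $\theta$ and $\gamma$, I would set
\[
\sigma_x(g,h)\;:=\;\mu_x(h,g),\qquad \theta_g(x,y)\;:=\;\overline{\gamma_{y,x}(g)},
\]
where the reversals $(g,h)\mapsto(h,g)$ and $(x,y)\mapsto(y,x)$ encode the opposite-group conventions; after the standard normalization of $\gamma$ to take values in roots of unity, the complex conjugation on $\theta$ is simply inversion.

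Next I would verify the three cleft-object axioms in turn. Axiom \eqref{eqn:cleft_condition_1} becomes, after expanding $g\cdot_{\op}x={}^{x}g$ and the $G^{\op}$-product $xy=yx$, exactly Naidu's first identity under the relabelling $(a,b,c,X)\leftrightarrow(z,y,x,g)$. Axiom \eqref{eqn:cleft_condition_2} translates, upon writing $\overline{\alpha}_{\rev}(u,v,w)=\overline{\alpha(w,v,u)}$ and using unitarity of $\alpha$ (so $\overline{\alpha}=\alpha^{-1}$), into Naidu's identity $d(\mu_x)=\alpha^{x}/\alpha$. Axiom \eqref{eqn:cleft_condition_3} unwinds, after expanding the $K^{\op}$ and $G^{\op}$ products, into a straightforward rearrangement of Naidu's identity $d(\gamma_{g,h})=(d\mu)_{g,h}$; the inversion built into $\theta$ is precisely what makes both sides of the rearranged equation agree. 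The braiding datum $c$ plays no role here, since a cleft object consists only of the triple $(G^{\op},\sigma,\theta)$ together with the action.

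The main obstacle is not mathematical but purely notational: keeping straight the simultaneous reversals $K\to K^{\op}$ and $G\to G^{\op}$, the conversion between left $G$-actions on $K$ and right $G^{\op}$-actions on $K^{\op}$ (and the corresponding swap between left and right $G$-module structures on $\bbC^K$ and $C^2(K,\bbC^\times)$), and the way that the complex conjugation in $\overline{\alpha}_{\rev}$ interacts with group-theoretic inversion in the coboundary calculations. The last discrepancy is reconciled by unitarity of $\bbC^\times$-valued cocycles on finite groups, which is standard and entails no loss of generality. Once the conventions are fixed each axiom becomes a direct algebraic rewriting. Normalization of $(\sigma,\theta)$ is inherited immediately from that of the normalized quasi-abelian 3-cocycle $(\alpha,\gamma,\mu,c)$, so no separate check is needed.
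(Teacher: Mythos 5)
Your proposal matches the paper's proof: the same right $G^{\op}$-action by conjugation (${}^x g$), the same transported data $\sigma_x(g,h)=\mu_x(h,g)$ and $\theta_g(x,y)=\gamma_{y,x}(g)^{-1}$ (your complex conjugate agrees with the paper's inverse under the unitarity you invoke), and the same reduction of the three cleft axioms to the three non-braiding identities of the quasi-abelian $3$-cocycle. The paper simply declares this verification routine, so your sketch is, if anything, slightly more explicit than the original.
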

\begin{proof}
Let $x,y\in K$ then $xyx^{-1}=(y\cdot_{\op} x)\cdot x^{-1}=x^{-1}\cdot_{\op}(y\cdot_{\op} x)$ and so, $y\triangleleft x =\partial(x^{-1})\cdot_{\op} y\cdot_{\op} \partial(x)$ gives a right action of $G^{\op}$ on $K^{\op}$. Let $\overline{\theta}_x(g,h):=\gamma_{h,g}(x)^{-1}$ for $h,g\in G, x\in K$, and $ \overline{\sigma}_g(x,y):=\mu_{g}(y,x)$ for $x,y\in K, y\in G$. It is routine to verify that $(G^{\op},\overline{\theta},\overline{\sigma})$ will satisfy Eqs.\ \eqref{eqn:cleft_condition_1},  \eqref{eqn:cleft_condition_2}, and \eqref{eqn:cleft_condition_3}
\end{proof}
From now on we will fix a quasi-abelian 3-cocycle $(\alpha,\gamma,\mu,c)$ of $(G,K,\partial)$, and denote the quantum cleft extension induced by this as $D^{\overline{\alpha}_{\rev}}_{\overline{\theta},\overline{\sigma}}(K^{\op},A)$, where $A=\ker(\partial)$. When the specific cleft object of $\C^K_{\overline{\alpha}_{\rev}}$ is clear from context, we will use the shorthand $D^{\overline{\alpha}_{\rev}}(K^{\op},A)$.
Let $i\,{:}\,\mathbb{C}^{K{^{\op}}}_{\overline{\alpha}_{\rev}}\rightarrow \mathbb{C}^{K{^{\op}}}_{\overline{\alpha}_{\rev}}\otimes_{\C} \mathbb{C}[G^{\op}] $ be the map determined by $i(\delta_x):=\delta_x\Id_G$ for $g\in K^{\op}$. Similarly, let $p\,{:}\,\mathbb{C}^{K{^{\op}}}_{\overline{\alpha}_{\rev}}\otimes \mathbb{C}[G^{\op}] \rightarrow \mathbb{C}G^{\op}$, as $p(\delta_x g)=\delta_{\Id_K,x}g$ for $x\in K, g\in G$. Here $\delta_{\Id_K, g}$ denotes the Kronecker delta on $K$.
\begin{lemma}
Let $D^{\overline{\alpha}_{\rev}}_{\overline{\theta},\overline{\sigma}}(K^{\op},A)$ be the induced quantum cleft extension from the cleft object in Lemma \ref{Cleft_Object}. This is a quasi-Hopf algebra quotient of $D^{\overline{\alpha}_{\rev}}_{\theta,\sigma}(K^{\op})$.
\label{induced_quasi_quotient_lemma}
\end{lemma}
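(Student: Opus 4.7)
The plan is to exhibit the quotient map via the natural surjection $\partial: K^{\op} \twoheadrightarrow G^{\op}$, defining
\[
\pi: D^{\overline{\alpha}_{\rev}}_{\theta,\sigma}(K^{\op}) \longrightarrow D^{\overline{\alpha}_{\rev}}_{\overline{\theta},\overline{\sigma}}(K^{\op}, A), \qquad \pi(\delta_x \otimes g) := \delta_x \otimes \partial(g),
\]
extended linearly. Surjectivity of $\pi$ is immediate from surjectivity of $\partial$, and the kernel is generated by $\delta_x(g - ga)$ for $a \in A = \ker\partial$. What remains is to verify that $\pi$ intertwines each of the seven structure maps of the two quasi-Hopf algebras.

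Several compatibilities are immediate. The unit $\sum_{x \in K} \delta_x \Id_K$ maps to the unit $\sum_{x \in K} \delta_x \Id_G$; the counit $\delta_x g \mapsto \delta_{x,\Id_K}$ depends only on the $\mathbb{C}^{K^{\op}}$-factor and is preserved verbatim; and the associator $\Phi$ and the Drinfeld element $\beta$ live entirely in $\mathbb{C}^{K^{\op}} \otimes \Id$ and are expressed via the same cocycle $\overline{\alpha}_{\rev}$ on both sides, so they transport trivially. Using the Peiffer identity $\prescript{\partial(g)}{}h = ghg^{-1}$, the conjugation action of $K^{\op}$ on itself descends to the action $y \triangleleft \partial(g)$ of $G^{\op}$ on $K^{\op}$ appearing in $D^{\overline{\alpha}_{\rev}}_{\overline{\theta},\overline{\sigma}}(K^{\op}, A)$. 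With this identification of the actions, preservation of the product, coproduct, and antipode reduces to verifying the two cocycle identities
\[
\theta_x(g,h) \;=\; \gamma_{\partial h,\partial g}(x)^{-1}, \qquad \sigma_g(a,b) \;=\; \mu_{\partial g}(b, a),
\]
for all $x, g, h, a, b \in K^{\op}$; the antipode compatibility is then a direct consequence of these two together with $\partial(g^{-1}) = \partial(g)^{-1}$.

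The main obstacle is the first of these identities. The plan is to expand $\theta_x(g,h) = \overline{\alpha}_{\rev}(x,g,h)\,\overline{\alpha}_{\rev}(g,h,(gh)^{-1}xgh)\,\overline{\alpha}_{\rev}(g, g^{-1}xg, h)^{-1}$ and then apply the quasi-abelian 3-cocycle axioms of Definition \ref{qa3c_definition} in sequence: the coboundary condition $d(\mu_g) = \omega^g/\omega$ converts the $\omega$-quotient into a combination of $\mu$-values; the relation $d(\gamma_{g,h}) = (d\mu)_{g,h}$ rewrites this in terms of $\gamma$; and the 1-cocycle identity $\gamma_{g,h}(\prescript{k}{}{x})\,\gamma_{gh,k}(x) = \gamma_{h,k}(x)\,\gamma_{g,hk}(x)$ collapses the remaining expression to the single value $\gamma_{\partial h,\partial g}(x)^{-1}$. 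The $\sigma$-identity is easier and follows directly from the Drinfeld-double formula $\sigma_g(x,y) = \omega(x,y,g)\,\omega(g, g^{-1}xg, g^{-1}yg)\,\omega(x,g,g^{-1}yg)^{-1}$ given in the twisted Drinfeld double example together with $d(\mu_g) = \omega^g/\omega$.
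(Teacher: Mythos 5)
There is a genuine gap: your quotient map is the wrong map. You take $\pi(\delta_x\otimes g)=\delta_x\otimes\partial(g)$, whereas the correct map (the one the paper uses) is $\pi(\delta_x g)=c(g,x)\,\delta_x\,\overline{g}$, twisted by the braiding component $c$ of the quasi-abelian $3$-cocycle. The difference is not cosmetic. For your untwisted $\pi$ to be an algebra map you would need exactly the identity you state, $\theta_x(g,h)=\gamma_{\partial h,\partial g}(x)^{-1}$, i.e.\ $\theta_x(g,h)\,\gamma_{\partial h,\partial g}(x)=1$. But the actual axiom \eqref{eqn:qa3c_braiding_1} of Definition \ref{qa3c_definition} says that this product equals $c(g,x)\,c(h,gxg^{-1})/c(hg,x)$, which is not $1$ in general --- $c$ is the nontrivial braiding data that ultimately produces the $R$-matrix of $D^{\overline{\alpha}_{\rev}}_{\overline{\theta},\overline{\sigma}}(K^{\op},A)$, and it cannot be removed by the coboundary relations $d(\mu_g)=\omega^g/\omega$ and $d(\gamma_{g,h})=(d\mu)_{g,h}$; those relations only control $\omega$-ratios, not $c$. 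The same problem occurs for the coproduct: the relation between $\sigma_g(a,b)$ and $\mu_{\partial g}(b,a)$ is governed by \eqref{eqn:qa3c_braiding_2} and again carries a factor $c(g,\cdot)$-type correction. So your two ``cocycle identities'' are false in general, and the derivation you sketch from the quasi-abelian $3$-cocycle axioms cannot close.

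Once the map is corrected to $\pi(\delta_x g)=c(g,x)\,\delta_x\overline{g}$, the algebra and coalgebra compatibilities become precisely \eqref{eqn:qa3c_braiding_1} and \eqref{eqn:qa3c_braiding_2}, and the rest of your outline (unit, counit, associator, and the commuting diagram with $i$ and $p$) goes through essentially as in the paper. Note also that the kernel is then spanned by elements $c(g,x)^{-1}\delta_x g-c(k,x)^{-1}\delta_x k$ with $\partial(g)=\partial(k)$ (cf.\ Lemma \ref{kernel_description}), not by $\delta_x(g-ga)$ as you write; this matters later when identifying which simples descend to the quotient.
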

\begin{proof}
 Let $\partial\,{:}\,K^{\op}\rightarrow G^{\op}$ denote the quotient map. As mentioned in \cite{MN}, to prove this lemma it suffices to find a quasi-bialgebra map $\pi\,{:}\,D^{\overline{\alpha}_{\rev}}_{\theta,\sigma}(K^{\op})\rightarrow D^{\overline{\alpha}_{\rev}}_{\overline{\theta},\overline{\sigma}}(K^{\op},A)$ such that the following diagram commutes: 
\begin{center}
\begin{tikzcd}
\mathbb{C}^{K^{\op}}\arrow[r,"i"]\arrow[d,"\Id"] &D^{\overline{\alpha}_{\rev}}_{\theta,\sigma}(K^{\op})\arrow[r,"p"]\arrow[d,"\pi"]&\mathbb{C} K^{\op}\arrow[d,"\partial"]\\ 
\mathbb{C}^{K^{\op}}\arrow[r,"i"]& D^{\overline{\alpha}_{\rev}}_{\overline{\theta},\overline{\sigma}}(K^{\op},A)\arrow[r,"p"]& \mathbb{C} G^{\op}
\end{tikzcd}
\end{center}
As $D^{\overline{\alpha}_{\rev}}_{\theta,\sigma}(K^{\op})$ has a basis given by $\{\delta_x g\}_{(x,g)\in K\times G}$ and $D^{\overline{\alpha}_{\rev}}_{\overline{\theta},\overline{\sigma}}(K^{\op},A)$ has a basis given by $\{\delta_x \overline{g}\}_{(x,\overline{G})\in K\times G}$, we may define a linear map $\pi$ by $\pi(\delta_x g):=c(g,x)\delta_x\overline{g}$. Here $c$ is the last component coming from the quasi-abelian 3-cocycle $(\alpha,\gamma,\mu,c)$. To see that this makes the diagram commute, it suffices to check that it commutes on the basis. Recall that $(\alpha,\gamma,\mu,c)$ is normal. By definition:
\begin{eqnarray}(\pi \circ i)(\delta_x)=c(\Id_G,x)\delta_x\Id_G=\delta_x\Id_G=i(\delta_x)\Rightarrow \pi\circ i =i \nonumber\\
(p\circ \pi)(\delta_x g )=p(c(g,x)\delta_x\overline{g})=\delta_{\Id_K,x}c(g,x)\overline{g}=\delta_{\Id_K,x}\overline{g}\nonumber\\
(\partial \circ p)(\delta_x g)=\partial(\delta_{\Id_K,x}g)=\delta_{\Id_K,x}\overline{g}\Rightarrow p\circ \pi = \partial\circ p\nonumber\end{eqnarray} This shows that indeed the diagram does commute. 

It suffices now to check that $\pi$ is a quasi-bialgebra morphism. This is true when $\pi$ is an algebra morphism, coalgebra morphism and preserves the associators. We check firstly that it is an algebra morphism: 
\begin{eqnarray}\pi((\delta_x g)(\delta_y h))=\pi(\delta_{x\triangleleft g, y}\,\theta_x(g,h)\,\delta_x(g\cdot_{\op}h))=\delta_{x\triangleleft g, y}\,\theta_x(g,h)(c(g\cdot_{\op} h,x)\,\delta_x(\overline{g}\cdot_{\op}\overline{h}))
\\
\pi(\delta_x g)\cdot \pi(\delta_y h)=c(g,x)\,c(h,y)\,\overline{\theta}_{x}(\overline{g},\overline{h})\,\delta_{x\triangleleft \overline{g},\overline{y}}\,\delta_x(\overline{g}\cdot_{\op} \overline{h})
\end{eqnarray}
Since $A$ is central in $K$, we see that the above two equations are equal if and only if:
\begin{equation}\delta_{gxg^{-1},y}\,c(g,x)\,c(h,y)\,\overline{\theta}_x(\overline{g},\overline{h})=\delta_{gxg^{-1},y}\,c(hg,x)\,\theta_x(g,h)\Leftrightarrow c(g,x)\,c(h,gxg^{-1})\,\overline{\theta}_x(\overline{g},\overline{h})=c(hg,x)\,\theta_x(g,h)\label{eqn:algebra_hom_lemma}\end{equation}
Well, recall that 
\[\theta_x(g,h):=\frac{\overline{\alpha}_{\rev}(x,g,h)\,\overline{\alpha}_{\rev}(g,h,(gh)x(gh)^{-1})}{\overline{\alpha}_{\rev}(g,gxg^{-1},h)}=\Bigg(\frac{\alpha(h,g,x)\,\alpha((gh)x(gh)^{-1},h,g)}{\alpha(h,gxg^{-1},g)}\Bigg)^{-1}\]
This combined with the fact that $\overline{\theta}_x(\overline{g},\overline{h})=\gamma_{\overline{h},\overline{g}}(x)^{-1}$, we see \eqref{eqn:algebra_hom_lemma} is equivalent to 
\begin{equation}
c(hg,x)=\frac{\alpha(h,g,x)\,\alpha((gh)x(gh)^{-1},h,g)}{\alpha(h,gxg^{-1},g)\,\gamma_{\overline{h},\overline{g}}(x)}c(h,gxg^{-1})\,c(g,x)
\end{equation}
This is just \eqref{eqn:qa3c_braiding_1} from Definition \ref{qa3c_definition}. So indeed it is an algebra morphism. Similarly, the fact that $\pi$ is a coalgebra morphism follows immediately from Condition \eqref{eqn:qa3c_braiding_2} of Definition \ref{qa3c_definition}. Lastly, the associator of $D^{\overline{\alpha}_{\rev}}_{\theta,\sigma}(K^{\op})$ is $\Phi:=\sum_{a,b,c\in K^{\op}}\alpha_{\rev}(a,b,c)(\delta_a \Id_K)\otimes (\delta_b \Id_K)\otimes (\delta_c \Id_K)$. This is left unchanged by $\pi$ as $\pi(\delta_a \Id_K)=\delta_a\Id_G$. As $D^{\overline{\alpha}_{\rev}}_{\overline{\theta},\overline{\sigma}}(K^{\op},A)$ is a cleft object of $\mathbb{C}^{K^{\op}}_{\overline{\alpha}_{\rev}}$, this is also its associator. So $\pi$ is a quasi-bialgebra morphism and we are done.
\end{proof}
The fact that $D^{\overline{\alpha}_{\rev}}_{\overline{\theta},\overline{\sigma}}(K^{\op},A)$ is a quasi-Hopf algebra quotient is essential to get a braiding on \textbf{Rep}$\,\, D^{\overline{\alpha}_{\rev}}_{\overline{\theta},\overline{\sigma}}(K^{\op},A)$. For recall that $D^{\overline{\alpha}_{\rev}}_{\theta,\sigma}(K^{\op})$ is a quasitriangular quasi-Hopf algebra with $R$-matrix given by:
\begin{equation}
\label{eqn:R_matrix_drinfeld_double}
R_{\ D^{\overline{\alpha}_{\rev}}_{\theta,\sigma}(K^{\op})}:=\sum_{x,y\in K^{\op}} (\delta_x \Id_K)\otimes (\delta_y x)
\end{equation}
Applying $\pi\,{:}\,D^{\overline{\alpha}_{\rev}}_{\theta,\sigma}(K^{\op})\rightarrow D^{\overline{\alpha}_{\rev}}_{\overline{\theta},\overline{\sigma}}(K^{\op},A)$ we see that $D^{\overline{\alpha}_{\rev}}_{\overline{\theta},\overline{\sigma}}(K^{\op},A)$ has an $R$-matrix given by 
\begin{equation}
\label{eqn:R_matrix_quantum_cleft}
R_{D^{\overline{\alpha}_{\rev}}
_{\overline{\theta},\overline{\sigma}}
(K^{\op},A)}:=\pi(R_{\ D^{\overline{\alpha}
_{\rev}}_{\theta,\sigma}
(K^{\op})})=\sum_{x,y\in K^{\op}}c(x,y) 
(\delta_x\Id_K)\otimes (\delta_y 
\partial(x))
\end{equation}
The astute reader will notice the $R$-matrix  of $D^{\overline{\alpha}_{\rev}}
_{\overline{\theta},\overline{\sigma}}
(K^{\op},A)$ is the $R$-matrix of the quasitriangular quasi-Hopf algebra $H(\alpha,\gamma,\mu,c)$. As mentioned at the start of this section the Mason-Ng description will be the same as Naidu's description of the equivariantization. The slight difference between the two descriptions occurs because the Mason-Ng description utilizes right group actions instead of left actions. Due to this we need to squint at the Mason-Ng description to get the equivalence. The first step to this was noticing that a quasi-abelian 3-cocycle gives a cleft object of the opposite group. The next step we will show is that $D^{\overline{\alpha}_{\rev}}
_{\overline{\theta},\overline{\sigma}}
(K^{\op},A)$ is equivalent as a quasitriangular quasi-Hopf algebra to some twist of $H(\alpha,\gamma,\mu,c)$, with an appropriate alteration of the antipode structure. 

\begin{fact}{\cite[Proposition 10.10]{bulacu_qha}}

Let $H$ be a quasitriangular quasi-bialgebra and $F$ a gauge transformation. Then $H_F$ is also a quasitriangular quasi-bialgebra with $R$-matrix given by $R_F:=F_{21}RF^{-1}$. Here we write $F:=F^1\otimes F^2$ in Sweedler notation so $F_{21}:=F^2\otimes F^1$. 
\end{fact}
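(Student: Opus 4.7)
The plan is to verify the three quasitriangularity axioms for $(H_F, R_F)$ directly. I would take as given the standard result that the twist $H_F = (H, \Delta_F, \epsilon, \Phi_F)$, with $\Delta_F(h) = F\Delta(h)F^{-1}$ and twisted associator
\[\Phi_F \;=\; F_{23}\,(\mathrm{id} \otimes \Delta)(F)\;\Phi\;(\Delta \otimes \mathrm{id})(F^{-1})\,F_{12}^{-1},\]
is already a quasi-bialgebra, so only the axioms involving $R_F := F_{21} R F^{-1}$ remain to be checked.

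For quasi-cocommutativity $\Delta_F^{\op}(h)\,R_F = R_F\,\Delta_F(h)$, I would substitute $\Delta_F^{\op}(h) = F_{21}\,\Delta^{\op}(h)\,F_{21}^{-1}$ and $R_F = F_{21} R F^{-1}$. The middle $F_{21}^{-1}F_{21}$ on the left hand side cancels, and then the original relation $\Delta^{\op}(h)\,R = R\,\Delta(h)$ gives $F_{21} R\,\Delta(h)\,F^{-1}$, which is exactly $R_F \Delta_F(h)$. This is essentially a one-line computation.

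For the hexagon $(\Delta_F \otimes \mathrm{id})(R_F) = (\Phi_F)_{312}\,(R_F)_{13}\,(\Phi_F)_{132}^{-1}\,(R_F)_{23}\,\Phi_F$, I would use the identity $(\Delta_F \otimes \mathrm{id})(X) = F_{12}\,(\Delta \otimes \mathrm{id})(X)\,F_{12}^{-1}$ to pull the outer $F_{12}^{\pm 1}$ outside, then expand $(\Delta \otimes \mathrm{id})(R_F) = (\Delta \otimes \mathrm{id})(F_{21})\,(\Delta \otimes \mathrm{id})(R)\,(\Delta \otimes \mathrm{id})(F^{-1})$, and apply the untwisted hexagon $(\Delta \otimes \mathrm{id})(R) = \Phi_{312}\,R_{13}\,\Phi_{132}^{-1}\,R_{23}\,\Phi$ in the middle. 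Reorganizing the resulting $F$-legs according to the explicit formula for $\Phi_F$ above packages the expression into the required right-hand side. The other hexagon, for $(\mathrm{id} \otimes \Delta_F)(R_F)$, is checked symmetrically with $F_{23}$ playing the role of $F_{12}$.

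The main obstacle, and the only step demanding real care, is the tensor-leg bookkeeping in the hexagon verification: the $F$-factors produced by $(\Delta \otimes \mathrm{id})(F_{21})$, $(\Delta \otimes \mathrm{id})(F^{-1})$, and the conjugating $F_{12}^{\pm 1}$ must be commuted past $R_{13}$ and $R_{23}$ using quasi-cocommutativity, and then recognized as exactly the three copies of $\Phi_F$ in the positions $\Phi_{312}$, $\Phi_{132}^{-1}$, and $\Phi_{123}$. This matching is forced by the fact that $\Phi_F$ was designed so as to make $\Delta_F$ coassociative, i.e.\ the same coherence for $F$ that governs the pentagon for $H_F$ is precisely what makes the hexagon for $R_F$ work out.
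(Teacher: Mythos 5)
Your proposal is correct and follows the same direct verification as the cited reference (the paper itself offers no proof, only the citation to \cite[Proposition 10.10]{bulacu_qha}): the one-line conjugation argument for quasi-cocommutativity and the leg-by-leg reassembly of the hexagons using the explicit formula for $\Phi_F$ are exactly the standard computation. You have also correctly flagged the only delicate point, namely commuting the $F$-legs past $R_{13}$ and $R_{23}$ via the intertwining relation before recognizing the three copies of $\Phi_F$.
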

\begin{fact}{\cite[Proposition 3.23]{bulacu_qha}\cite[Lemma 2.3]{BuN}}

Let $H$ be a quasi-Hopf algebra. There exists a twist $f\in H\otimes H$ such that $S\,{:}\,H^{\op,\coop}\rightarrow H_f$ is a quasi-Hopf algebra morphism. Moreover, if $H$ is finite dimensional this is an isomorphism. Furthermore, this is a morphism of quasitriangular quasi-Hopf algebras. That is $(S\otimes S)(R)=R_f$. 
\end{fact}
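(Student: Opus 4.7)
The plan is to follow Drinfeld's original construction of the antipode twist, adapted to the quasitriangular setting. The key observation is that in an ordinary Hopf algebra the antipode is both an algebra and a coalgebra anti-homomorphism, so $S\colon H^{\op,\coop}\to H$ is a bialgebra map on the nose; in a quasi-Hopf algebra the associator $\Phi$ obstructs the identity $\Delta\circ S=(S\otimes S)\circ\Delta^{\op}$, but the obstruction is precisely conjugation by an explicit element $f\in H\otimes H$. Absorbing this conjugation into a gauge twist of the target thus converts $S$ into an honest quasi-Hopf morphism $H^{\op,\coop}\to H_f$.

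First, I would construct $f$ explicitly from the quasi-Hopf data $(\Phi,\alpha,\beta)$. Writing $\Phi=\sum X_i\otimes Y_i\otimes Z_i$ and $\Phi^{-1}=\sum P_j\otimes Q_j\otimes R_j$, Drinfeld's formula builds $f$ as a specific product of $(S\otimes S)$-images of these tensor factors, together with $\alpha,\beta$ and applications of $\Delta$. The essential property to check is that $f$ is invertible (so, by the preceding fact, it is a legitimate gauge transformation) and satisfies
\[
f\cdot\Delta(S(h))\cdot f^{-1}=(S\otimes S)(\Delta^{\op}(h))\qquad\forall h\in H.
\]
The verification is a direct but lengthy computation: one expands both sides using the antipode axioms (which express $S$ against the coproduct via $\alpha$ and $\beta$) together with the pentagon for $\Phi$, and reduces them to a common normal form.

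Next, I would check that the $f$-twisted associator $\Phi_f=(1\otimes f)(\mathrm{id}\otimes\Delta)(f)\,\Phi\,(\Delta\otimes\mathrm{id})(f^{-1})(f^{-1}\otimes 1)$ coincides with $(S\otimes S\otimes S)(\Phi^{-1}_{321})$, so that $S$ intertwines the quasi-bialgebra structures; the unit, counit, and $\alpha,\beta$ conditions are then routine to verify. For finite-dimensional $H$ the antipode is bijective (a standard result), so $S$ is an isomorphism in that case. For the quasitriangular part, apply $(S\otimes S)$ to the hexagon relations and to the intertwiner $R\,\Delta(h)=\Delta^{\op}(h)\,R$; combining with the displayed identity shows that $(S\otimes S)(R)$ satisfies the defining relations of an $R$-matrix on $H_f$. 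Since by the preceding fact the $R$-matrix of $H_f$ is $R_f=f_{21}Rf^{-1}$, uniqueness of the $R$-matrix satisfying those relations forces $(S\otimes S)(R)=R_f$.

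The main obstacle is the construction of $f$ together with the verification of its intertwining property: this is the technical heart of Drinfeld's argument and the step that genuinely distinguishes the quasi-Hopf case from the classical one, requiring careful bookkeeping of the antipode axioms against the pentagon for $\Phi$. Once $f$ is in hand, the associator, antipode, and $R$-matrix identities all follow formally from the same axioms together with the preceding fact.
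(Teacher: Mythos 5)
The paper does not actually prove this statement: it is quoted as a Fact with references to \cite{bulacu_qha} and \cite{BuN}, so there is no internal proof to compare against. Your outline follows the same route as those sources: Drinfeld's twist $f$ assembled from $\Phi^{\pm1}$, $\alpha$, $\beta$; the intertwining identity $f\,\Delta(S(h))\,f^{-1}=(S\otimes S)(\Delta^{\op}(h))$; the identification of the twisted associator with $(S\otimes S\otimes S)(\Phi^{-1}_{321})$; and bijectivity of $S$ in the finite-dimensional case quoted as known. Up to the deferred (admittedly lengthy) computations, that part of the plan is sound and is exactly the standard argument.

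The genuine gap is in the last step. You argue that $(S\otimes S)(R)$ satisfies the defining relations of an $R$-matrix on $H_f$ and then invoke ``uniqueness of the $R$-matrix satisfying those relations'' to conclude $(S\otimes S)(R)=R_f$. Quasitriangular structures are not unique: a (quasi-)Hopf algebra can carry several $R$-matrices (already $\bbC[\Z/2]$ has two), so knowing that $(S\otimes S)(R)$ is \emph{an} $R$-matrix for $H_f$ compatible with $\Delta_f$ and $\Phi_f$ does not identify it with $f_{21}Rf^{-1}$. And if what you intend is that $S$ is a morphism of quasitriangular quasi-Hopf algebras and therefore must carry the $R$-matrix of $H^{\op,\coop}$ (which is $R$ itself) to that of $H_f$, the argument is circular, since preservation of $R$ is precisely the assertion to be proved. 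The cited Lemma 2.3 of \cite{BuN} establishes $(S\otimes S)(R)=f_{21}Rf^{-1}$ by a direct computation using the explicit formula for $f$ together with the quasitriangularity axioms ($(\Delta\otimes\mathrm{id})(R)$, $(\mathrm{id}\otimes\Delta)(R)$ expressed via $\Phi$, and $R\Delta(h)=\Delta^{\op}(h)R$); some such explicit verification is unavoidable, and your proposal as written does not supply it.
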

\noindent Combining all these facts we see that if $S$ is an isomorphism, then $H^{\op,\coop}$ will be a quasitriangularizable quasi-Hopf algebra with $R$-matrix given by $R_H$. Combining all of these observations, we obtain the following lemma.

\begin{lemma} \label{qhequiv}

$D^{\overline{\alpha}_{\rev}}
_{\overline{\theta},\overline{\sigma}}
(K^{\op},A)$ as a quasitriangular quasi-Hopf algebra is isomorphic to $H(\alpha,\gamma,\mu,c)^{\op,\coop}$ with  antipode structure twisted by $\beta$.   
\end{lemma}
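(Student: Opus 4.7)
The plan is to exhibit an explicit linear bijection $\Phi\colon D^{\overline{\alpha}_{\rev}}_{\overline{\theta},\overline{\sigma}}(K^{\op},A)\to H(\alpha,\gamma,\mu,c)^{\op,\coop}$ on the common underlying vector space $\mathbb{C}^K\otimes\mathbb{C}[G]$, sending the basis vector $\delta_x\otimes\overline{g}$ to $\delta_x\otimes g$, and verify each piece of quasi-Hopf structure separately, absorbing the remaining antipode discrepancy into the gauge freedom of the $\beta$-element at the very end.

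For the algebra and coalgebra compatibility, I would compute directly from the definitions. Applying op to the Naidu product gives $(\delta_x g)\cdot_{\op}(\delta_y h)=(\delta_y h)(\delta_x g)=\delta_{y,{}^g x}\,\gamma_{h,g}(x)^{-1}\,\delta_x(hg)$, which matches the cleft product $\delta_{x\triangleleft g, y}\,\overline{\theta}_x(g,h)\,\delta_x(g\cdot_{\op} h)$ once one uses the identifications $\overline{\theta}_x(g,h)=\gamma_{h,g}(x)^{-1}$, $\overline{\sigma}_g(a,b)=\mu_g(b,a)$, and $y\triangleleft x=\partial(x)y\partial(x)^{-1}$ recorded in the proof of Lemma \ref{Cleft_Object}. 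Applying coop to the Naidu coproduct swaps tensor factors, yielding $\sum_{ab=x}\mu_g(a,b)(\delta_b g)\otimes(\delta_a g)$, which after the reindexing $a\leftrightarrow b$ intrinsic to $K^{\op}$ matches the cleft coproduct $\sum_{ab=x}\overline{\sigma}_g(a,b)(\delta_a g)\otimes(\delta_b g)$. The associators agree because $\overline{\alpha}_{\rev}(x,y,z)=\overline{\alpha(z,y,x)}$ is exactly what op,coop applied to $\Phi_{\mathrm{Naidu}}=\sum\alpha(x,y,z)(\delta_x\Id)^{\otimes 3}$ produces (complex conjugation comes from inverting the coefficient, and reversal of arguments from coop).

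For the R-matrix, equation \eqref{eqn:R_matrix_quantum_cleft} identifies the cleft R-matrix $R_{\mathrm{cleft}}=\sum c(x,y)(\delta_x\Id_K)\otimes(\delta_y\,\partial(x))$ with the same formula as the Naidu R-matrix. The cited Bulacu--N\u{a}st\u{a}sescu identity $(S\otimes S)(R)=R_f$ shows that $H^{\op,\coop}$ inherits a quasitriangular structure whose R-matrix, after the explicit gauge twist $f$, is again $R$; under $\Phi$ this matches $R_{\mathrm{cleft}}$ because the quotient of Lemma \ref{induced_quasi_quotient_lemma} was constructed precisely by pushing forward the Drinfeld double R-matrix, an operation compatible with op,coop. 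For the antipode, the two algebras carry slightly different $\beta$-elements, namely $\beta_{\mathrm{Naidu}}=\sum_x\omega(x^{-1},x,x^{-1})\delta_x\Id$ and $\beta_{\mathrm{cleft}}=\sum_x\overline{\alpha}_{\rev}(x,x^{-1},x)\delta_x\Id$. These are reconciled by the standard QHA gauge freedom in the antipode triple: replacing $(\alpha,\beta)$ by $(U^{-1}\alpha S(U), S^{-1}(U)\beta U)$ for an appropriate invertible $U\in H$ changes one $\beta$-element into the other, and this is precisely the twist named in the statement.

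The main obstacle I expect is reconciling the two a priori distinct twists appearing in the antipode step: the abstract gauge transformation $f$ supplied by Bulacu--N\u{a}st\u{a}sescu (which turns $H^{\op,\coop}$ into $H_f$) and the concrete $\beta$-twist named in the lemma. The verification that these accomplish the same conversion is delicate because in the quasi-Hopf setting the associator $\Phi$ actively participates in the antipode axioms, so one must track how $\Phi$ and $\Phi^{-1}$ interact with $S$ under op,coop. Once this bookkeeping is carried out—essentially noting that both twists serve exactly to align the Naidu conventions with those of Mason--Ng—the resulting $\Phi$ is the desired isomorphism of quasitriangular quasi-Hopf algebras.
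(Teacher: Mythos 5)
Your overall strategy --- take the identity map on the common basis $\{\delta_x\otimes g\}$ of $\mathbb{C}^K\otimes\mathbb{C}[G]$ and check the product, coproduct, associator, $R$-matrix and antipode data one by one --- is exactly the paper's, and your verifications of the product, coproduct and associator (and the identification of the two $R$-matrices via the formulas \eqref{eqn:R_matrix_drinfeld_double}--\eqref{eqn:R_matrix_quantum_cleft} and the Bulacu--Nauwelaerts facts) go through as in the text.

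The gap is in your last step. You treat the mismatch between $\beta_{\mathrm{Naidu}}=\sum_x\alpha(x^{-1},x,x^{-1})\,\delta_x\Id$ and $\beta_{\mathrm{cleft}}=\sum_x\overline{\alpha}_{\rev}(x,x^{-1},x)\,\delta_x\Id$ as something to be ``reconciled by the standard QHA gauge freedom,'' and you flag the comparison of that gauge transformation with the $\beta$-twist in the statement as an unresolved obstacle. This conflates two different issues. The twist by $\beta$ named in the lemma is needed for a structural reason you do not mention: passing to $H^{\op,\coop}$ interchanges the roles of the two distinguished elements in the antipode triple, so $H(\alpha,\gamma,\mu,c)^{\op,\coop}$ carries $(S,\beta,1)$ while the quantum cleft extension carries $(S,1,\beta')$; the twist converts one normalization into the other. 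What then remains is \emph{not} a gauge-freedom argument but a scalar identity: one must show $\overline{\alpha}_{\rev}(x,x^{-1},x)=\alpha(x^{-1},x,x^{-1})$ on the nose. This follows from evaluating the normalized cocycle condition $d\alpha(x^{-1},x,x^{-1},x)=1$, which gives $\alpha(x,x^{-1},x)\,\alpha(x^{-1},x,x^{-1})=1$, i.e.\ the two coefficient functions coincide. Invoking a further gauge transformation $(\alpha,\beta)\mapsto(U^{-1}\alpha S(U),S^{-1}(U)\beta U)$ here would be the wrong mechanism --- it would simultaneously alter $S$ and the $\alpha$-element and so disturb the matches you have already established --- and in any case is unnecessary. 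Supplying the one-line cocycle computation closes the proof.
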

\begin{proof}
It is clear that the multiplication and comultiplication coincide. Note that the antipode structure of $H(\alpha,\gamma,\mu,c)^{\op,\coop}$ is given by $(S,\beta,1)$, which is why we twist by $\beta$. Furthermore, the associator is given by $\alpha_{\rev}$, but because of our choice of 3-cocycle for $D^{\overline{\alpha}_{\rev}}
_{\overline{\theta},\overline{\sigma}}(K^{\op},A)$ they are the same. Lastly, there is a difference between the $\beta$ of a quantum cleft extension and a Naidu quasi-Hopf algebra, namely  the former has coefficients given by $\overline{\alpha(x,x^{-1},x)}$ and the latter by $\alpha(x^{-1},x,x^{-1})$. But using the fact that $d\alpha(x^{-1},x,x^{-1},x)=1$ we see that these are the same.
\end{proof}

\begin{corollary}
$\mathbf{Rep}\,\, D_{\overline{\theta},\overline{\sigma}}^{\overline{\alpha}_{\rev}}(K^{\op},A)$ is equivalent as a braided fusion category to $\mathbf{Rep}\ H(\alpha,\gamma,\mu,c)$.
\label{the_final_result_of_swapping}
\end{corollary}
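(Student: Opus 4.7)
The plan is to deduce this directly from Lemma \ref{qhequiv}. That lemma already produces an isomorphism of quasitriangular quasi-Hopf algebras between $D_{\overline{\theta},\overline{\sigma}}^{\overline{\alpha}_{\rev}}(K^{\op},A)$ and $H(\alpha,\gamma,\mu,c)^{\op,\coop}$ (with its antipode twisted by $\beta$), so what remains is to convert an isomorphism of quasitriangular quasi-Hopf algebras into an equivalence of braided fusion categories and then to remove the $(-)^{\op,\coop}$.

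First I would observe that the twist of the antipode structure by $\beta$ is irrelevant for the purpose of comparing representation categories as braided fusion categories: the antipode and its associated elements $\alpha,\beta$ only enter into the construction of duals, and any two quasi-Hopf algebras that agree as quasi-bialgebras have braided tensor equivalent representation categories (with the rigid structure transported along the equivalence). Thus at the level of braided fusion categories we may replace $H(\alpha,\gamma,\mu,c)^{\op,\coop}$ with its standard antipode.

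Next I would invoke the fact cited from \cite{bulacu_qha,BuN}: for a finite-dimensional quasi-Hopf algebra $H$, the antipode $S$ gives an isomorphism $H^{\op,\coop}\to H_f$ of quasi-Hopf algebras for some twist $f\in H\otimes H$, and this isomorphism is compatible with the quasitriangular structures in the sense that $(S\otimes S)(R)=R_f$. Composing with the isomorphism from Lemma \ref{qhequiv} gives an isomorphism
\[
D_{\overline{\theta},\overline{\sigma}}^{\overline{\alpha}_{\rev}}(K^{\op},A)\;\cong\;H(\alpha,\gamma,\mu,c)_f
\]
of quasitriangular quasi-Hopf algebras.

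Finally, I would use the standard fact that gauge-equivalent quasi-Hopf algebras have braided tensor equivalent representation categories: the equivalence $\mathbf{Rep}\,H_f\to\mathbf{Rep}\,H$ is the identity on underlying vector spaces and morphisms, with associator and braiding adjusted by $f$ and $R_f=F_{21}RF^{-1}$ respectively. Combining these steps yields the desired braided tensor equivalence $\mathbf{Rep}\,\,D_{\overline{\theta},\overline{\sigma}}^{\overline{\alpha}_{\rev}}(K^{\op},A)\cong\mathbf{Rep}\,\,H(\alpha,\gamma,\mu,c)$. The only real bookkeeping concern is verifying that the twist of the antipode by $\beta$ is genuinely cosmetic, which is immediate from the coincidence $\overline{\alpha(x,x^{-1},x)}=\alpha(x^{-1},x,x^{-1})$ already established in the proof of Lemma \ref{qhequiv}.
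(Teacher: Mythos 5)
Your proposal is correct and follows exactly the route the paper intends: the corollary is stated without a separate proof precisely because it is immediate from Lemma \ref{qhequiv} together with the two preceding Facts (the gauge-twist behaviour of $R$-matrices and the isomorphism $S:H^{\op,\coop}\to H_f$), plus the standard invariance of $\mathbf{Rep}$ under gauge twists. Your additional remark that the $\beta$-twist of the antipode is invisible at the level of braided fusion categories is a correct and worthwhile clarification of a point the paper leaves implicit.
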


As $G^{\op}$ is a quotient of $K^{\op}$ by an abelian group we see that $f\,{:}\,K^{\op}\rightarrow K, f(g)=g^{-1}$ induces an isomorphism of the pairs $(K^{\op},G^{\op})$ and $(K,G)$. We therefore, have a quasi-abelian $3$-cocycle induced by $f$, which we denote by $(\alpha,\gamma,\mu,c)^f$. Denote $\omega:=\overline{\alpha}_{\rev}^f$
\begin{lemma}
$(\alpha,\gamma,\mu,c)^f$ induces a cleft object of $\C^{K}_{\omega}$ $(G,\overline{\theta}^f,\overline{\sigma}^f)$, such that $D^{\omega}_{\overline{\theta}^f,\overline{\sigma}^f}(K,A)$ is a quasi-Hopf algebra quotient of $D^{\omega}(K)$. Furthermore, $D^{\omega}_{\overline{\theta}^f,\overline{\sigma}^f}(K,A)$ is isomorphic as a quasitriangular quasi-Hopf algebra to $D_{\overline{\theta},\overline{\sigma}}^{\overline{\alpha}_{\rev}}(K^{\op},A)$.
\label{switcharoo}
\end{lemma}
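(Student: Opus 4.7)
The plan is to use the group isomorphism $f : K^{\op} \to K$, $f(g) = g^{-1}$, to transport all data from Lemmas \ref{Cleft_Object} and \ref{induced_quasi_quotient_lemma} back to $K$, then construct an explicit isomorphism between the resulting algebras. The key point is that $f$ is a group isomorphism (not merely a set map) between opposite groups, so every cochain identity transports functorially under pullback; in particular, $f$ intertwines the right $G^{\op}$-action on $K^{\op}$ used by the Mason--Ng cleft object with the conjugation action of $G$ on $K$.

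First I would verify the cleft-object assertion by applying Lemma \ref{Cleft_Object} to the pulled-back data $(\alpha,\gamma,\mu,c)^f$: the defining conditions \eqref{eqn:cleft_condition_1}--\eqref{eqn:cleft_condition_3} are preserved under group isomorphisms with the appropriate relabeling of the action, so one obtains a cleft object $(G, \overline{\theta}^f, \overline{\sigma}^f)$ of $\mathbb{C}^K_\omega$ without any new calculation. For the quasi-Hopf quotient claim I would then apply Lemma \ref{induced_quasi_quotient_lemma} to this transported cleft object: the same formula $\pi(\delta_x g) = c^f(g,x)\,\delta_x \overline{g}$ defines a surjective quasi-bialgebra morphism from $D^\omega(K)$ onto $D^\omega_{\overline{\theta}^f,\overline{\sigma}^f}(K,A)$, and the commutative diagram together with the quasi-bialgebra axioms follows from the pulled-back versions of \eqref{eqn:qa3c_braiding_1}--\eqref{eqn:qa3c_braiding_2}.

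For the isomorphism of quasitriangular quasi-Hopf algebras, the natural candidate is the map
\[
\Psi : D^\omega_{\overline{\theta}^f,\overline{\sigma}^f}(K,A) \longrightarrow D^{\overline{\alpha}_{\rev}}_{\overline{\theta},\overline{\sigma}}(K^{\op},A), \qquad \delta_x \otimes g \longmapsto \delta_{x^{-1}} \otimes g,
\]
possibly corrected by a normalized $1$-cochain absorbing any antipode discrepancy. Compatibility with product, coproduct, and associator is essentially tautological once one notes that $\omega = \overline{\alpha}_{\rev}^f$ by definition and that $f$ exchanges the two conjugation actions. Compatibility of the $R$-matrices in \eqref{eqn:R_matrix_quantum_cleft} is automatic because $c^f$ was defined precisely by transporting $c$ along $f$.

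The main technical obstacle will be matching the antipode data, since the element $\beta = \sum_x \omega(x,x^{-1},x)\,\delta_x \mathrm{Id}$ (versus $\sum_x \overline{\alpha_{\rev}(x,x^{-1},x)}\,\delta_x\mathrm{Id}$ on the other side) transforms nontrivially under $x \mapsto x^{-1}$ and under the bar/reverse operations. The normalized $3$-cocycle identity $(d\omega)(x^{-1},x,x^{-1},x) = 1$, already used in Lemma \ref{qhequiv}, gives exactly the relation needed to reconcile the two $\beta$'s, so the antipode compatibility reduces to a gauge twist canonically determined by $f$; incorporating this correction in the definition of $\Psi$ should close the proof.
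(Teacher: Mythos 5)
The paper offers no argument here (it declares the proof routine and leaves it to the reader), and your overall strategy --- transporting the cleft object and its quantum cleft extension along the group isomorphism $f\colon K^{\op}\to K$, $f(g)=g^{-1}$ --- is precisely the intended one. The first two claims do follow formally as you say: $f$ is an isomorphism of crossed-module data, so the conditions of Definition \ref{cleft_object} and the commutative diagram of Lemma \ref{induced_quasi_quotient_lemma} transport by pullback with no new computation.

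However, your candidate isomorphism $\Psi(\delta_x\otimes g)=\delta_{x^{-1}}\otimes g$ fails whenever $G$ is nonabelian, and the failure cannot be absorbed by a normalized $1$-cochain. The target $D^{\overline{\alpha}_{\rev}}_{\overline{\theta},\overline{\sigma}}(K^{\op},A)$ is built on $\mathbb{C}^{K^{\op}}\otimes_{\mathbb{C}}\mathbb{C}[G^{\op}]$, so its product outputs the group element $g\cdot_{\op}h=hg$, while the source outputs $gh$; a map fixing the group-algebra factor is therefore not an algebra homomorphism, and rescaling basis vectors by scalars $\lambda(x,g)$ cannot repair a reversed group law. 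The correct transport isomorphism inverts on both tensor factors, $\delta_x\otimes g\mapsto\delta_{x^{-1}}\otimes g^{-1}$, i.e.\ $\delta_x\mapsto\delta_{f^{-1}(x)}$ combined with the canonical isomorphism $\mathbb{C}[G]\cong\mathbb{C}[G^{\op}]$, $g\mapsto g^{-1}$. This matters because the nonabelian case is exactly the one the paper cares about (see the end of Section 4.5). With the corrected map your antipode worry also disappears: since $\omega=\overline{\alpha}_{\rev}^f$ means $\omega(a,b,c)=\overline{\alpha}_{\rev}(a^{-1},b^{-1},c^{-1})$, one gets $\omega(y^{-1},y,y^{-1})=\overline{\alpha}_{\rev}(y,y^{-1},y)$ on the nose, so the two $\beta$'s correspond exactly under $\Psi$ without any gauge twist; the $\beta$-discrepancy you import from Lemma \ref{qhequiv} concerns the Naidu versus Mason--Ng conventions and does not arise here, where both sides are quantum cleft extensions.
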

\begin{proof}
This proof is routine and left to the reader.
\end{proof}

\begin{theorem}
Let $\mathbf{D}$ be a braided fusion category, containing $\mathbf{Rep}\,\, G$ as a symmetric fusion subcategory. If the de-equivariantization $\mathbf{D}_G$ is pointed and faithfully graded, then let $K$ be the group of isomorphism classes of simple objects, and $A$ the simple objects of the identity component. Suppose that $G$ fixes the isomorphism classes $A$. Then there  exists a quantum cleft extension $D^{\omega}_{\overline{\theta}^f,\overline{\sigma}^f}(K,A)$ such that $\mathbf{D}\cong \mathbf{Rep}\,\, D^{\omega}_{\overline{\theta}^f,\overline{\sigma}^f}(K,A)$ as a braided fusion category. 
\label{main_cherry}
\end{theorem}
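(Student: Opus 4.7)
The plan is threefold: first, identify $\mathbf{D}$ as the $G$-equivariantization of a pointed braided $G$-crossed category; second, realize this equivariantization via a Naidu quasi-Hopf algebra; and third, translate from the Naidu picture to a quantum cleft extension using Lemmas \ref{qhequiv} and \ref{switcharoo}.

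Step 1 is essentially bookkeeping. Since $\mathbf{Rep}\,G \hookrightarrow \mathbf{D}$ is symmetric, the regular-representation algebra $B_G$ is an \'etale algebra in $\mathbf{D}$, and $\mathbf{D}_G := \mathbf{Rep}_{\mathbf{D}}\,B_G$ carries a faithful braided $G$-crossed structure with $\mathbf{D} \cong (\mathbf{D}_G)^G$ (standard (de-)equivariantization). By hypothesis $\mathbf{D}_G$ is pointed, faithfully $G$-graded, with simples the group $K$ and trivial component $A$; hence $\mathbf{D}_G \cong \mathbf{Vec}_K^{\tilde{\omega}}$ for some $\tilde{\omega} \in Z^3(K,\mathbb{C}^{\times})$, and the grading $\partial\colon K \to G$ realizes $K$ as an abelian extension $0 \to A \to K \to G \to 1$. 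The assumption that $G$ fixes the simples of $A$ says the module-map on $A$ is trivial, so Theorem \ref{stable} upgrades this to a central extension.

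For Step 2, I would read off from $\mathbf{D}_G$ a quasi-abelian 3-cocycle $(\alpha,\gamma,\mu,c)$ on the crossed module $(G,K,\partial)$ in the sense of Definition \ref{qa3c_definition}: $\alpha = \tilde{\omega}$ is the associator, $\gamma$ and $\mu$ encode the $G$-action (conjugation by a lift $\tilde{g}\in K$ of $g\in G$) and its coherence against $\tilde{\omega}$, and $c$ encodes the $G$-crossed braiding restricted to the pointed simples. The compatibility equations of Definition \ref{qa3c_definition} translate directly the hexagon and heptagon axioms of the braided $G$-crossed category. Naidu's main theorem in \cite{Naidu} then supplies a braided tensor equivalence
\begin{equation*}
\mathbf{D} \;\cong\; (\mathbf{Vec}_K^{\tilde{\omega}})^G \;\cong\; \mathbf{Rep}\, H(\alpha,\gamma,\mu,c).
\end{equation*}

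For Step 3, Lemma \ref{qhequiv} identifies $\mathbf{Rep}\, H(\alpha,\gamma,\mu,c)$ with $\mathbf{Rep}\, D^{\overline{\alpha}_{\rev}}_{\overline{\theta},\overline{\sigma}}(K^{\op},A)$ as braided fusion categories (the antipode twist is invisible at the level of representations), and Lemma \ref{switcharoo} transports along the inversion $f\colon K^{\op}\to K$ to yield the desired equivalence with $\mathbf{Rep}\, D^{\omega}_{\overline{\theta}^f,\overline{\sigma}^f}(K,A)$ for $\omega = \overline{\alpha}_{\rev}^f$. The main obstacle is Step 2: one must verify that the coherence data of the abstract braided $G$-crossed structure on $\mathbf{Vec}_K^{\tilde{\omega}}$ really does package into a quasi-abelian 3-cocycle in Naidu's precise sense, and either quote or adapt his equivariantization theorem. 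Centrality of $K$ (from Theorem \ref{stable}) ensures all of Naidu's hypotheses are met, so what remains is coherence bookkeeping; Steps 1 and 3 are formal, relying respectively on standard (de-)equivariantization theory and the appendix's quasi-Hopf algebra comparison.
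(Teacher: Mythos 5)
Your proposal is correct and follows essentially the same route as the paper: cite Naidu's equivariantization theorem to produce the quasi-abelian $3$-cocycle with $\mathbf{D}\cong(\mathbf{D}_G)^G\cong\mathbf{Rep}\,H(\alpha,\gamma,\mu,c)$, then pass to the quantum cleft extension via Lemma \ref{switcharoo} and Corollary \ref{the_final_result_of_swapping}. The paper's proof is just a terser version of your three steps, quoting Naidu directly for the packaging you flag as the main obstacle in Step 2.
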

\begin{proof}
By \cite{Naidu} we know that there  exists a quasi-abelian $3$-cocycle $(\alpha,\gamma,\mu,c)$ such that $(\mathbf{D}_G)^G\cong \mathrm{\textbf{Rep}}\,\, H(\alpha,\gamma,\mu,c)$ as braided fusion categories. By Lemma \ref{switcharoo} we know there exists a quantum cleft extension  $D^{\omega}_{\overline{\theta}^f,\overline{\sigma}^f}(K,A)$ induced from the quasi-abelian $3$-cocycle. By Corollary \ref{the_final_result_of_swapping}, we see that $\mathbf{D}\cong (\mathbf{D}_G)^G\cong \mathrm{\textbf{Rep}}\,\, D^{\omega}_{\overline{\theta}^f,\overline{\sigma}^f}(K,A)$.
\end{proof}

\subsection{Classification and construction of the  $D^{\omega}(K,A)$ modules}

 In this subsection we use \cite{DPR} to obtain a construction of the irreducible $D^{\omega}(K,A)$ modules.

To describe the isomorphism classes of $D^{\omega}(K)$-modules, we use the set-up from \cite{DPR}.
Given a representation $\rho:D^{\omega}(K)\to \mathrm{End}_{\C}(V)$, define a $K$-grading on $V$ through $V_x:=(\delta_x\otimes id)V$, and a twisted $K$-action through $g\cdot v:=(id\otimes g)v$.  Label the conjugacy classes  of $K$ by $\{C_b\}_{b=1}^r$, and for each $C_b$ choose an element $g_1^b$. Furthermore, for each $b$ choose a set of representatives of $G/C_K(g_1^b)$, where $C_K(g)$ denotes the centralizer, and denote the representatives by $\{x_1^b,\ldots, x_m^b\}$ such that $x_1^b=id$. We then have $C_b=\{g_1^b=x_1^bg_1^b(x_1^b)^{-1},\ldots, g_r^b=x_m^bg_1^b(x_m^b)^{-1}\}$. There is a bijection between the isomorphism classes of irreducible $ D^{\omega}(K)$-modules and irreducible $\mathbb{C}_{\theta_{g_1^b}}[C_K(g_1^b)]$-modules for each $1\leq b\leq r$. Suppose $\rho:\mathbb{C}_{\theta_{g_1^b}}[C_K(g_1^b)]\rightarrow \mathrm{End}_{\C}(V)$ is  irreducible. The corresponding irreducible $D^{\omega}(K)$-module was explicitly constructed in \cite{DPR}, which we recall now.  

Let $\mathcal{B}_b$ be the subalgebra of $D^{\omega}(K)$ which is spanned by the elements $\{\delta_x a:(x,a)\in K\times C_K(g_1^b)\}$. We may define a $\mathcal{B}_b$-representation $\rho_V:\mathcal{B}_b\rightarrow \mathrm{End}_{\C}(V)$ by $\rho_V(\delta_x\otimes a)(v):=\delta_{x,g_1^A}\rho(a)(v)$. One can then take the induced $D^{\omega}(K)$-module, $D^{\omega}(K)\otimes_{\mathcal{B}_b}V$. In \cite{DPR} Dijkgraaf et. al gave an explicit formula for the $D^{\omega}(K)$-action on this induced module which we now explain. First, choose a basis $\{v_t\}_{t=1}^n$ for $V$, and note that a basis of $D^{\omega}(K)\otimes_{\mathcal{B}_b} V$ will be given by $\{(\delta_{\Id_K} x_t^b)\otimes v_j: 1\leq t\leq m, 1\leq j\leq n\}$. The action is then defined on this basis by:
\begin{equation}\rho_V(\delta_x g)((\delta_{\Id_K}x_j^b)\otimes v_{i}):=\theta_x(g,x_j^b)\theta_x(x_k^b,h)^{-1}\delta_{g^{-1}xg,g_j^b}((\delta_{\Id_K}x_k^b)\otimes (\rho(h)(v_i)))\label{eqn:basis_action}\end{equation}
where $x_k,h$ are such that $gx_j^b=x_k^b h$, and $h\in C_K(g_1^b)$.

Now assume that we have a quantum cleft extension such that $D^{\omega}(K,A)$ is a quasi-Hopf algebra quotient with quotient map $\pi:D^{\omega}(K)\rightarrow D^{\omega}(K,A)$ and its modules form a MTC. Since $\mathrm{\textbf{Rep}}\,\, D^{\omega}(K,A)$ is a full fusion subcategory of $\mathrm{\textbf{Rep}}\,\, D^{\omega}(K,A)$, to determine the irreducible objects of $\mathrm{\textbf{Rep}}\,\, D^{\omega}(K,A)$ it suffices to determine which simple objects of $\mathrm{\textbf{Rep}}\,\, D^{\omega}(K)$ are in the fusion subcategory. More precisely, a representation $\rho:D^{\omega}(K)\rightarrow \mathrm{End}_{\C}V$ is in $\mathrm{\textbf{Rep}}\,\, D^{\omega}(K,A)$ iff $\ker(\pi)\subset \ker(\rho)$. For that reason we calculate $\ker(\pi)$. As in the previous subsection, let the quotient map $\pi$ be given by $\pi(\delta_xg)=c(g,x)\delta_x\overline{g}$

\begin{lemma}
Let $\partial:K\rightarrow G$ be as in Lemma \ref{induced_quasi_quotient_lemma}, and $B_{x,h}:=\mathrm{span}_{\C}\{c(g,x)^{-1}\delta_x g-c(k,x)^{-1}\delta_x k: g,k\in \partial^{-1}(h)\}$. Then
\begin{equation}\ker(\pi)=\bigoplus_{(g,h)\in K\times G}B_{g,h}\end{equation}
\label{kernel_description}
\end{lemma}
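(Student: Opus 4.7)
The plan is to verify the claimed equality by checking both inclusions and exploiting the fact that $\pi$ respects the obvious bigrading on $D^\omega(K)$ indexed by pairs $(x,h)\in K\times G$.

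First I would establish the easy inclusion $\bigoplus_{(g,h)}B_{g,h}\subseteq\ker(\pi)$ by a direct computation on generators: for any $g,k\in\partial^{-1}(h)$ we have $\overline{g}=\overline{k}=h$, so
\[
\pi\bigl(c(g,x)^{-1}\delta_x g-c(k,x)^{-1}\delta_x k\bigr)=\delta_x\overline{g}-\delta_x\overline{k}=0.
\]

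For the reverse inclusion, I would begin by noting that $\pi$ is homogeneous with respect to the decompositions
\[
D^\omega(K)=\bigoplus_{x\in K,\,h\in G}V_{x,h},\qquad V_{x,h}:=\mathrm{span}_{\C}\{\delta_x g : g\in\partial^{-1}(h)\},
\]
\[
D^\omega(K,A)=\bigoplus_{x\in K,\,h\in G}\C\,\delta_x\overline{h},
\]
since $\pi(V_{x,h})\subseteq\C\,\delta_x\overline{h}$. Hence $\ker(\pi)=\bigoplus_{x,h}\bigl(\ker(\pi)\cap V_{x,h}\bigr)$, and it suffices to show that $\ker(\pi)\cap V_{x,h}=B_{x,h}$ for each $(x,h)$.

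Fix $(x,h)$ and an element $v=\sum_{g\in\partial^{-1}(h)}a_g\,\delta_x g\in V_{x,h}$. Then $\pi(v)=\bigl(\sum_g a_g c(g,x)\bigr)\delta_x\overline{h}$, so $v\in\ker(\pi)$ iff $\sum_g a_g c(g,x)=0$. Rewriting via $b_g:=a_gc(g,x)$ gives $v=\sum_g b_g c(g,x)^{-1}\delta_x g$ with $\sum_g b_g=0$, and fixing any reference element $k_0\in\partial^{-1}(h)$,
\[
v=\sum_{g}b_g\bigl(c(g,x)^{-1}\delta_x g-c(k_0,x)^{-1}\delta_x k_0\bigr)+\Bigl(\sum_g b_g\Bigr)c(k_0,x)^{-1}\delta_x k_0\in B_{x,h},
\]
the last term vanishing. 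This shows $\ker(\pi)\cap V_{x,h}\subseteq B_{x,h}$, completing the proof.

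There is no substantive obstacle; the content is essentially linear-algebraic, and the only thing to be careful about is bookkeeping the bigrading so that the direct sum decomposition is visibly compatible with $\pi$. Nothing deeper than the formula $\pi(\delta_x g)=c(g,x)\delta_x\overline{g}$ from Lemma~\ref{induced_quasi_quotient_lemma} is needed.
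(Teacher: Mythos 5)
Your proposal is correct and follows essentially the same route as the paper: decompose by the $(x,h)$-bigrading, observe that membership in $\ker(\pi)$ in each graded piece is the single linear condition $\sum_{g\in\partial^{-1}(h)}a_g\,c(g,x)=0$, and identify that solution space with the span of the differences $c(g,x)^{-1}\delta_x g-c(k,x)^{-1}\delta_x k$. The paper compresses the last step into ``by basic algebra one sees,'' whereas you spell it out with the substitution $b_g=a_gc(g,x)$ and a reference element $k_0$; the content is identical.
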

\begin{proof}
To see this note that if we have choose a set of representatives for $G$, say $\{y_1,\ldots, y_k\}$, then $\{\delta_x\otimes \overline{y}_i: x\in K, 1\leq i\leq k\}$ is a basis of $D^{\omega}(K,A)$. If $v=\sum_{(x,k)\in K^2}a_{x,k}(\delta_{x} k)$, then:
\[\pi(v)=\sum_{(x,h)\in K\times G}\sum_{k\in \partial^{-1}(h)}c(k,x)a_{x,k}\delta_x h=0\Leftrightarrow \sum_{k\in \partial^{-1}(h)}c(k,x)a_{x,k}=0 \ \ \ \ \forall (x,h)\in K\times G\] 
Now fix a pair $(x,h)\in K\times G$, and let $\partial^{-1}(h)=\{h_1,\ldots, h_r\}$. By basic algebra one sees that: 
\[\{\sum_{i=1}^ra_{x,h_i}\delta_x h_i: \sum_{i=1}^rc(h_i,x)a_{x,h_i}=0 \}=\mathrm{span}_{\C}\{c(h_i,x)^{-1}\delta_xh_i-c(h_j,x)^{-1}\delta_x h_j:1\leq i,j\leq r\}=B_{x,h}\] 
\end{proof}

\begin{theorem}
Let $\{g_1^b\}_{b=1}^r$ be a set of conjugacy class representatives for $K$. Suppose that $\rho_V:\C_{\theta_{g_1^b}}[C_K(g_1^b)]\rightarrow \mathrm{End}_{\C}(V)$ is an irreducible representation. Then \begin{equation}(g_1^b,\rho_V)\in \mathbf{Rep}\,\, D^{\omega}(K,A)\text{  if and only if }\rho(a)=c(a,g_1^A)\Id_V \ \ \ \forall a\in A=\ker(\partial)\end{equation}
\end{theorem}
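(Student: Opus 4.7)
The plan is to exploit the kernel description in Lemma \ref{kernel_description} together with the explicit action formula \eqref{eqn:basis_action} for the induced $D^\omega(K)$-module. A representation $(g_1^b,\rho_V)$ factors through $\pi\,{:}\,D^\omega(K)\to D^\omega(K,A)$ precisely when $\ker(\pi)\subseteq\ker(\rho_V)$, and by Lemma \ref{kernel_description} this is equivalent to the identity
\begin{equation}
\rho_V(\delta_x(ag_0))=\frac{c(ag_0,x)}{c(g_0,x)}\,\rho_V(\delta_x g_0)
\qquad\forall x\in K,\ g_0\in K,\ a\in A.
\label{eq:fullcrit}
\end{equation}
First I would reduce \eqref{eq:fullcrit} to its $g_0=\Id$ specialization. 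Since $A$ is central in $K$, the multiplication of $D^\omega(K)$ gives $(\delta_x a)(\delta_x g_0)=\theta_x(a,g_0)\,\delta_x(ag_0)$, while identity \eqref{eqn:qa3c_braiding_1} applied with $(x,y,z)=(a,g_0,x)$ (noting that $\partial(a)=\Id_G$ kills the $\gamma$-factor, and $(ag_0)x(ag_0)^{-1}=g_0xg_0^{-1}$) yields the cocycle relation
\[
c(ag_0,x)=\theta_x(a,g_0)^{-1}\,c(a,x)\,c(g_0,x).
\]
Combining these two facts shows \eqref{eq:fullcrit} for general $g_0$ is equivalent to the single condition
\begin{equation}
\rho_V(\delta_x a)=c(a,x)\,\rho_V(\delta_x\Id)\qquad\forall x\in K,\ a\in A.
\label{eq:redcrit}
\end{equation}

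Next I would evaluate \eqref{eq:redcrit} on the basis $\{(\delta_\Id x_j^b)\otimes v_i\}$. Centrality of $a$ gives $a^{-1}xa=x$ and $ax_j^b=x_j^b a$, so in the decomposition $a\,x_j^b=x_k^b h$ of \eqref{eqn:basis_action} one has $x_k^b=x_j^b$ and $h=a$; the formula then reads
\[
\rho_V(\delta_x a)\bigl((\delta_\Id x_j^b)\otimes v_i\bigr)
=\delta_{x,g_j^b}\,\theta_x(a,x_j^b)\,\theta_x(x_j^b,a)^{-1}(\delta_\Id x_j^b)\otimes\rho(a)v_i,
\]
whereas $\rho_V(\delta_x\Id)$ projects onto the $x=g_j^b$ component with trivial coefficient. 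Imposing \eqref{eq:redcrit} therefore forces, for each $j$,
\[
\rho(a)=\Lambda_j(a)\,\Id_V,\qquad
\Lambda_j(a):=\frac{c(a,g_j^b)\,\theta_{g_j^b}(x_j^b,a)}{\theta_{g_j^b}(a,x_j^b)}.
\]

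The main obstacle is then purely cohomological: one must show that $\Lambda_j(a)$ is independent of $j$ and equals $c(a,g_1^b)$. This $j$-independence expresses the compatibility of $c(a,-)$ with the conjugation $g_1^b\mapsto x_j^b g_1^b(x_j^b)^{-1}=g_j^b$, and should follow from identity \eqref{eqn:qa3c_braiding_1} applied to the factorization $x_j^b g_1^b=g_j^b x_j^b$, combined with identity \eqref{eqn:qa3c_braiding_2} and the explicit expression $\theta_x(g,h)=\omega(x,g,h)\omega(g,h,(gh)^{-1}x(gh))/\omega(g,g^{-1}xg,h)$ (using $\partial(a)=\Id_G$ throughout to trivialise the $\gamma$ and $\mu$ factors). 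The index $j=1$ gives $x_1^b=\Id$ and hence $\Lambda_1(a)=c(a,g_1^b)$ by normalisation of $\theta$, establishing the stated criterion $\rho(a)=c(a,g_1^b)\Id_V$; conversely, once this scalar equation holds, the identities above run backwards to recover \eqref{eq:redcrit} and hence \eqref{eq:fullcrit}, completing the theorem.
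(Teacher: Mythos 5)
Your necessity argument tracks the paper's: both use the kernel description of $\pi$ (Lemma \ref{kernel_description}) together with the explicit basis action \eqref{eqn:basis_action} to force $\rho(a)$ to be the scalar $c(a,g_1^b)$, the paper doing this only for the component of the kernel supported at $x=g_1^b$ and at the coset representative $x_1^b=\Id_K$, where the $\theta$-factors cancel by normalization. The real divergence is in the converse. The paper does not verify the kernel containment directly; instead it sets up a bijection between irreducible $\theta_{g_1^b}$-projective representations of $C_K(g_1^b)$ satisfying the scalar condition and irreducible $\overline{\theta}_{g_1^b}$-projective representations of $C_G(\overline{g}_1^b)$, and then runs a counting argument: comparing $\dim_{\C}D^{\omega}(K,A)=|K|\cdot|G|$ with $\sum_{b}\frac{|K|^2}{|C_K(g_1^b)|^2}|C_G(\overline{g}_1^b)|$ via the class equation shows that every representation on the right-hand side of the bijection must already arise from a simple $D^{\omega}(K,A)$-module, so no direct verification of sufficiency is ever performed.

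Your route replaces that counting argument with a direct check, and that is where the gap sits. After evaluating on the basis you correctly reduce the converse to the purely cohomological claim that $\Lambda_j(a):=c(a,g_j^b)\,\theta_{g_j^b}(x_j^b,a)\,\theta_{g_j^b}(a,x_j^b)^{-1}$ is independent of $j$ and equals $c(a,g_1^b)$, together with the companion identity $c(ag_0,x)=\theta_x(a,g_0)^{-1}c(a,x)\,c(g_0,x)$ used in your reduction to $g_0=\Id$. You assert these ``should follow'' from the quasi-abelian cocycle identities but do not derive them, and they are not immediate: applying \eqref{eqn:qa3c_braiding_1} with $(x,y,z)=(a,g_0,x)$ produces $c(a,g_0xg_0^{-1})$ rather than $c(a,x)$, together with a ratio of three $\omega$'s that must then be matched against the specific $\theta$ of the cleft object (which in this paper carries op/rev twists), so the $j$-independence is precisely the conjugation-compatibility of $c(a,-)$ that still has to be proved. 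Since that identity carries the entire content of the ``if'' direction in your approach, the proof is incomplete as written; either carry out that cocycle computation in full, or substitute the paper's dimension count, which is designed exactly to avoid it.
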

\begin{proof}
First, we show there is a natural bijection between the sets:
\begin{gather*}\left\{ \rho_v \in \mathrm{Irr.Rep}(\mathbb{C}_{\theta_{g_1^b}}[C_K(g_1^b)]) : \rho(a)=c(a,g_1^b)\Id_V \ \ \forall a\in A\right\}\leftrightarrow \mathrm{Irr.Rep}(\mathbb{C}_{\overline{\theta}_{g_1^b}}[C_G(\overline{g}_1^b)])\end{gather*}

Suppose that we have $\rho\in \mathrm{Irr.Rep}(\mathbb{C}_{\theta_{g_1^b}}[C_K(g_1^b)])$ with the prescribed condition. Then observe that $C_G(\overline{g}_1^b)=\overline{C_K(g_1^b)}$, $\frac{1}{c(-,g_1^b)}\rho|_A=\Id$, and 
\[\hspace{-1cm}\frac{1}{c(x,g_1^b)c(y,g_1^b)}\rho(x)\rho(y)=\frac{\theta_{g_1^b}(x,y)}{c(x,g_1^b)c(y,g_1^b)}\rho(xy)=\frac{\theta_{g_1^b}(x,y)c(xy,g_1^b)}{c(x,g_1^b)c(y,g_1^b)}\rho(xy)=\overline{\theta}_{g_1^b}(\partial(x),\partial(y))\rho(xy)\] Here the last equality holds, since $D^{\omega}(K,A)$ is a quotient of $D^{\omega}(K)$, and $x,y\in C_K(g_1^b)$. We see that indeed there will exist a $\tilde{\rho}\in \mathrm{Irr.Rep}(\mathbb{C}_{\overline{\theta}_{g_1^b}}[C_G(\overline{g}_1^b)])$ such that $\tilde{\rho}\circ \partial =\frac{1}{c(-,g_1^b)}\rho$. Conversely, if $\tilde{\rho}:\C_{\overline{\theta}_{g_1^b}}[C_G(\overline{g}_1^b)]
\rightarrow \mathrm{End}_{\C}(V)$ is an irreducible representation, then $\rho(x):=c(x,g_1^b)\tilde{\rho}(\partial(x))$ is a 
irreducible representation of $\mathbb{C}_{\theta_{g_1^b}}[C_K(g_1^b)]$ such that $A\subseteq \ker(\rho)$.  

As outlined above, the irreducible modules of $D^{\omega}(K)$ are determined by a conjugacy class representative  $g_1^b$, and an irreducible projective representation  $\rho$ of its centralizer. To prove the Theorem, we will use the bijection just obtained and a counting argument.

First, note that $(g_1^b,\rho_v)\in \mathrm{\textbf{Rep}}\,\, D^{\omega}(K,A)$ if and only if $\ker(\pi)\subseteq \ker(\rho_V)$. Supposing this is the case, by Lemma \ref{kernel_description} we have $B_{g_1^b,\Id_K}\subseteq \ker(\rho_V)$. Let $x,y\in A$. Then:
 \[\rho_V(c(x,g_1^b)^{-1}\delta_{g_1^b} x)=\rho_V(c(y,g_1^b)^{-1}\delta_{g_1^b} y)\]
Observe that since $A$ is a central subgroup of $K$ we have $xx_1^b=x_1^bx, yx_1^b=x_1^by$ for all $1\leq b\leq m$, and $x_1^b=\Id_K$. Combining these observations we can expand the above action using \eqref{eqn:basis_action} to obtain:
\[
\rho_V(c(x,g_1^b)^{-1}\delta_{g_1^b} x)((\delta_{\Id} x_1^b)\otimes v_j)=c(x,g_1^b)^{-1}\theta_{g_1^b}(x,x_1^b)\theta_{g_1^b}(x_1^b,x)^{-1}(\delta_{\Id_K} x_1^b)\otimes \rho_V(x)(v_j)\]
 \[\rho_V(c(g_1^b,y)^{-1}\delta_{g_1^b} y)((\delta_{\Id} x_1^b)\otimes v_j)=c(y,g_1^b)^{-1}\theta_{g_1^b}(y,x_1^b)\theta_{g_1^b}(x_1^b,y)^{-1}(\delta_{\Id_K} x_1^b)\otimes \rho_V(y)(v_j)\]
 But, since $x_1^b=1$, we see that 
 \[\theta_{g_1^b}(y,x_1^b)\theta_{g_1^b}(x_1^b,y)^{-1}=1\Rightarrow c(x,g_1^b)^{-1}\rho_V(x)=c(y,g_1^b)^{-1}\rho_V(y)
\] In particular, by setting $y=\Id_K$ we have :
\[c(x,g_1^b)^{-1}\rho_V(x)=\rho(\Id_K)=\Id_V\Rightarrow \rho_V(x)=c(x,g_1^b)\Id_V \ \ \ \ \forall x\in A\]

We see then that every simple $(g_1^b,\rho_V)\in \mathrm{\textbf{Rep}}\,\, D^{\omega}(K,A)$ corresponds to an irreducible representation of $\mathbb{C}_{\overline{\theta}_{g_1^b}}[C_G(\overline{g}_1^b)]$. We claim that it is surjective. 

Denote the simple modules of $D^{\omega}(K,A)$ by $\mathcal{O}$.
 Since $\mathrm{\textbf{Rep}}\,\, D^{\omega}(K,A)$ is integral we have that $\dim_{\C}(D^{\omega}(K,A))=|K|\cdot|G|=\sum_{V\in \mathcal{O}}\dim_{\C}(V)^2$. By looking at 
the projective analogue of the regular representation for $\C_{\overline{\theta}_{g_1^b}}[C_G(\overline{g}_1^b)]$ we know that \cite[Proposition 2.3]{Cheng}: 
\[|C_G(\overline{g}_1^b)|=\sum_{W\in \mathrm{Irr}(\C_{\overline{\theta}_{g_1^b}}[C_G(\overline{g}_1^b)])}\dim_{\C}(W)^2\]
Suppose there exists a $1\leq b_0\leq r$ and a $V\in \mathrm{Irr}(\C_{\overline{\theta}_{g_1^{b_0}}}[C_G(\overline{g}_1^{b_0})])$ that does not correspond to a simple object of $\mathrm{\textbf{Rep}}\,\, D^{\omega}(K,A)$. Note that the dimension of $(g_1^b,\rho_V)$ is $\frac{|K|}{|C_K(g_1^b)|}\dim(V_{\rho})$, but by assumption this implies that:
\begin{equation}|G|\cdot |K|<\sum_{b=1}^{r}\frac{|K|^2}{|C_K(g_1^{b})|^2}\sum_{W\in \mathrm{Irr}(\C_{\overline{\theta}_{g_1^{b}}}[C_G(\overline{g_1}^{b})]) }
\dim(W)^2=\sum_{b=1}^{r}\frac{|K|^2}{|C_K(g_1^{b})|^2}|C_G(\overline{g}_1^{b})|= \end{equation}
\[\sum_{b=1}^{r}\frac{|K|^2}{|C_K(g_1^{b})|^2}\frac{|C_K(g_1^{b})|}{|A|}=\frac{|K|^2}{|A|}\sum_{b=1}^{r}\frac{1}{|C_K(g_1^{b})|}=\frac{|K|^2}{|A|}\cdot 1 = |G|\cdot |K|\]
This is a contradiction, which proves the correspondence is surjective, implying the theorem.

 \end{proof}
 Notice that since $G$ is a central quotient of $K$, the set of representatives of the action of $G$ on $K$ is just given by $\{g_1^b\}_{b=1}^r$. In other words, it is really just the conjugation action of $G$. With this in mind note (compare with \cite{Naidu}):
 \begin{corollary} Up to equivalence,
 the simple objects of $\mathbf{Rep}\,\, D^{\omega}(K,A)$ are parametrized by the irreducible projective representations of $\C_{\overline{\theta}_{g_1^b}}[C_G(g_1^b)]$ for each $1\leq b\leq r$.
 \end{corollary}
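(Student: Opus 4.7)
The plan is to read the corollary off immediately from the preceding theorem, combined with a bijection already constructed inside its proof. The theorem classifies the simple objects of $\mathbf{Rep}\,\,D^{\omega}(K,A)$ as pairs $(g_1^b,\rho_V)$, where $g_1^b$ ranges over conjugacy class representatives of $K$ and $\rho_V$ is an irreducible projective representation of $\C_{\theta_{g_1^b}}[C_K(g_1^b)]$ subject to the $A$-constraint $\rho_V(a)=c(a,g_1^b)\,\Id_V$ for all $a\in A$. So it only remains to repackage those constrained representations of the $K$-centralizer as projective representations of the $G$-centralizer.

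To do this I would invoke the first paragraph of the theorem's proof. Since $A$ lies in the centre of $K$, we have the identification $C_G(\overline{g}_1^b)=C_K(g_1^b)/A$. Given a constrained $\rho_V$ as above, the prescription $\tilde\rho(\overline{x}):=c(x,g_1^b)^{-1}\rho_V(x)$ is constant on $A$-cosets (because $\rho_V|_A=c(-,g_1^b)\,\Id_V$) and hence descends to a well-defined projective representation of $C_G(\overline{g}_1^b)$. The computation in the proof of the theorem shows the resulting 2-cocycle is precisely $\overline{\theta}_{g_1^b}$, so $\tilde\rho\in\mathrm{Irr.Rep}(\C_{\overline{\theta}_{g_1^b}}[C_G(\overline{g}_1^b)])$; irreducibility transfers because the map $\rho_V\leftrightarrow \tilde\rho$ preserves invariant subspaces. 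Conversely, any $\tilde\rho\in\mathrm{Irr.Rep}(\C_{\overline{\theta}_{g_1^b}}[C_G(\overline{g}_1^b)])$ lifts to $\rho_V(x):=c(x,g_1^b)\,\tilde\rho(\partial(x))$, which is irreducible and manifestly satisfies the $A$-constraint. Concatenating this bijection with the classification in the preceding theorem yields the parametrization claimed.

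There is no real obstacle here: the corollary is essentially a repackaging of the content of the theorem in language where the group $G=K/A$ is foregrounded, and both directions of the bijection have already been written down inside that proof. The only points worth explicitly checking in a write-up are the centralizer identification $C_G(\overline{g}_1^b)=C_K(g_1^b)/A$ (immediate from centrality of $A$) and the confirmation that the induced 2-cocycle on $C_G(\overline{g}_1^b)$ is indeed $\overline{\theta}_{g_1^b}$, which the preceding proof already carried out when it verified $\frac{1}{c(x,g_1^b)c(y,g_1^b)}\rho_V(x)\rho_V(y)=\overline{\theta}_{g_1^b}(\partial(x),\partial(y))\,\rho_V(xy)$.
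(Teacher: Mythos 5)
Your proposal is correct and matches the paper's intended derivation: the corollary is obtained by combining the classification in the preceding theorem with the bijection between constrained irreducible projective representations of $\C_{\theta_{g_1^b}}[C_K(g_1^b)]$ and irreducible projective representations of $\C_{\overline{\theta}_{g_1^b}}[C_G(\overline{g}_1^b)]$ already established in the first part of that theorem's proof. The paper treats this as immediate (prefacing the corollary only with the remark that the $G$-action on $K$ is conjugation with the same representatives $\{g_1^b\}$), so your write-up is, if anything, more explicit than the original.
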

We already obtained this classification in Section 4.4. The advantage of the approach here is that we can construct all irreducible $D^{\omega}(K,A)$ modules. Namely, if $\rho_V$ is as above, then $\tilde{\rho}_V(\delta_g\overline{x}):=\frac{1}{c(x,g)}\rho_V(\delta_gx)$. Explicitly, 
  \begin{equation}\tilde{\rho}_V(\delta_g\overline{x})((\delta_{\Id_K} x_j^b)\otimes v_i)=\frac{1}{c(x,g)}\frac{\theta_g(x,x_j^b)}{\theta_g(x_k^b,h)}\delta_{g,xg_j^bx^{-1}}(
\delta_{\Id_K}x_k^b)\otimes \rho_V(h)(v_i)\end{equation}
Here again $x_j^b,x_k^b, h$ are all determined by $xx_j^b=x_k^bh, h\in C_K(g_1^b)$.

\newcommand\biba[7]   {\bibitem{#1} {#2:} {\sl #3.} {\rm #4} {\bf #5,}
                    {#6 } {#7}}
                    \newcommand\bibx[4]   {\bibitem{#1} {#2:} {\sl #3} {\rm #4}}
\vspace{0.2cm}\addtolength{\baselineskip}{-2pt}
\begin{footnotesize}
\noindent{\it Acknowledgement.} We thank Simon Lentner and Sven M\"oller for discussions. The research of TG was supported in part by an   NSERC Discovery Grant. This material is based upon work supported by the National Science Foundation under Grant No. DMS-1928930, while TG was in residence at the Mathematical Sciences Research Institute in Berkeley, California, during the Quantum Symmetries Reunion of 2024. AR was supported in part by an NSERC CGS M.

\end{footnotesize}

\end{document}